\definecolor{myred}{rgb}{0.2,0,0}
\definecolor{myblue}{rgb}{0,0,0.6}
\definecolor{mygreen}{rgb}{0,0.2,0}
\newcommand{\R}{\mathbb{R}}
\newcommand{\C}{\mathbb{C}}
\newcommand{\N}{\mathbb{N}}
\newcommand{\Z}{\mathbb{Z}}
\newcommand{\T}{\mathbb{T}}
\newcommand{\norm}[1]{\left\lVert#1\right\rVert}
\newcommand{\les}{\leqslant}
\newcommand{\ges}{\geqslant}
\newcommand{\floor}[1]{\left\lfloor #1 \right\rfloor}
\newtheorem*{conjecture*}{Conjecture}
\newtheorem{theorem}{Theorem}[section]
\newtheorem*{theorem*}{Theorem}
\newtheorem{lemma}{Lemma}[section]
\newtheorem{proposition}{Proposition}[section]
\newtheorem{remark}{Remark}[section]
\tikzset{
	state/.style={draw,ellipse}
}
\numberwithin{equation}{section}
\begin{document}
\title[M\"{o}bius orthogonality of Thue-Morse sequence]{M\"{o}bius orthogonality of Thue-Morse sequence along Piatetski-Shapiro numbers} 
\author{Andrei Shubin}
\address{Institute of Discrete Mathematics and Geometry, TU Wien, Wiedner Hauptstr. 8-10, A-1040 Wien, Austria}
\email{\href{mailto:andrei.shubin@tuwien.ac.at}{andrei.shubin@tuwien.ac.at}}

\maketitle

\begin{abstract}
	We show that the M\"{o}bius function is orthogonal to the Thue-Morse sequence $t(n)$ taken along the Piatetski-Shapiro numbers $\floor{n^c}$ for any $1 < c < 2$. Previously this property was established for the subsequence along the squares $t(n^2)$. These are both examples of M\"{o}bius orthogonal sequences with maximum entropy. 
\end{abstract}

\section{Introduction}
\label{Intro}

The Thue-Morse sequence can be defined as the sum of digits modulo 2 in the binary representation of $n$. For example, 
$$
	t(1) = 1, \qquad t(2) = t(10_2) = 1, \qquad t(3) = t(11_2) = 0, \qquad t(4) = t(100_2) = 1,
$$ and so forth. This sequence can also be viewed as an output of a \textit{determenistic finite automation} with two states, and the input and output alphabets consisting of letters $0$ and $1$:

\begin{center}
\begin{tikzpicture}[->,>=stealth',shorten >=1pt,auto,node distance=2.8cm, semithick, bend angle = 15, cross line/.style={preaction={draw=white,-,line width=4pt}}, scale=1.25, transform shape, squarednode/.style={rectangle, draw=black!60, thick, minimum size=5mm}]

\node[squarednode](I)		{input $1011100\ldots$};
\begin{scope}[node distance=3.5cm]
\node[state](A)     [right of=I]              {$q_0 / 0$};
\end{scope}
\node[state]         (B) [right of=A] 			{$q_1 / 1$};

\path [every node/.style={font=\footnotesize}, pos = 0.66]
(I) edge 		  node 		 {}      (A)
(A) edge 	[bend left]	  node 		 {1}      (B)
edge [loop above] node [pos=0.5] {0} 	(A)
(B) edge 	[bend left]	  node 		 {1}      (A)
edge [loop above] node 		[pos=0.5] 	{0}	(B);
\end{tikzpicture}
\end{center}


Therefore the Thue-Morse sequence is an example of a \textit{2-automatic sequence}. A detailed theory of automatic sequences can be found in the book of Allouche and Shallit~\cite{All_Shall_book}. For an overview of the results about the Thue-Morse sequence see also~\cite{All_Shall_TM}.

An analogue of a prime number theorem (PNT) for the Thue-Morse sequence was established in the work by Maudit and Rivat~\cite{MR2010}:
$$
	\sum_{n \les N} \Lambda(n) t(n) = \frac{N}{2} + o( N ) \qquad N \to +\infty.
$$ Their result is in fact more general as it implies the equidistribution of sums of digits of primes in arbitrary base among arithmetic progressions. In particular, it gave a solution for one of three open problems stated by Gelfond in~\cite{Gelfond68}. The M\"{o}bius orthogonality for the Thue-Morse sequence, namely the estimate
\begin{equation} \label{Sarnak}
	\sum_{n \les N} \mu(n) t(n) = o(N),
\end{equation} which is a weaker statement, was already known from earlier works of Indlekofer and Katai~\cite{Indle_Kat, Katai86}, and Dartyge and Tenenbaum~\cite{Dart_Tanen}, but it also follows from the work of Maudit and Rivat.

The M\"{o}bius randomness principle states that any ``reasonable'' bouned function $f(n)$ in place of $t(n)$ should satisfy a relation similar to~\eqref{Sarnak}. This principle is described by the famous \textit{Sarnak conjecture}. More precisely, it says that for any continuous $f : X \to \R$ and $x \in X$, one has
$$
	\sum_{n \les N} \mu(n) f(T^n x) = o(N),
$$ where  $(X, T)$ is a dynamical system with zero topological entropy.

The Sarnak Conjecture for all automatic sequences was verified by M\"ullner in~\cite{Mullner}, where he also proved a PNT for sequences generated by strongly connected automata. For other instances, when the Sarnak Conjecture and/or a PNT was verified see~\cite{Ferenczi, Kanig_Lem_Radz, DMS18, DMS_126, Kul_Leman}.

In the case of automatic sequences the topological entropy is related to the \textit{subword complexity} of the sequence. For a given sequence $u(n)$ and a fixed number $H > 0$ the subword complexity is the number of different patterns of length $H$:
$$
	\# \bigl\{ \bigl( u(n), u(n+1), \ldots, u(n + H -1) \bigr) \ \ \forall n \in \N \bigr\}.
$$ It is known that the subword complexity of any automatic sequence is bounded by a linear function of $H$. In particular, for the Thue-Morse sequence the upper limit is known to be $(10/3) H$ (see~\cite{Srecko, de_Luca_Var, Avg}). In contrast, the subsequence $t(n^2)$ is \textit{normal}, which means that each of $2^H$ patterns occurs as a subword with an asymptotic density $2^{-H}$ (see~\cite{DMR-squares}). The same property holds for Piatetski-Shapiro subsequences $t(\floor{n^c})$ when $1 < c < 3/2$~\cite{Mull-Spieg}. This result might be extended to all exponents $1 < c < 2$ with the existing techniques.

Although the sequences $t(n^2)$ and $t(\floor{n^c})$ are much more ``randomized'' compared to the original Thue-Morse sequence, one can still show M\"{o}bius orthogonality for them, albeit with slightly different approaches. For squares this was established in~\cite{Sarnak_squares}. In this work we cover the Piatetski-Shapiro case:
\begin{theorem} \label{thm1}
	Let $c$ be a fixed real number such that $1 < c < 2$ and $t(n)$ the Thue-Morse sequence. Then
	$$
		\sum_{n \les N} \mu(n) t(\floor{n^c}) = o(N) \qquad \text{when } N \to +\infty.
	$$
\end{theorem}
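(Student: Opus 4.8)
The plan is to combine the circle-method machinery for Piatetski-Shapiro sequences with the Katai--Bourgain--Sarnak--Ziegler (KBSZ) orthogonality criterion, reducing the problem to bilinear estimates for exponential sums twisted by the Thue-Morse carry recursion. By the KBSZ criterion, to prove $\sum_{n\les N}\mu(n)t(\floor{n^c})=o(N)$ it suffices to show that for all distinct primes $p\ne q$ (in a suitable range),
\[
	\sum_{n\les N} t(\floor{(pn)^c})\, \overline{t(\floor{(qn)^c})} = o(N).
\]
To access $t(\floor{m^c})$ analytically, I would use the standard Fourier expansion of $t$ over short exponential sums: writing $t(m)=\tfrac12\bigl(1-e(\,\tfrac12 s_2(m)\,)\bigr)$ where $s_2$ is the binary sum of digits, one reduces to controlling $\sum_n e\bigl(\tfrac12 s_2(\floor{(pn)^c}) - \tfrac12 s_2(\floor{(qn)^c})\bigr)$, and more generally to the type-I/type-II sums that arise. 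The key technical input is the van der Corput / exponential-sum estimate of Mauduit--Rivat for $\sum_{m} e(\alpha s_2(m))$ on short intervals and along polynomial-type arguments, adapted (as in Mauduit--Rivat and in the work on $t(n^2)$) to the setting where $m$ runs over $\floor{n^c}$ via the truncated-Fourier (Vaaler) approximation to $\floor{x}$.

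The key steps, in order, are as follows. First, apply the Heath-Brown / Vaughan identity to $\mu(n)$ to decompose $\sum_n \mu(n) t(\floor{n^c})$ into $O(\log^C N)$ bilinear (type-II) and linear (type-I) sums with smooth weights. Second, for each such piece, replace $t(\floor{n^c})$ by its exponential representation and insert the Vaaler approximation $\floor{x} = x - 1/2 + O\bigl(\text{small Fourier tail}\bigr)$, so that $s_2(\floor{m^c})$ gets linearized into a phase of the shape $\beta m^c$ plus a Thue-Morse digit phase; the Piatetski-Shapiro exponent $c<2$ is exactly what makes the error terms in the Vaaler step and in the subsequent van der Corput differencing negligible. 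Third, estimate the type-I sums directly by summation over the inner variable using the Mauduit--Rivat bound for $\sum_{m\in I} e(\alpha s_2(m))$ on intervals $I$ of length comparable to a power of $N$, combined with the equidistribution of $n^c$ modulo $1$. Fourth, estimate the type-II sums by Cauchy--Schwarz in the outer variable followed by van der Corput's method (the double large sieve, or $A$-process/$B$-process), reducing to sums of the form $\sum_{m} e\bigl(\alpha(s_2(\floor{(m+h)^c})-s_2(\floor{m^c})) + \text{linear phase}\bigr)$ over short ranges of $h$; here the carry propagation in the digit function is handled by the Mauduit--Rivat truncated-digit-function technology (restricting to the lowest $\lambda$ binary digits and bounding the carry error), and the bound $c<2$ guarantees the resulting minor-arc exponential sum in the linear phase is genuinely saving a power of $N$.

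I expect the main obstacle to be Step four: getting a \emph{power-saving}, or at least $o(1)$-saving, estimate for the bilinear sum where both the Thue-Morse digit phase and the Piatetski-Shapiro phase $\beta m^c$ are present simultaneously. The difficulty is that the digit function $s_2(\floor{m^c})$ is not multiplicative and does not interact cleanly with the dilation $m\mapsto m+h$ underlying van der Corput; one must truncate to low-order digits, control the resulting carries uniformly in $h$, and then still win on the smooth phase $\beta((m+h)^c-m^c)\approx \beta c h\, m^{c-1}$, whose derivative is of controlled size precisely because $c-1\in(0,1)$. Balancing the truncation level $\lambda$ (which governs the carry error, of size roughly $2^{-\lambda}$ per term) against the van der Corput savings on the short-interval exponential sum, and making both exceed the $\log^C N$ loss from the combinatorial decomposition, is the crux; this is where the restriction $c<2$ is essential and where the argument for $t(\floor{n^c})$ diverges technically from the $t(n^2)$ case treated in~\cite{Sarnak_squares}.
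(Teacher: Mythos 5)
Your opening move is the right one: the paper also reduces via a DDKBSZ-type criterion (Proposition~\ref{prop1}) to bounding $\sum_{n\les N} t(\floor{(pn)^c})\,t(\floor{(qn)^c})$ for prime pairs $p,q$. But from there the proposal goes off course in two ways, one structural and one substantive.

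Structurally, your Step~one ("apply the Heath-Brown/Vaughan identity to $\mu(n)$") is incompatible with what preceded it: after the KBSZ reduction there is no $\mu(n)$ left to decompose, so the type-I/type-II machinery you set up has nothing to act on. More importantly, the paper explicitly flags (in the sketch of proof) that a Vaughan-type decomposition would require controlling a type-II sum containing $t(\floor{(mn)^c})$, which is \emph{not} accessible by the current exponential-sum technology; the whole point of reaching for KBSZ is to trade that hopeless object for correlations $t(\floor{(pn)^c})\,t(\floor{(qn)^c})$ with $p,q$ small primes. Keeping both reductions in the plan suggests you haven't fully internalized why KBSZ is chosen over Vaughan here.

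Substantively, the mechanism you describe — Vaaler-approximate $\floor{\cdot}$, linearize $s_2(\floor{m^c})$ into a phase $\beta m^c$, then van der Corput on that smooth phase — is not how the carry structure is actually handled. The paper instead replaces $(pn)^c$ by a genuine Beatty linearization $\alpha n+\beta$ on short blocks (Lemma~\ref{Spieg_lemma}), truncates $s$ to $s_\rho$ via the carry propagation lemma and iterated generalized van der Corput, and then transfers the resulting averages to a continuous Gowers-type integral via Koksma--Hlawka and discrepancy estimates. The endgame is the exponential decay of the $U^m$-Gowers norm of the truncated digit phase (Konieczny/Spiegelhofer, Lemmas~\ref{Koniec} and~\ref{Gowers_norm_estimate}); your proposal never identifies any quantitative source of saving comparable to this. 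Finally, and crucially, you do not address how the two digit phases $s(\floor{(pn)^c})$ and $s(\floor{(qn)^c})$ are decoupled. They are both evaluated along the same $n$, so their joint equidistribution is not automatic; the paper needs the two-dimensional discrepancy of $(\alpha n/2^\rho,\gamma\alpha n/2^\rho)$ with $\gamma=(q/p)^c$, which is small only under the Diophantine condition~\eqref{restrict_irrat}. This is precisely what forces the exceptional-pair count in Section~\ref{pf_thm1} and is the reason the criterion is applied in its averaged form over dyadic blocks of primes. Without some version of this decoupling step the argument has no way to get the factor $1/4$ (equivalently, cancellation) in the correlation, and the proof does not close.
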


Since $t(\floor{n^c})$ for $1 < c < 3/2$ is a normal sequence it has maximum entropy. Hence, Theorem~\ref{thm1} provides further examples for M\"{o}bius orthogonal sequences having this property, see also~\cite{Sarnak_squares}. 

We should also mention that there are automatic sequences whose behavior is significantly different from the Thue-Morse one, namely \textit{synchronizing} automatic sequences $a(n)$. They are generated by an automaton with a \textit{synchronizing word}, which means that the automaton returns to a given state every time a given synchronizing subword appears in the input.
In fact, this is a large class of sequences as it is known from the work of Berlinkov~\cite{Berlinkov} that almost all automata are synchronizing. Synchronizing automatic sequences are well approximated by periodic ones. This allows in particular to get a PNT for $a(\floor{n^c})$ by exploring the distribution of $\floor{p^c}$ in arithmetic progressions, which can be done by a classical approach of Vinogradov and Vaughan. On the other hand, the subword complexity of $a(\floor{n^c})$ is subexponential, which means they are much less ``randomized'' compared to $t(\floor{n^c})$. The questions on PNT and subword complexity for synchronizing automatic sequences will be discussed in detail in our forthcoming work~\cite{DDMSS_future}.

\subsection{Sketch of the proof}

The proof of Theorem~\ref{thm1} is based on the techniques developed recently by Spiegelhofer for the level of distribution of \textit{Beatty} subsequences $t(\floor{\alpha n + \beta})$~\cite{Spieg} and by Drmota-M\"{u}llner-Spiegelhofer for primes along the sums of Fibonacci numbers~\cite{DMS_126}.

The Vinogradov-Vaughan approach for a PNT does not seem to work in this case as it requires a non-trivial estimate for a type II sum with a factor of the form $t(\floor{(mn)^c})$. In the most critical range,
$$
	\sum_{m \sim \sqrt{N}} \sum_{n \sim \sqrt{N}} v_m w_n t(\floor{mn}^c),
$$ this is apparently beyond the limitation of the approaches from~\cite{Spieg} and~\cite{DMS_126}.

M\"{o}bius orthogonality is an easier problem as there are more tools for decomposing the original sum with $\mu(n)$ compared to the case of $\Lambda(n)$. This relies on the fact that most numbers are composite and have small prime factors. To get rid of the factor $\mu(n)$ we apply a variation of a theorem of Daboussi, also known as the Daboussi-Delange-Katai-Bourgain-Sarnak-Ziegler (DDKBSZ) criterion (see~\cite{Daboussi, Dab-Del, Katai86, BSZ13}. It reduces the problem to showing that for a typical pair of primes $p$ and $q$ not exceeding $N^{\theta}$ for some small $\theta > 0$ one has  
$$
	\sum_{n \les N} t(\floor{(pn)^c}) t(\floor{(qn)^c}) = o(N).
$$ This approach allows one to avoid estimating sums with additional large factors inside the argument. We will need the following averaged version of the Daboussi's theorem:
\begin{proposition}[modification of DDKBSZ] \label{prop1}
	Let $f: \N \to \C$ be a bounded function, $\theta > 0$ be a fixed small number, $N > 0$ be a large number, $\mathbb{P}$ be a set of primes, and $\mathbb{P}_k = \mathbb{P}_k (N, \theta) = \mathbb{P} \cap (2^k, 2^{k+1}] \cap [1, N^{\theta}]$. Further, assume that the relation
	\begin{equation} \label{Daboussi_eq}
		\sum_{p, q \in \mathbb{P}_k} \sum_{n \les \min(N/p, N/q)} f(pn) \overline{f(qn)} = o\left( \frac{N |\mathbb{P}_k|^2 }{2^k} \right)
	\end{equation} holds uniformly for all $k$ with $N^{\theta/2} \les 2^k \les N^{\theta}$. Then
	$$
		\sum_{n \les N} \mu(n) f(n) = o(N).
	$$
\end{proposition}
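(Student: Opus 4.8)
The plan is to run the Kátai–Bourgain–Sarnak–Ziegler argument, exploiting that $\mu$ is multiplicative with $\mu(p)=-1$ at every prime; we may assume $|f|\le 1$. Fix a set of primes $\mathbb{P}'\subseteq\mathbb{P}$ lying in $(N^{\theta/2},N^{\theta}]$, put $\omega(n)=\#\{p\in\mathbb{P}'\colon p\mid n\}$ and $L=\sum_{p\in\mathbb{P}'}1/p$, and first study the weighted sum
$$
\sum_{n\le N}\mu(n)f(n)\,\omega(n)=\sum_{p\in\mathbb{P}'}\ \sum_{m\le N/p}\mu(pm)f(pm).
$$
Since $\mu(pm)=-\mu(m)$ when $p\nmid m$ and $\mu(pm)=0$ when $p\mid m$, and dropping the constraint $p\nmid m$ costs only $O\!\bigl(\sum_{p}N/p^{2}\bigr)=o(NL)$, this equals $-\sum_{p\in\mathbb{P}'}\sum_{m\le N/p}\mu(m)f(pm)+o(NL)$.

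Next I would organise $\mathbb{P}'$ into the dyadic blocks $\mathbb{P}_k$ and treat each block separately. Swapping summations,
$$
\sum_{p\in\mathbb{P}_k}\ \sum_{m\le N/p}\mu(m)f(pm)=\sum_{m}\mu(m)\,c_m,\qquad c_m=\!\!\sum_{\substack{p\in\mathbb{P}_k\\ p\le N/m}}\!\!f(pm),
$$
where $c_m$ vanishes for $m\gg N/2^{k}$. Applying Cauchy–Schwarz in $m$ — discarding the sign of $\mu(m)$ but keeping $\sum_{m\ll N/2^{k}}\mu(m)^{2}\ll N/2^{k}$ — and then expanding $|c_m|^{2}$ and interchanging the order of summation, one obtains precisely
$$
\sum_{m}|c_m|^{2}=\sum_{p,q\in\mathbb{P}_k}\ \sum_{n\le\min(N/p,N/q)}f(pn)\overline{f(qn)},
$$
the left-hand side of \eqref{Daboussi_eq}; the point is that the cross-term signs are $\mu(p)\overline{\mu(q)}=(-1)(-1)=1$, so no cancellation in $\mu$ is lost and the signed correlation sum is exactly what appears. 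By hypothesis this is $o\!\bigl(N|\mathbb{P}_k|^{2}/2^{k}\bigr)$, so the $k$-th block contributes $\ll (N/2^{k})^{1/2}\bigl(o(N|\mathbb{P}_k|^{2}/2^{k})\bigr)^{1/2}=o\!\bigl(N|\mathbb{P}_k|/2^{k}\bigr)=o(NL_k)$ with $L_k=\sum_{p\in\mathbb{P}_k}1/p$. Summing over all admissible $k$ and using that \eqref{Daboussi_eq} is uniform in $k$ gives $\sum_{n\le N}\mu(n)f(n)\,\omega(n)=o(NL)$.

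It then remains to remove the weight. A Turán–Kubilius second-moment computation gives $\sum_{n\le N}(\omega(n)-L)^{2}=NL\,(1+o(1))$ — the errors from $\lfloor N/p\rfloor=N/p+O(1)$ and from $\sum 1/p^{2}$ being harmless because all primes are $\le N^{\theta}$ with $\theta$ small — so Cauchy–Schwarz together with $\sum_{n\le N}\mu(n)^{2}\ll N$ yields $\bigl|\sum_{n\le N}\mu(n)f(n)(\omega(n)-L)\bigr|\ll N\sqrt{L}$. Hence $L\,\bigl|\sum_{n\le N}\mu(n)f(n)\bigr|\le o(NL)+O(N\sqrt{L})$, i.e.
$$
\Bigl|\sum_{n\le N}\mu(n)f(n)\Bigr|\ll \frac{N}{\sqrt{L}}\,.
$$

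The main obstacle is that this last bound is genuinely of order $N$ unless $L\to\infty$: with primes confined to $(N^{\theta/2},N^{\theta}]$ one has only $L\asymp 1$ (in fact $L\to\log 2$), and moreover the integers having \emph{no} prime factor in that range carry $\omega(n)=0$ and form a positive proportion of $[1,N]$, so they contribute nothing to the weighted sum and must be disposed of separately. I would deal with both points at once: factor each squarefree $n$ as $n=ab$ with $a$ its $N^{\theta/2}$-smooth part and $b$ its $N^{\theta}$-rough part, use $\mu(n)=\mu(a)\mu(b)$, bound the purely smooth contribution by a Rankin/Dickman estimate (negligible once $\theta$ is chosen small) and the purely rough one by the rarity of $N^{\theta}$-rough integers, and iterate the whole scheme across a range of scales so that the effective value of $L$ may be taken arbitrarily large. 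Carrying out this reduction — upgrading the bound $\ll N/\sqrt{L}$ to a true $o(N)$ as $N\to\infty$ — is the delicate part; once it is in place, Proposition~\ref{prop1} follows.
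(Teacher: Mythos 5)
Your proposal is the Kátai--Daboussi--Bourgain--Sarnak--Ziegler argument, and this is indeed the paper's intended route: the paper offers no proof of Proposition~\ref{prop1} at all, saying only that it ``follows directly from the argument of Tao''. The mechanical steps you carry out are all correct: the identity $\sum_{m}|c_m|^2 = \sum_{p,q\in\mathbb{P}_k}\sum_{n\le\min(N/p,N/q)}f(pn)\overline{f(qn)}$, the Cauchy--Schwarz step yielding $o(N|\mathbb{P}_k|/2^k)=o(NL_k)$ per dyadic block, and the Tur\'an--Kubilius removal of the weight $\omega$ producing $|\sum_{n\le N}\mu(n)f(n)|\ll o(N)+N/\sqrt{L}$ are exactly right.

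The point you then raise is a genuine one, and you should not apologise for it: with $\mathbb{P}'=\mathbb{P}\cap(N^{\theta/2},N^\theta]$ one has $L=\sum_{p\in\mathbb{P}'}1/p=\log\log N^\theta-\log\log N^{\theta/2}+o(1)=\log 2+o(1)$, which is \emph{bounded}, so the argument delivers only $O(N)$, not $o(N)$. In Tao's own formulation of the criterion the primes run up to a threshold $\exp(1/\varepsilon)$ with $\varepsilon\to 0$, precisely so that $L=\sum_{p\le\exp(1/\varepsilon)}1/p\sim\log(1/\varepsilon)\to\infty$; a single dyadic band $N^{\theta/2}\le 2^k\le N^\theta$ with $\theta$ fixed does not provide this. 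So, taken literally, the hypothesis in Proposition~\ref{prop1} is too weak to conclude $o(N)$ by the K\'atai mechanism alone, and the implicit understanding must be that $\eqref{Daboussi_eq}$ is available for all sufficiently small $\theta$ simultaneously, so that one may take $\mathbb{P}'=\mathbb{P}\cap(N^{\theta/2^J},N^\theta]$ with $J\to\infty$ slowly, giving $L\asymp J\to\infty$. That is the clean version of your ``iterate across a range of scales'' fix, and it matches how the proposition is actually invoked in the paper (where $\theta$ may be taken anywhere below a threshold determined by $c$). Your Rankin/Dickman suggestion, by contrast, does not close the gap: for a \emph{fixed} $\theta$ the density of $N^{\theta/2}$-smooth integers up to $N$ is $\rho(2/\theta)$, a fixed positive constant, so the ``purely smooth'' part of the sum is $\Theta(N)$, not negligible. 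The decisive ingredient is that $\theta$ can be pushed down, and this is exactly what your argument is missing as written; once one allows a chain of scales (or, equivalently, primes over the whole range $(N^{\eta},N^\theta]$ with $\eta\to 0$) the proof you wrote goes through unchanged.
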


Proposition~\ref{prop1} follows directly from the argument of Tao~\cite{Tao2011}. When $f(n) = t(\floor{n^c})$ we will show that for most pairs $p, q \in \mathbb{P}_k (N, \theta)$, where $\theta$ will depend on $c$ in Theorem~\ref{thm1}, the inner sum in~\eqref{Daboussi_eq} is $o(N 2^{-k})$, whereas for the remaining few pairs we will apply a trivial bound $O(N 2^{-k})$.

In the first case the idea is to show that for all ``good'' pairs $(p, q)$ the quantities
\begin{equation} \label{quantities}
	\left| \frac{1}{N} \# \bigl\{ n \in (N, 2N]: t(\floor{(pn)^c}) = \alpha_1, \ t(\floor{(qn)^c}) = \alpha_2 \bigr\} - \frac{1}{4} \right|
\end{equation} are $o(1)$ for all four patterns $(\alpha_1, \alpha_2) \in \{ (0, 0), (0, 1), (1, 0), (1, 1) \}$. The reason for this is that for most pairs the number $(q/p)^c$ cannot be well approximated by rationals, and so the functions $t(\floor{(pn)^c})$ and $t(\floor{(qn)^c})$ behave ``independently'' from each other. In the latter case, one can develop a 2-dimensional analogue of Spiegelhofer's approach.

The quantities~\eqref{quantities} are closely related to a problem on the level of distribution of a Beatty subsequence $t(\floor{\alpha n + \beta})$ as $\alpha$ varies around $N^{\rho_1}$ for an appropriate exponent $\rho_1 = \rho_1 (c) > 0$. Without loss of generality we may only consider the pattern $(1, 1)$. The necessary bound is given by the following proposition:
\begin{proposition} \label{prop2}
	Let $N \ges K \ges 2$ be large numbers, $\theta > 0$, $k$, $\mathbb{P}_k (N, \theta)$ be as in Proposition~\ref{prop1}, $c > 1$ be as in Theorem~\ref{thm1}, and $p, q$ be distinct primes, such that $p, q \in P_k (N, \theta)$. Then we have
	\begin{multline} \label{prop2_ineq}
		\left| \frac{1}{N} \# \bigl\{ n \in (N, 2N]: t(\floor{(pn)^c}) = 1, \ t(\floor{(qn)^c}) = 1 \bigr\} - \frac{1}{4} \right| \ll \\
		2^{kc} N^{c-2} K^2 + \frac{(\log N)^2}{K} + J(N, K),
	\end{multline} where
	\begin{multline*}
		J(N, K) := \frac{1}{2^{kc} N^{c-1}} \int_{c 2^{kc} N^{c-1}}^{c 2^{kc} (2N)^{c-1}} \max_{\beta_1, \beta_2 \ges 0} \biggl| \frac{1}{K} \# \bigl\{ n < K:  t(\floor{\alpha n + \beta_1}) = 1, \\ 
		t(\floor{\gamma \alpha n + \beta_2}) = 1 \bigr\} - \frac{1}{4} \biggr| d\alpha,
	\end{multline*} and $\gamma := (q/p)^c$. 
\end{proposition}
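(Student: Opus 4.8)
The plan is to linearise $(pn)^c$ and $(qn)^c$ on short blocks, reducing the Piatetski–Shapiro pair to a Beatty pair $\bigl(\floor{\alpha n+\beta_1},\floor{\gamma\alpha n+\beta_2}\bigr)$, and then to recognise the block-average of the Beatty count as $J(N,K)$, the maximum over $\beta_1,\beta_2$ in $J$ being exactly what absorbs the variation of $\alpha$ inside a block. We may assume $\delta:=2^{kc}N^{c-2}K^2<1$, since otherwise the right-hand side of \eqref{prop2_ineq} exceeds $\tfrac14$. Split $(N,2N]$ into $\ell=\lceil N/K\rceil$ blocks $I_r=(n_r,n_r+K]$, $n_r:=N+rK$; the incomplete last block contributes $\ls K/N\ls\delta$ and is dropped, and on $I_r$ I write $n=n_r+j$, $1\les j\les K$. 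For $\alpha>0$ put $M(\alpha):=\max_{\beta}\#\bigl\{j\les K:\{\alpha j+\beta\}\in(1-\delta,1)\bigr\}$, the largest number of the points $\{\alpha j\}_{j\les K}$ contained in an arc of length $\delta$. Then $M$ is $1$-periodic, has $\mathrm{TV}_{[0,1)}(M)\ls K^2$, and $\int_0^1M(\alpha)\,d\alpha\ls\delta K+(\log K)^2$ by the classical $L^1$-bound $\int_0^1D^*_K(\alpha)\,d\alpha\ls(\log K)^2/K$ for the discrepancy of Kronecker sequences — this is the source of the $(\log N)^2$ term.

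\smallskip
\emph{Linearisation.} Taylor's formula gives, for $n\in I_r$, $\beta_r^{(1)}:=p^cn_r^c\ges0$, $\alpha_r:=c\,p^cn_r^{c-1}$, $\beta_r^{(2)}:=q^cn_r^c\ges0$, $\gamma\alpha_r=c\,q^cn_r^{c-1}$,
$$
(pn)^c=\beta_r^{(1)}+\alpha_r j+O(\delta),\qquad(qn)^c=\beta_r^{(2)}+\gamma\alpha_r j+O(\delta)\qquad(p\asymp2^k,\ n_r\asymp N,\ j\les K),
$$
the two error terms non-negative by convexity. As $r$ varies, $\alpha_r$ and $\gamma\alpha_r$ increase, sweeping intervals comparable to $[c\,2^{kc}N^{c-1},c\,2^{kc}(2N)^{c-1}]$ (the interval in $J(N,K)$), with consecutive gaps comparable to $\eta:=2^{kc}N^{c-2}K$ (here $1<c<2$ is used) and $\eta=\delta/K<1$. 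Consequently, for any non-negative $1$-periodic $G$, comparison of the sum over the $\alpha_r$ with its integral gives
$$
\sum_{r<\ell}G(\alpha_r)\ls\ell\int_0^1G(\alpha)\,d\alpha+\eta\cdot2^{kc}N^{c-1}\cdot\mathrm{TV}_{[0,1)}(G),
$$
and the same for $\gamma\alpha_r$; with $G=M$, $\ell\asymp N/K$ and $\eta<1$, this yields $\sum_{r<\ell}\bigl(M(\alpha_r)+M(\gamma\alpha_r)\bigr)\ls\delta N+\tfrac{(\log N)^2}{K}N$.

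\smallskip
\emph{Assembling $J(N,K)$.} On $I_r$ the two arguments of $\floor{\,\cdot\,}$ in the linearisation differ by an element of $[0,\delta)\subset[0,1)$, so $\floor{(pn)^c}$ and $\floor{\alpha_r j+\beta_r^{(1)}}$ disagree only for those $j$ with $\{\alpha_r j+\beta_r^{(1)}\}\in(1-\delta,1)$ — at most $M(\alpha_r)$ of them — and likewise for $q$; each such $j$ moves the joint count by $O(1)$ because $t$ may differ at consecutive integers. By the linearisation step this replaces $\#\{n\in(N,2N]:t(\floor{(pn)^c})=t(\floor{(qn)^c})=1\}$ by $\sum_{r}\#\{j\les K:t(\floor{\alpha_r j+\beta_r^{(1)}})=t(\floor{\gamma\alpha_r j+\beta_r^{(2)}})=1\}$ up to $O\bigl(\delta N+\tfrac{(\log N)^2}{K}N\bigr)$. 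The $r$-th summand, normalised by $K$, equals $\tfrac14+O(F(\alpha_r))$ with
$$
F(\alpha):=\max_{\beta_1,\beta_2\ges0}\Bigl|\tfrac1K\#\{n<K:t(\floor{\alpha n+\beta_1})=t(\floor{\gamma\alpha n+\beta_2})=1\}-\tfrac14\Bigr|,
$$
since $\beta_r^{(1)},\beta_r^{(2)}\ges0$ are admissible in the maximum. Finally, for $\alpha\in[\alpha_r,\alpha_{r+1}]$ we have $|\alpha-\alpha_r|\,j\ls\delta$ for $j\les K$, so replacing $\alpha_r$ by $\alpha$ inside the floors changes at most $O(M(\alpha_r)+M(\gamma\alpha_r))$ values of $j$, whence $F(\alpha_r)\ls\tfrac1\eta\int_{\alpha_r}^{\alpha_{r+1}}F(\alpha)\,d\alpha+\tfrac1K\bigl(M(\alpha_r)+M(\gamma\alpha_r)\bigr)$. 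Multiplying by $K/N$, summing over $r$, and using $\tfrac{K}{N\eta}=\tfrac{1}{2^{kc}N^{c-1}}$ and the range of the $\alpha_r$, the $F$-integrals assemble into $J(N,K)$ while the $M$-terms sum to $O\bigl(\delta+\tfrac{(\log N)^2}{K}\bigr)$ by the linearisation step; together with the symmetric lower bound this gives \eqref{prop2_ineq}.

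\smallskip
\emph{Main obstacle.} The real content is the two comparisons in the last paragraph — passing from the Piatetski–Shapiro values to their linear models, and from a fixed $\alpha_r$ to a varying $\alpha$ — carried out with no help from cancellation in $t$: the entire cost must be paid by counting exceptional $j$'s, i.e. by the size of $M$. Keeping $\sum_rM(\alpha_r)$ down to $\delta N+\tfrac{(\log N)^2}{K}N$ relies on the regularity of $r\mapsto\alpha_r$ (monotonicity and the comparability of consecutive gaps to $\eta$), which is precisely why $c$ must be held strictly inside $(1,2)$, and which also fixes the admissible size of $K$; the compensating structural fact is that the maximum over the intercepts built into $J(N,K)$ is exactly what allows the $\alpha$-variation within a block to be swallowed.
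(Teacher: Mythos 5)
Your strategy (split $(N,2N]$ into blocks of length $K$, linearize the Piatetski--Shapiro arguments on each block, absorb the Beatty block counts into $J(N,K)$ via the freedom in $\beta_1,\beta_2$) is the one the paper follows, and the error scale $\delta = 2^{kc}N^{c-2}K^2$, the role of the $L^1$ discrepancy bound, and the reason the $\max$ in $J$ is needed are all identified correctly. Where you part ways with the paper, and where there is a gap, is the handling of the $\alpha$-variation. The paper applies Spiegelhofer's Lemma~\ref{Spieg_lemma}, which controls the linearization error by $2BK^3 + KD_K(\alpha n)$ \emph{uniformly over all slopes $\alpha \in f'([a_i,a_{i+1}])$}; because the bound holds for a continuum of $\alpha$, one can average over $\alpha$ on each block immediately, and the discrepancy term is then dispatched over the whole slope range by Lemma~\ref{mean-square-discr}. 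You instead evaluate at the single slope $\alpha_r = cp^cn_r^{c-1}$ per block and then must compare the discrete sum $\sum_r M(\alpha_r)$ to an integral.

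That sum-to-integral step is where the proposal is not established. You assert $\mathrm{TV}_{[0,1)}(M)\ls K^2$ without proof, and it is not obviously right: each pair $j<j'\les K$ contributes $\asymp(j'-j)$ collision events of the Kronecker points $\{\alpha j\}$ as $\alpha$ ranges over $[0,1)$, giving only $\mathrm{TV}(M)\ls K^3$. Moreover, the Riemann-sum error over the $\asymp 2^{kc}N^{c-1}$ periods covered by the $\alpha_r$ is $\asymp 2^{kc}N^{c-1}\mathrm{TV}(M)$, not $\eta\cdot 2^{kc}N^{c-1}\mathrm{TV}(M)$ as you wrote --- the extra $\eta$ should not be there. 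With the corrected Riemann-sum error and the $K^3$ bound the total is $\ls 2^{kc}N^{c-1}K^3=\delta NK$, overshooting the allowed $\delta N$ by a factor of $K$, which would weaken the first term in \eqref{prop2_ineq} to $2^{kc}N^{c-2}K^3$. The repair is the same enlargement trick you correctly use for $F$: since $|\alpha-\alpha_r|\,j\ls\delta$ for $j\les K$, one has $M_{\delta}(\alpha_r)\les M_{3\delta}(\alpha)$ for all $\alpha\in[\alpha_r,\alpha_{r+1}]$, whence $\sum_r M_{\delta}(\alpha_r)\les\frac1\eta\int M_{3\delta}(\alpha)\,d\alpha$, and the $L^1$-discrepancy bound finishes; equivalently, bound $M(\alpha)\les\delta K+KD_K^*(\alpha)$ and integrate $D_K^*$ over the whole slope range, which is exactly how the paper sidesteps the TV question.
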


\begin{remark}
	Note that since $p$ and $q$ lie in the same dyadic interval, the number $\gamma$ is of size $1$.
\end{remark}

In order to apply Proposition~\ref{prop1}, we need to show that for most pairs $(p, q)$ the right side of~\eqref{prop2_ineq} is $o(1)$. Thus, we need to have $K \gg (\log N)^{2+\varepsilon}$, $K \ll N^{1 - c/2 - c\theta - \varepsilon}$, and $J(N, K) \ll o(1)$. That means $\theta$ has to be chosen so that the inequality $1 - c/2 - c\theta > 0$ is satisfied. Note that $\varepsilon > 0$ can be arbitrarily small with respect to $\theta$. Moreover, the condition $J(N, K) \ll o(1)$ when $c \to 2-$ corresponds to an arbitrarily large $\alpha$ with respect to $n$. That means that a 2-dimensional Beatty subsequence ( $t(\floor{\alpha n + \beta_1})$, $t(\floor{\gamma \alpha n + \beta_2})$) should have a level of distribution 1. The latter problem can be reduced to estimating an exponential sum with a sum of digits function. We denote by $s(n)$ the sum of digits of $n$ in its binary representation. Then for any integers $n_1, n_2 > 0$ one clearly has
$$
	t(n_1) t(n_2) = \frac{1}{2} \left( 1 - e\bigl( \frac{1}{2} s(n_1) \bigr) \right) \cdot \frac{1}{2} \left( 1 - e\bigl( \frac{1}{2} s(n_2) \bigr) \right).
$$ The desired bound for $J(N, K)$ would follow from the corresponding estimates for the quantities
\begin{gather*}
	\int_{c 2^{kc} N^{c-1}}^{c 2^{kc} (2N)^{c-1}} \max_{\beta_1 > 0} \biggl| \sum_{n \les K} e \left( \frac{1}{2} s(\floor{\alpha n + \beta_1}) \right) \biggr| d\alpha, \\
	\int_{c 2^{kc} N^{c-1}}^{c 2^{kc} (2N)^{c-1}} \max_{\beta_1, \beta_2 > 0} \biggl| \sum_{n \les K} e \left(\frac{1}{2} s(\floor{\alpha n + \beta_1}) + \frac{1}{2} s \left( \floor{\gamma \alpha n + \beta_2}\right) \right) \biggr| d\alpha.
\end{gather*} The necessary bound for the first expression follows from~\cite{Spieg}. For the second expression we prove the following estimate, which is a 2-dimenstional analogue of Proposition~3.2 from~\cite{Spieg}:

\begin{proposition} \label{prop3}
	Assume that $D, N > 0$ are large real numbers such that $N^{\rho_1} \les D \les N^{\rho_2}$ for some fixed $\rho_2 > \rho_1 \ges 10$, where $\rho_2, \rho_1$ depend on $c$ in Theorem~\ref{thm1}. Furthermore, let $\gamma$ be as in Proposition~\ref{prop2} satisfying also the following restriction: there is $\theta_1 > 0$ such that for any rational number $h_1 / h_2$ with $|h_2| \les N^{\theta_1}$ one has
	\begin{equation} \label{restrict_irrat}
		\left| \gamma - \frac{h_1}{h_2} \right| > \frac{1}{N^{1/2}}.
	\end{equation} Then there exist $\eta > 0, c_0 > 0$, such that
	\begin{equation} \label{expsum}
		\int_D^{2D} \max_{\beta_1, \beta_2 \ges 0} \biggl| \sum_{n \les N} e\biggl( \frac{1}{2} s(\floor{\alpha n + \beta_1}) + \frac{1}{2} s \bigl( \floor{\gamma \alpha n + \beta_2}\bigr) \biggr) \biggr| \les c_0 DN^{1-\eta}.
	\end{equation} The numbers $\theta_1, \eta, c_0$ are independent of $N$ and $D$.
\end{proposition}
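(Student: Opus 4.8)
The plan is to carry out, now for the two coupled Beatty sequences $\floor{\alpha n+\beta_1}$ and $\floor{\gamma\alpha n+\beta_2}$ at once, the one-variable argument behind Proposition~3.2 of~\cite{Spieg}, which itself rests on the carry-propagation and Fourier-analytic treatment of the binary sum-of-digits function due to Mauduit--Rivat~\cite{MR2010}, and to invoke the Diophantine hypothesis~\eqref{restrict_irrat} at exactly the one step where the two sequences could resonate. Write $\chi(m):=e\bigl(\tfrac12 s(m)\bigr)\in\{-1,+1\}$, so that the inner sum is $S(\alpha,\beta_1,\beta_2):=\sum_{n\le N}\chi(\floor{\alpha n+\beta_1})\,\chi(\floor{\gamma\alpha n+\beta_2})$ and we must bound $\int_D^{2D}\max_{\beta_1,\beta_2\ge0}|S(\alpha,\beta_1,\beta_2)|\,d\alpha$. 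First I would dispose of the continuous suprema over $\beta_1,\beta_2$: detecting the events $\floor{\alpha n+\beta_i}=k_i$ through a Beurling--Selberg/Vaaler Fourier expansion of $\mathbbm 1_{[0,1)}$ with a cutoff $H$ replaces $\max_{\beta_1,\beta_2\ge0}$ by a quantity that is uniform in $\beta_1,\beta_2$, up to an error that is negligible for $D$ in the stated range $N^{\rho_1}\le D\le N^{\rho_2}$, exactly as in~\cite{Spieg}.

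Next I would split the binary expansions of $m_n:=\floor{\alpha n+\beta_1}$ and $m_n':=\floor{\gamma\alpha n+\beta_2}$ each into a bottom block (the periodic part of $\chi$), a middle window of length $\mu$, and a top block, using the Mauduit--Rivat carry lemma to factor $\chi(m_n)=\chi_{\mathrm b}(m_n)\,\chi_{\mathrm m}(m_n)\,\chi_{\mathrm t}(m_n)$ up to an admissible error, and likewise for $m_n'$. On the relevant scale the top blocks of $m_n$ and of $m_n'$ are slowly varying in $n$, so van der Corput differencing in $n$ removes them simultaneously; the bottom blocks are handled by the joint near-equidistribution of $(m_n,m_n')$ modulo small powers of $2$ together with the identity $\sum_r\chi_{\mathrm b}(r)=0$. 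This reproduces, in a two-variable form, the corresponding steps of~\cite{Spieg} and~\cite{MR2010}.

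It then remains to treat the middle windows. Replacing $\chi_{\mathrm m}(m_n)$ and $\chi_{\mathrm m}(m_n')$ by their Fourier expansions and invoking the Gelfond-type bound for the truncated Thue--Morse function (the estimate underlying Proposition~3.2 of~\cite{Spieg}), the main term takes the bilinear shape
\begin{multline*}
\sum_{|h_1|\le H}\sum_{|h_2|\le H} c_{h_1}\,c_{h_2}'\; e\bigl(h_1\vartheta_1\beta_1+h_2\vartheta_2\beta_2\bigr)\\
\times\sum_{n\le N} e\bigl((h_1\vartheta_1+h_2\gamma\vartheta_2)\,\alpha n\bigr),
\end{multline*}
where $\vartheta_1,\vartheta_2$ are the negative powers of $2$ fixed by the positions of the windows, the $\ell^1$ masses of the coefficient sequences are $N^{o(1)}$, and the phase gain of the Gelfond estimate contributes a saving of the form $2^{-\eta_0\mu}$. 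For each frequency $\varphi:=h_1\vartheta_1+h_2\gamma\vartheta_2$ one has $\bigl|\sum_{n\le N}e(\varphi\alpha n)\bigr|\ll\min\bigl(N,\lVert\varphi\alpha\rVert^{-1}\bigr)$ (with $\lVert\cdot\rVert$ the distance to the nearest integer), and $\int_D^{2D}\min(N,\lVert\varphi\alpha\rVert^{-1})\,d\alpha\ll D\log N$ as soon as $|\varphi|$ is not abnormally small. The only dangerous pairs are the near-resonant ones, $h_1\vartheta_1\approx-h_2\gamma\vartheta_2$, i.e.\ $\gamma$ close to $-h_1\vartheta_1/(h_2\vartheta_2)$, a rational with denominator $\ll H$ since $\vartheta_1,\vartheta_2$ are comparable powers of $2$ (as $\gamma\asymp1$); choosing $H\le N^{\theta_1}$, the hypothesis~\eqref{restrict_irrat} bounds $\gamma$ away from every such rational by $N^{-1/2}$, hence forces $|\varphi|\gg N^{-1/2}\min(|\vartheta_1|,|\vartheta_2|)$ on all relevant pairs, so that $\lVert\varphi\alpha\rVert$ stays bounded away from $0$ on all but a short portion of $[D,2D]$. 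A standard mean-value (large-sieve) estimate over this $\alpha$-set, followed by summing the $N^{o(1)}$ many frequencies, then gives $\int_D^{2D}\max_{\beta_1,\beta_2\ge0}|S(\alpha,\beta_1,\beta_2)|\,d\alpha\ll D N^{1-\eta}$.

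Finally I would fix $\mu=\lfloor\delta\log_2 N\rfloor$ and $H=N^{\theta_1}$ with $\delta,\theta_1>0$ small---their admissible sizes, and therefore the permissible exponent range $[\rho_1,\rho_2]$ for $D$, depending only on $c$ through the bookkeeping of the carry lemma---then balance the carry error, the van der Corput losses, the Gelfond saving $2^{-\eta_0\mu}=N^{-\eta_0\delta}$, and the resonance term, and read off $\eta>0$ and $c_0$ independent of $N$ and $D$. The main obstacle is the combination of the near-resonant analysis with the two-variable carry bookkeeping: one must show that the pairs $(h_1,h_2)$ for which the two middle-window Thue--Morse functions would correlate---precisely when $\gamma$ is near a rational with small denominator---contribute negligibly. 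This is exactly what~\eqref{restrict_irrat} is designed to rule out, but making it quantitative while keeping every cutoff a small power of $N$, and pushing the Mauduit--Rivat carry analysis through two coupled floor functions so that the near-independence of $\chi(\floor{\alpha n+\beta_1})$ and $\chi(\floor{\gamma\alpha n+\beta_2})$ is genuinely established for the typical $\alpha$, is the delicate part.
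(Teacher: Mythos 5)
Your overall skeleton (van der Corput, Mauduit--Rivat carry propagation, isolation of a periodic truncated sum-of-digits, and use of the Diophantine hypothesis~\eqref{restrict_irrat} to kill the resonances $h_1+\gamma h_2\approx 0$) is the right one, and your resonance analysis genuinely matches how the hypothesis is used in the paper (compare the bound $|h_1+\gamma h_2|\ges N^{-1/2}$ in Section~\ref{Koksma_on_n}). But the central step of your argument does not work, for two connected reasons.

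First, you have misidentified the estimate that drives Proposition~3.2 of~\cite{Spieg}: it is not a Gelfond-type $\ell^\infty$ Fourier bound for the truncated Thue--Morse function, but a Gowers $U^m$ uniformity-norm bound for $e(s_\rho/2)$ (Proposition~3.3 of~\cite{Spieg}, Lemma~\ref{Koniec} here). This is not a cosmetic distinction. After cutting away the top digits you cannot stop after one van der Corput step: the carry errors force you to take the truncation parameter $\lambda$ so large that $2^\lambda \gg D N^{\varepsilon}$, while the periodicity argument only works once you are down to a window of length $\rho$ with $2^\rho$ a \emph{small} power of $N$. Bridging that gap requires $m$ successive applications of the generalized van der Corput inequality, each peeling off $\mu$ digits, and the surviving object is an $(m+1)$-fold (eventually $(m+4)$-fold) Gowers-type correlation of $g_\rho$, not a bilinear sum of exponentials. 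Your ``bilinear shape'' $\sum_{h_1,h_2}c_{h_1}c_{h_2}'\sum_n e((h_1\vartheta_1+h_2\gamma\vartheta_2)\alpha n)$ corresponds to a single-step Fourier expansion that the iterative structure does not collapse to.

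Second, even if you could arrive at that bilinear shape, the claim that ``the $\ell^1$ masses of the coefficient sequences are $N^{o(1)}$'' is false. With $\mu=\lfloor\delta\log_2 N\rfloor$ (which you need, so that $2^\mu$ is a genuine power of $N$), the discrete Fourier transform $\hat F$ of $e(s_\mu/2)$ on $\Z/2^\mu\Z$ satisfies $\|\hat F\|_2=1$ while $\|\hat F\|_\infty\les(\sqrt{3}/2)^\mu$; a positive proportion of the $2^\mu$ modes is of size comparable to the maximum, so $\|\hat F\|_1$ grows like a fixed positive power of $2^\mu$, i.e.\ like a positive power of $N$. Summing the $\ell^1$ mass therefore destroys the $2^{-\eta_0\mu}$ gain from the Gelfond-type $\ell^\infty$ bound, and the estimate does not close. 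This is precisely the obstruction that Spiegelhofer (following Konieczny~\cite{Kon19}) avoids by working with the Gowers norm $\|e(g_\rho/2)\|_{U^m(\T)}$ rather than the Fourier spectrum: the $2^m$-th power of that norm decays like $2^{-\eta_1\rho}$ and controls the full multilinear correlation directly (Lemmas~\ref{Koniec} and~\ref{Gowers_norm_estimate}), with no $\ell^1$ summation to pay for. Repairing your argument amounts to rebuilding exactly that part of the paper; as written, the proposal has a genuine gap at its core.
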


There are essenatially three steps in the proof of~\eqref{expsum}. 

The first idea concerns the fact that the difference $s(\floor{\alpha (n + k) + \beta}) - s(\floor{\alpha n + \beta})$ does not depend on the leading digits of $\floor{\alpha n + \beta}$ \textit{for most} values of $\alpha$ if $k$ is small compared to $n$. The shifted sums can be obtained in the standard way by a van der Corput type inequalities. Since the leading digits mostly do not affect the difference, one can replace the original function by its truncated version $s_{\lambda} (n)$, which is a sum of the last $\lambda$ digits and thus $2^{\lambda}$-periodic:
$$
	s(\floor{\alpha (n + k) + \beta}) - s(\floor{\alpha n + \beta}) = s_{\lambda}(\floor{\alpha (n + k) + \beta}) - s_{\lambda}(\floor{\alpha n + \beta}) \quad \text{for most } \alpha.
$$ This is usually referred to as \textit{carry property}. The precise version of this property is given by Lemma~\ref{propagation} (\textit{carry propagation lemma}). It was introduced in the works of Maudit and Rivat~\cite{MR2009, MR2010}, and later used in~\cite{Mull-Spieg, Spieg, DMS_126}. The important advantage of the truncated function is its periodicity. It allows one to explore the distribution of $s_{\lambda} (n)$ by looking at the fractional part of $n / 2^{\lambda}$, which can be handled by a Fourier analysis. A somewhat similar idea is used in~\cite{DMS_126} for the sums of Fibonacci numbers where one needs to study the fractional part of $\gamma n$, where $\gamma$ is the golden ratio. Going back to the Thue-Morse case, the distribution of $\{ n / 2^{\lambda} \}$ is easier to handle when $\lambda$ is small, as the corresponding subintervals of $[0, 1)$ are large. However, the error terms produced by carry propagation lemma force a large choice of $\lambda$. We truncate the funtion $s_{\lambda}$ a few more times using the techniques for ``cutting away'' of the digits from~\cite{Spieg, DMS_126}. There are limitations on how many digits one can cut at each step, and this is the reason why we apply the generalized van der Corput inequality several times. Finally, we will end up having the function $s_{\rho} (n)$, where $2^{\rho}$ is a small power of $N$.

For the second step we essentially replace the multiple sum of the form
\begin{multline*}
	\frac{1}{N} \sum_{n \les N} \frac{1}{|K_1| \ldots |K_m|} \sum_{\substack{k_1, \ldots, k_m \\ k_i \in K_i}} e\biggl( \frac{1}{2} \sum_{\varepsilon_1, \ldots, \varepsilon_m \in \{0, 1\}} \biggl( g_{\rho} \left( \frac{\alpha n}{2^{\rho}} + \frac{\alpha}{2^{\rho}} \langle \bm{\varepsilon}, \bm{k} \rangle \right) + \\
	g_{\rho} \left( \frac{\gamma \alpha n}{2^{\rho}} + \frac{\gamma \alpha}{2^{\rho}} \langle \bm{\varepsilon}, \bm{k} \rangle \right) \biggr) \biggr) 
\end{multline*} by the multiple integral of the form
\begin{equation} \label{integral_norm}
	\int_{[0, 1]} \int_{[0, 1]^m} e \biggl( \frac{1}{2} \sum_{\varepsilon_1, \ldots, \varepsilon_m \in \{0, 1\}} g_{\rho} \bigl( x + \langle \bm{\varepsilon}, \bm{x} \rangle \bigr) \biggr) dx dx_1 \ldots dx_m =: I_m (g_{\rho})
\end{equation} using Koksma type inequalities. Here $g_{\rho}(.)$ is a certain 1-periodic piecewise constant function obtained from the truncated sum of digits $s_{\rho} (.)$, $\langle \bm{\varepsilon}, \bm{x} \rangle$ denotes an inner product $\varepsilon_1 x_1 + \ldots + \varepsilon_m x_m$, and $K_i = K_i (\alpha)$ are certain subsets of $\{ 1, \ldots, N \}$ with the restrictions on the digits of $\floor{\alpha k}$ and $\floor{\gamma \alpha k}$. Recall that in our case $\gamma = (q/p)^c$. The sums over $k_i$ will appear after the application of generalized van der Corput inequality (see Lemma~\ref{generalized_Corput}). Moving to integrals is based on discrepancy estimates for $\{ \alpha n / 2^{\rho} \}$ and $\{ \gamma \alpha n / 2^{\rho} \}$, which are small for most $\alpha$. However, since we deal with both of these sequences at the same time, we need a discrepancy estimate for the joint distribution of $\{ \alpha n / 2^{\rho} \}$ and $\{ \gamma \alpha n / 2^{\rho}  \}$. This is small enough only when $\gamma$ is far enough from any rational number with a small denominator. In this case these sequences behave roughly ``independently'' from each other. The need for this independence is the reason why we exclude certain pairs of primes in Proposition~\ref{prop1}.

In the last step of the proof we use a bound for a Gowers $m$-uniformity norm of $g_{\rho}(.)$, which is known to decay exponentially fast on the number of digits. From the previous steps we find that the expression in~\eqref{expsum} is bounded roughly by $DN I_m (g_{\rho})^{-C_m}$ with some $C_m > 0$. The integral $I_m (g_{\rho})$ can be seen as the continuous version of the Gowers norm of the function $e(g_{\rho} (x)/2)$ on the torus $\T = \R / \Z$. For an arbitrary complex-valued function $f$ it can be defined as
$$
	\norm{f}_{U^m (\T)}^{2^m} := \int_{[0,1]} \int_{[0,1]^m} \prod_{\varepsilon_1, \ldots, \varepsilon_m \in \{0, 1\}} \mathcal{C}^{\varepsilon_1 + \ldots + \varepsilon_m} f\bigl( x + \langle \bm{\varepsilon}, \bm{x} \rangle \bigr) dx dx_1 \ldots dx_m,
$$ where $\mathcal{C}$ is the conjugation operator. For a more detailed theory of Gowers norms see, for example,~\cite{Gow01, Green2007, Tao2012, Host_Kra_2005, Host_Kra_2012, Kon19, BKM20}. An upper bound for the discrete Gowers norm of the truncated Thue-Morse sequence was obtained by Konieczny in~\cite{Kon19}. In his version the function is assumed to be zero for $n > 2^{\rho}$. For the periodic sum of digits function $s_{\rho}(n)$ the argument of Konieczny was adjusted by Spiegelhofer (see~\cite[Proposition~3.3]{Spieg}). He obtained an estimate of the form
$$
	\frac{1}{2^{(m+1)\rho}} \sum_{\substack{0 \les n < 2^{\rho} \\ 0 \les r_1, \ldots, r_m < 2^{\rho}}} e\biggl( \frac{1}{2} \sum_{\varepsilon_1, \ldots, \varepsilon_m \in \{0, 1\}} s_{\rho} (n + \langle \bm{\varepsilon}, \bm{r} \rangle ) \biggr) \ll 2^{-\eta \rho}
$$ with some $\eta > 0$. In Section~\ref{pf_Gowers_norm} we show that the integral norm~\eqref{integral_norm} of $g_{\rho}$ is bounded by the discrete norm of $s_{\rho}$ using a Gowers modification of Cauchy inequality~\cite[Lemma~3.8]{Gow01}. This is given by Lemma~\ref{Gowers_norm_estimate}. We then obtain
$$
	\norm{e(g_{\rho})}_{U^m (\T)}^{2^m} \ll 2^{-\tilde \eta \rho} \qquad \text{for some } \tilde \eta = \tilde \eta (m) > 0,
$$ which implies Proposition~\ref{prop3} as soon as $2^{\tilde \eta \rho}$ is large compared to any additional factors arising after the second step.

\subsection{Plan of the paper}

In Section~\ref{pf_thm1} we deduce Theorem~\ref{thm1} from Propositions~\ref{prop1}, \ref{prop2}, and \ref{prop3} by showing that the number of ``bad'' pairs of primes $(p, q)$ is small. In Section~\ref{pf_prop2} we prove Proposition~\ref{prop2}. The rest of the paper is devoted to the proof of Proposition~\ref{prop3}.

\subsection{Notation}

We use the standard notation for the real character $e(x) := e^{2 \pi i x}$, the fractional part $\{ x \} := x - \floor{x}$, and the distance to the nearest integer $\norm{x} = \min_{n \in \Z} |x - n|$. The notation $\norm{f}_{U^m}$ is used for the Gowers $m$-norm.

The relations  $f(x) \ll g(x)$ or $f(x) = O(g(x))$ mean $|f(x)| \les C g(x)$ for some fixed number $C > 0$ for all large enough $x$. The relation $f(x) \asymp g(x)$ means $f(x) \ll g(x)$ and $g(x) \ll f(x)$ at the same time. Finally, $f(x) = o(g(x))$ means $f(x) / g(x) \to 0$ as $x \to +\infty$.

We denote by $s(n)$ the sum of binary digits of $n$, and by $s_{\lambda} (n)$ the sum of the last $\lambda$ digits. The digit of $n$ in the $j$-th position from the right is denoted as $\delta_j (n)$.

The inner product $x_1 y_1 + \ldots + x_m y_m$ is denoted as $\langle \bm{x}, \bm{y} \rangle$. The dimension $m$ varies throughout the paper.

Finally, we use the standard notation for the discrepancy of the sequence $\bm{x} = \{ \bm{x}_1, \bm{x}_2, \bm{x}_3, \ldots \}$ on the unit cube $[0, 1]^k$:
$$
	D_N (\bm{x}) := \sup_{J} \left| \frac{1}{N} \# \bigl\{ i \les N : \bm{x}_i \bmod{1} \in J \bigr\} - \lambda(J) \right|,
$$ where $J$ runs through all subintervals of $[0, 1]^k$, and $\lambda(J)$ denotes its Lebesgue measure. Furthermore, we denote by $D_{N,\mathcal{A}} (\bm{x})$ the discrepancy of the subsequence of $\bm{x}$ along $\mathcal{A} \subset \N$.

\subsection{Acknowledgements} 

The work is supported by the Austrian-French project ``Arithmetic Randomness'' and the Austrian Science Fund FWF (I 4945-N). The author thanks Michael Drmota, Clemens M\"{u}llner, and Lukas Spiegelhofer for introducing him to this topic and many helpful conversations.

\section{Proof of Theorem~\ref{thm1}}
\label{pf_thm1}

To prove Theorem~\ref{thm1} it is enough to verify the assumptions of Proposition~\ref{prop1} when $f(n) = t(\floor{n^c})$. We show that most pairs of primes satisfy the restriction~\eqref{restrict_irrat} by evaluating from above the number of exceptional pairs. Then the contribution from the ``bad'' pairs can be estimated trivially as
$$
	\sum_{n \les \min(N/p, N/q)} t(\floor{(pn)^c}) t(\floor{qn}^c) \les \frac{N}{2^k},
$$ whereas the contribution from the ``good'' pairs by Propositions~\ref{prop2} and~\ref{prop3} is $o(N 2^{-k})$ uniformly in $N^{\theta/2} \les p, q \les 2 N^{\theta}$. Thus, it is enough to show that the exceptional set has an asymptotic density of zero.

We can crudely bound the number of ``bad'' pairs of primes by the number of ``bad'' pairs of integers $n, m \in (2^k, 2^{k+1}]$,
$$
	\left| \left( \frac{n}{m} \right)^c - \frac{a}{b} \right| < \frac{1}{N^{1/2 + \theta_1}} \qquad \text{for some} \quad a, b \les N^{\theta_1}.
$$

In order to evaluate the number of such integers we need the following version of Erd\"{o}s-Tur\'{a}n-Koksma inequality (see~\cite[Chapter~2]{KN74}, \cite{Koksma}, \cite{Szusz}):

\begin{lemma} \label{Erdos-Turan-Koksma}
	Let $\bm{x}_1, \ldots,  \bm{x}_N$ be a sequence of points in $\R^m$. Then for any integer $H > 0$, we have
	$$
		D_N (\bm{x}) \les C_m \biggl( \frac{1}{H} + \sum_{0 < |\bm{h}| \les H} \frac{1}{r(\bm{h})} \biggl| \frac{1}{N} \sum_{n \les N} e\bigl( \langle \bm{h}, \bm{x}_n \rangle \bigr) \biggr| \biggr),
	$$ where $|\bm{h}| = \max_{j \les m} |h_j|$,
	$$
		r(\bm{h}) = \prod_{j=1}^m \max(1, |h_j|),
	$$ and the constant $C_m$ only depends on the dimension $m$. 
\end{lemma}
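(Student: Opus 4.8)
This is the classical Erd\"{o}s--Tur\'{a}n--Koksma inequality, so the plan is just to run the standard Fourier-analytic argument found in the cited references; I sketch the route. Since the count $\#\{n\les N:\bm x_n\bmod 1\in J\}$ and each exponential $e(\langle\bm h,\bm x_n\rangle)$, $\bm h\in\Z^m$, depend on $\bm x_n$ only modulo $\Z^m$, we may assume $\bm x_1,\dots,\bm x_N\in[0,1)^m$. It then suffices to bound, uniformly over axis-parallel boxes $J=\prod_{j=1}^m[a_j,b_j)\subseteq[0,1)^m$, the quantity $\Delta(J):=\frac1N\#\{n\les N:\bm x_n\in J\}-\lambda(J)$ from above and from below by $C_m$ times $H^{-1}+\sum_{0<|\bm h|\les H}r(\bm h)^{-1}\bigl|\frac1N\sum_{n\les N}e(\langle\bm h,\bm x_n\rangle)\bigr|$, and then to take the supremum over $J$; if preferred one first reduces general boxes to anchored boxes $[0,\bm\gamma)$ at the cost of a factor $2^m$ absorbed into $C_m$.

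The main device is the one-dimensional Vaaler--Selberg majorant. Starting from the sawtooth identity $\mathbbm{1}_{[a,b)}(x)=(b-a)+\psi(x-b)-\psi(x-a)$ on $\T$, with $\psi(t)=\{t\}-\tfrac12$, and from Vaaler's estimate $|\psi(t)-V_H(t)|\les\frac{1}{2(H+1)}\mathcal F_H(t)$ --- where $V_H$ is a trigonometric polynomial of degree $\les H$ with $\widehat{V_H}(0)=0$ and $|\widehat{V_H}(h)|\ll|h|^{-1}$, and $\mathcal F_H\ges 0$ is the degree-$H$ Fej\'er kernel, with $\widehat{\mathcal F_H}(0)=1$ and $0\les\widehat{\mathcal F_H}(h)\les 1$ --- one builds for each interval $[a,b)$ a trigonometric polynomial $\sigma^+$ of degree $\les H$ with $0\les\mathbbm{1}_{[a,b)}\les\sigma^+$, with $\widehat{\sigma^+}(0)=(b-a)+O(H^{-1})$, and --- the crucial point --- with $|\widehat{\sigma^+}(h)|\ll|h|^{-1}$ for $1\les|h|\les H$, the additive $H^{-1}$ error surviving only at the zero frequency because the Fej\'er contribution to the coefficient at a nonzero $h$ is $\les\frac1{H+1}\les\frac1{|h|}$. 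I would then tensor these coordinatewise: with $\sigma_j^+$ the majorant of $[a_j,b_j)$, set $\Sigma^+(\bm x):=\prod_{j=1}^m\sigma_j^+(x_j)$, so that $\mathbbm{1}_J\les\Sigma^+$ by nonnegativity of the factors, $\widehat{\Sigma^+}$ is supported in $\{|\bm h|\les H\}$, $|\widehat{\Sigma^+}(\bm h)|\ll_m r(\bm h)^{-1}$ for $0<|\bm h|\les H$, and $\widehat{\Sigma^+}(0)=\prod_j\bigl((b_j-a_j)+O(H^{-1})\bigr)=\lambda(J)+O_m(H^{-1})$. Averaging $\Sigma^+$ over $\bm x_1,\dots,\bm x_N$ and expanding into Fourier coefficients gives
\[
\frac1N\#\{n\les N:\bm x_n\in J\}\les\widehat{\Sigma^+}(0)+\sum_{0<|\bm h|\les H}\widehat{\Sigma^+}(\bm h)\,\frac1N\sum_{n\les N}e(\langle\bm h,\bm x_n\rangle),
\]
which is the desired upper bound for $\Delta(J)$; the matching lower bound comes from the corresponding minorant, obtained from $\Sigma^+$ by subtracting the standard correction $\sum_{k=1}^m\frac1{H+1}\bigl(\mathcal F_H(x_k-b_k)+\mathcal F_H(x_k-a_k)\bigr)\prod_{j\neq k}\sigma_j^+(x_j)$, which is nonnegative, has mean $O_m(H^{-1})$, and has Fourier coefficients $\ll_m r(\bm h)^{-1}$ on $\{|\bm h|\les H\}$ for the same reason.

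There is no genuine obstacle, the result being classical, but the step that must be handled with care --- and that yields the clean shape $C_m(H^{-1}+\cdots)$ rather than one carrying a spurious power of $\log H$ --- is controlling where the $L^1$-approximation error of the one-dimensional majorant is allowed to live: one uses the Fej\'er-kernel form of Vaaler's construction precisely so that this error affects only the mean value $\widehat{\sigma^+}(0)$, while every nonzero one-dimensional Fourier coefficient decays like $|h|^{-1}$. Then after tensoring these errors remain confined to the single frequency $\bm h=0$ and cost only $O_m(H^{-1})$. Verifying this, together with the quoted properties of $V_H$ and $\mathcal F_H$ and the routine Fourier expansion above, is all the full proof requires.
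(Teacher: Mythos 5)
The paper does not prove this lemma — it simply cites Kuipers--Niederreiter and the related classical sources — so there is no internal proof to compare your argument against. Your Vaaler--Selberg majorant route, tensored coordinatewise, is a correct and standard derivation. The delicate points you flag do all check out: since $0 \les \widehat{\mathcal F_H}(h) \les 1$, the Fej\'er term contributes at most $\frac{1}{H+1} \les |h|^{-1}$ to $\widehat{\sigma^+}(h)$ for $1 \les |h| \les H$, so the $O(H^{-1})$ error really sits only at $h=0$; combined with $|\widehat{\sigma_j^+}(0)| \les 2$ this yields $|\widehat{\Sigma^+}(\bm h)| \ll_m r(\bm h)^{-1}$ after tensoring; and for the minorant, the telescoping identity $\prod_k s_k - \prod_k m_k = \sum_k (s_k - m_k)\prod_{j<k} m_j\prod_{j>k} s_j$ together with $0 \les m_j \les s_j$ shows that $\Sigma^+ - \sum_k \frac{1}{H+1}\bigl(\mathcal F_H(x_k - b_k) + \mathcal F_H(x_k - a_k)\bigr)\prod_{j\ne k}\sigma_j^+(x_j)$ indeed lies below $\mathbbm{1}_J$ (it need not be nonnegative, which is harmless). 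The exposition in Kuipers--Niederreiter predates Vaaler and uses a different one-dimensional majorant, but the tensoring-plus-Fourier-expansion skeleton is the same; your version is simply the cleaner modern form of the standard argument and is a perfectly good substitute for the cited references.
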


Let us enumerate the numbers $\{ (n/m)^c \}$ as $y_1, \ldots, y_M$ with $M = 2^{2k}$. By assumption,	 $N^{\theta} \ll M \ll N^{2 \theta}$. Clearly there are  $\ll N^{2\theta_1}$ rational numbers $a/b$ satysfying $a < b \ll N^{\theta_1}$. Let us fix one of them. By Lemma~\ref{Erdos-Turan-Koksma},
$$
	\sum_{i=1}^M \mathbbm{1}\left( \left| y_i - \frac{a}{b} \right| \les N^{-1/2} \right) \ll \frac{M}{N^{1/2}} + \frac{M}{H} + \sum_{h \les H} \frac{1}{h} \biggl| \sum_{i=1}^M e\left( h y_i \right) \biggr|.
$$

The exponential sum in the last term can be estimated by the theorem of van der Corput (see, for example,~\cite[Theorem~8.20]{IK}):
$$
	\sum_{m = 2^k+1}^{2^{k+1}} \sum_{n = 2^k+1}^{2^{k+1}} e\left( h \left(\frac{n}{m}\right)^c \right) \ll 2^k 2^{k(1-\kappa)}	= M^{1 - \kappa/2}
$$ with some $\kappa = \kappa(c, H) > 0$. Thus, the total number of ``bad'' pairs of integers does not exceed
$$
	N^{2\theta_1} \left( \frac{M}{N^{1/2}} + \frac{M}{H} + M^{1- \kappa/2} \log H \right) \ll M^{1 - \kappa_0}
$$ for some $\kappa_0 > 0$ as soon as we choose $\theta_1, H$, so that $\kappa \theta / 2 > 3\theta_1$, $H \ges N^{3\theta_1}$. Note that $\theta_1$ can be arbitrarily close to zero. By the prime number theorem, there are $\gg M / (\log M)^2$ pairs of primes in $(2^k, 2^{k+1}]$, so most of them should satisfy~\eqref{restrict_irrat}.

\section{Proof of Proposition~\ref{prop2}}
\label{pf_prop2}

The proof goes similarly to Proposition~2.8 in~\cite{Mull-Spieg}. First, we need to approximate the Piatetski-Shapiro numbers by the Beatty sequence. This can be done via a linear approximation. It is given in the following lemma, the proof of which can be found in~\cite{Spieg14}:

\begin{lemma} \label{Spieg_lemma}
	Let $a, b$ be integers such that $a < b$, and set $K = b-a$. Assume that $f: [a, b] \to \R$ is twice differentiable and $|f''| \les B$. Then for all $\alpha \in f'([a,b])$ one has
	$$
		\# \bigl\{ n \in (a, b]: \floor{f(n)} \neq \floor{\alpha n + f(a) - \alpha a} \bigr\} \les 2B K^3 + KD_K (\alpha n). 
	$$
\end{lemma}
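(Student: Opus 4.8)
The plan is to compare $\floor{f(n)}$ with $\floor{h(n)}$, where $h(n) := \alpha n + f(a) - \alpha a$ is the linear function of slope $\alpha$ agreeing with $f$ at $n=a$, and to show that the two floors can disagree only when $h(n)$ is abnormally close to an integer — an event whose frequency over $n \in (a,b]$ is exactly what the discrepancy $D_K(\alpha n)$ measures.

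First I would establish the linear approximation $|f(n) - h(n)| \le BK^2$ on $[a,b]$. Since $\alpha \in f'([a,b])$ and $f'$ is continuous, there is $\xi \in [a,b]$ with $f'(\xi) = \alpha$, and then for $n \in [a,b]$,
$$f(n) - h(n) = \bigl(f(n) - f(a)\bigr) - \alpha(n-a) = \int_a^n \bigl(f'(t) - f'(\xi)\bigr)\,dt ;$$
since $|f'(t) - f'(\xi)| = \bigl|\int_\xi^t f''\bigr| \le B|t-\xi| \le BK$ throughout $[a,b]$, integrating over an interval of length at most $K$ gives the bound. (A sharper constant $BK^2/2$ is available but not needed.)

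Next I would turn a floor mismatch into a near-integer condition. If $\floor{f(n)} \ne \floor{h(n)}$, then the closed interval with endpoints $f(n)$ and $h(n)$ contains an integer $m$ (take $m = \floor{\max(f(n),h(n))}$); since $h(n)$ is one of those endpoints, $|m - h(n)| \le |f(n) - h(n)| \le BK^2$, i.e. $\norm{\alpha n + \beta} \le BK^2$ with $\beta := f(a) - \alpha a$. Hence
$$\#\bigl\{n \in (a,b] : \floor{f(n)} \ne \floor{h(n)}\bigr\} \le \#\bigl\{n \in (a,b] : \norm{\alpha n + \beta} \le BK^2\bigr\}.$$
If $BK^2 \ge 1/2$ the right-hand side is at most $K \le 2BK^3$ and there is nothing to prove, so assume $BK^2 < 1/2$. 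The condition $\norm{\alpha n + \beta} \le BK^2$ places $\{\alpha n\}$ in an arc of length $2BK^2$ on $\R/\Z$. When this arc is a genuine subinterval of $[0,1)$ I bound the count directly against the discrepancy by $K\bigl(2BK^2 + D_K(\alpha n)\bigr)$; when the arc wraps around $0$, its complement is a subinterval of $[0,1)$ of length $1 - 2BK^2$, and applying the discrepancy bound to that complement shows that at least $K\bigl(1 - 2BK^2 - D_K(\alpha n)\bigr)$ of the $K$ indices $n \in (a,b]$ avoid the arc, again leaving at most $K\bigl(2BK^2 + D_K(\alpha n)\bigr) = 2BK^3 + K D_K(\alpha n)$ inside it. This is the asserted bound; the boundary of the arc is harmless (e.g. enlarge $BK^2$ by an arbitrarily small amount, or absorb the finitely many endpoint terms).

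The only point that needs genuine care is this last step: a naive estimate of the arc around $0$ as a union of two intervals $[0,BK^2)$ and $[1-BK^2,1)$ would cost two applications of the discrepancy and give the weaker constant $2K\,D_K(\alpha n)$; expressing the near-integer event, in the wrap-around case, as the complement of the \emph{single} interval of length $1-2BK^2$ invokes $D_K(\alpha n)$ only once and yields exactly $K\,D_K(\alpha n)$. The preceding two steps are elementary.
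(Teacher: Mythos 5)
The paper does not prove this lemma itself but cites \cite{Spieg14} for it; your proof is correct and is precisely the standard argument one expects there: the mean-value/Taylor estimate $|f(n)-h(n)|\le BK^{2}$, the observation that a floor mismatch forces $\|\alpha n+\beta\|\le BK^{2}$, and a single application of the discrepancy bound to the resulting arc of length $2BK^{2}$. Your handling of the wrap-around case via the complement interval is exactly the detail needed to get the clean coefficient $K\,D_K(\alpha n)$ rather than $2K\,D_K(\alpha n)$, and the dispatch of the degenerate case $BK^{2}\ge 1/2$ and of the arc's boundary are both sound.
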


We also need a mean-square discrepancy estimate for the sequence $\{ \alpha n \}$ (see~\cite[Lemma~4.4]{Spieg}):

\begin{lemma} \label{mean-square-discr}
	For any integer $N \ges 3$ we have
	$$
	\int_0^1 D_N (\alpha n) d\alpha \ll \frac{(\log N)^2}{N},
	$$ where the implied constant in the estimate is absolute.
\end{lemma}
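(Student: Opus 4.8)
The plan is to combine the one-dimensional Erdős–Turán–Koksma inequality (Lemma~\ref{Erdos-Turan-Koksma} with $m=1$) with an explicit evaluation of the mean of the geometric kernel. Applying Lemma~\ref{Erdos-Turan-Koksma} to the points $\bm{x}_n = \alpha n$ gives, for every integer $H \ges 1$,
$$
	D_N(\alpha n) \ll \frac{1}{H} + \sum_{h=1}^{H} \frac{1}{h}\, \Bigl| \frac{1}{N} \sum_{n \les N} e(h \alpha n) \Bigr|.
$$
Since the right-hand side is a nonnegative measurable function of $\alpha$, I would integrate over $\alpha \in [0,1]$ and interchange the sum with the integral, reducing the problem to bounding, for each fixed $h$, the quantity $\int_0^1 \bigl| \sum_{n \les N} e(h\alpha n) \bigr|\, d\alpha$.

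For the geometric sum I would use the standard estimate $\bigl| \sum_{n \les N} e(h\alpha n) \bigr| \ll \min\bigl(N, \norm{h\alpha}^{-1}\bigr)$. The point of keeping the truncation at $N$ is that $\norm{h\alpha}^{-1}$ alone is not integrable near the integers, so the min is precisely what makes the mean finite. Substituting $u = h\alpha$ and using that $u \mapsto \norm{u}$ is $1$-periodic, one gets
$$
	\int_0^1 \min\bigl(N, \norm{h\alpha}^{-1}\bigr)\, d\alpha = \int_0^1 \min\bigl(N, \norm{u}^{-1}\bigr)\, du = 2\int_0^{1/2} \min\bigl(N, u^{-1}\bigr)\, du \ll \log N,
$$
the last bound coming from splitting the integral at $u = 1/N$. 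Hence $\int_0^1 \bigl| \frac{1}{N}\sum_{n \les N} e(h\alpha n) \bigr|\, d\alpha \ll (\log N)/N$ uniformly in $h$.

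Putting the pieces together yields
$$
	\int_0^1 D_N(\alpha n)\, d\alpha \ll \frac{1}{H} + \frac{\log N}{N} \sum_{h=1}^{H} \frac{1}{h} \ll \frac{1}{H} + \frac{(\log N)(\log H)}{N},
$$
and choosing $H = N$ gives the claimed bound $\ll (\log N)^2 / N$ with an absolute implied constant. There is no genuine obstacle here: the only points requiring a little care are the change of variables/periodicity step in evaluating the mean of the kernel, and the observation that the harmonic sum $\sum_{h \les N} 1/h$ is responsible for the second logarithm — which is where the exponent $2$ comes from.
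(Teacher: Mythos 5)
Your proof is correct and complete: the one-dimensional Erd\H{o}s--Tur\'an inequality, the $L^1$ bound $\int_0^1 \min\bigl(N,\norm{h\alpha}^{-1}\bigr)\,d\alpha \ll \log N$ obtained from the substitution $u = h\alpha$ together with $1$-periodicity of $\norm{\cdot}$, and the choice $H = N$ give the claimed $(\log N)^2/N$ with an absolute constant. The paper does not prove this lemma itself but cites \cite[Lemma~4.4]{Spieg}; your argument is the standard route and matches the proof given in that reference.
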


We start by splitting the interval $(N, 2N]$ to $L$ subintervals of length $K$ each, so that $LK = N$. Denote them as $(a_i, a_{i+1}]$, where $a_i = N + iK$. By the triangle inequality,
\begin{multline} \label{triangle_ineq}
	\left| \# \bigl\{ n \in (a_i, a_{i+1}]: t(\floor{(pn)^c}) = 1, \ t(\floor{(qn)^c}) = 1 \bigr\} - \frac{K}{4} \right| \les \\
	T_1 (\alpha, i) + T_2 (\alpha, i) + T_3 (\alpha, i),
\end{multline} where
\begin{multline*}
	T_1 (\alpha, i) := \biggl| \# \bigl\{ n \in (a_i, a_{i+1}]: t(\floor{(pn)^c}) = 1, \ t(\floor{(qn)^c}) = 1 \bigr\} - \\
	\# \bigl\{ n \in (a_i, a_{i+1}]: t(\floor{(pn)^c}) = 1, \ t(\floor{\gamma \alpha n + (qa_i)^c - \gamma \alpha a_i}) = 1 \bigr\} \biggr|,
\end{multline*}
\begin{multline*}
	T_2 (\alpha, i) := \biggl| \# \bigl\{ n \in (a_i, a_{i+1}]: t(\floor{(pn)^c}) = 1, \ t(\floor{\gamma \alpha n + (qa_i)^c - \gamma \alpha a_i}) = 1 \bigr\} - \\
	\# \bigl\{ n \in (a_i, a_{i+1}]: t(\floor{\alpha n + (pa_i)^c - \alpha a_i}) = 1, \ t(\floor{\gamma \alpha n + (qa_i)^c - \gamma \alpha a_i}) = 1 \bigr\} \biggr|,
\end{multline*}
\begin{multline*}
	T_3 (\alpha, i) := \biggl| \# \bigl\{ n \in (a_i, a_{i+1}]: t(\floor{\alpha n + (pa_i)^c - \alpha a_i}) = 1, \\
	t(\floor{\gamma \alpha n + (qa_i)^c - \gamma \alpha a_i}) = 1 \bigr\} - \frac{K}{4} \biggr|.
\end{multline*} Let us denote $f(n) := (2^k n)^c$. Next, we integrate both sides of the inequality~\eqref{triangle_ineq} over $\alpha \in [f'(a_i), f'(a_{i+1})]$, divide by $(f'(a_{i+1}) - f'(a_i))$, and sum over $0 \les i < L$. We obtain
\begin{multline} \label{three_T}
	\left| \# \bigl\{ n \in (a_0, a_L]: t(\floor{(pn)^c}) = 1, \ t(\floor{(qn)^c}) = 1 \bigr\} - \frac{LK}{4} \right| \les \\ \sum_{0 \les i < L} \frac{1}{f'(a_{i+1}) - f'(a_i)} \int_{f'(a_i)}^{f'(a_{i+1})} \bigl( T_1 (\alpha, i) + T_2 (\alpha, i) + T_3 (\alpha, i) \bigr) d\alpha.
\end{multline} 

By Lemma~\ref{Spieg_lemma},
\begin{equation} \label{contribution}
	T_j (\alpha, i) \ll 2^{kc} N^{c-2} K^3 + K D_K (\alpha n) \qquad \text{for } j = 1, 2.
\end{equation} We have omitted in $T_1 (\alpha, i)$ and $T_2 (\alpha, i)$ the conditions 
$$
	t(\floor{(pn)^c}) = 1 \qquad  \text{and} \qquad t(\floor{\gamma \alpha n + (qa_i)^c - \gamma \alpha a_i}) = 1
$$ correspondingly, since they only decrease the bound. The contribution to the right side of~\eqref{three_T} from the first term on the right side of~\eqref{contribution} is bounded by
$$
	2^{kc} N^{c-2} K^3 L = 2^{kc} N^{c-1} K^2.
$$

Next,
$$
	|f'(2N) - f'(N)| = \biggl| \sum_{0 \les i < L} f'(a_{i+1}) - f'(a_i) \biggr| \asymp L |f'(a_{i+1}) - f'(a_i)|
$$ for any fixed $i$. So we get
$$
	\frac{1}{f'(a_{i+1}) - f'(a_i)} \ll \frac{N}{K} \frac{1}{f'(2N) - f'(N)} \asymp \frac{N}{K 2^{kc} N^{c-1}}.
$$ Then, by Lemma~\ref{mean-square-discr}, the contribution to~\eqref{three_T} from the second term in~\eqref{contribution} is bounded by
\begin{multline*}
	K \sum_{0 \les i < L} \frac{1}{f'(a_{i+1}) - f'(a_i)} \int_{f'(a_i)}^{f'(a_{i+1})} D_K (\alpha n) d\alpha \ll \\
	\frac{N}{2^{kc} N^{c-1}} \sum_{0 \les i < L} \int_{f'(a_i)}^{f'(a_{i+1})} D_K (\alpha n) d\alpha \ll \\
	\frac{N}{2^{kc} N^{c-1}} \bigl( f'(2N) - f'(N) + 1 \bigr) \int_0^1 D_K (\alpha n) d\alpha \ll \\ \frac{N \cdot 2^{kc} N^{c-1}}{2^{kc} N^{c-1}} \frac{(\log K)^2}{K} = N \frac{(\log K)^2}{K}.
\end{multline*}

The contribution from $T_3 (\alpha, i)$ is trivially bounded by
$$
	\sum_{0 \les i < L} \frac{1}{f'(a_{i+1}) - f'(a_i)} \int_{f'(a_i)}^{f'(a_{i+1})} T_3 (\alpha, i) d\alpha \ll N J(N, K).
$$ Finally,
	\begin{multline*}
	\biggl| \frac{1}{N} \# \bigl\{ n \in (N, 2N]: t(\floor{(pn)^c}) = 1, \ t(\floor{(qn)^c}) = 1 \bigr\} - \frac{1}{4} \biggr| \ll \\
	2^{kc} N^{c-2} K^2 + \frac{(\log N)^2}{K} + J(N, K)
\end{multline*} as desired.

\section{Proof of Proposition~\ref{prop3}: cutting of digits}
\label{cutting}
 
In this section we approximate the functions $s(\floor{\alpha n + \beta_1})$ and $s(\floor{\gamma \alpha n + \beta_2})$ by the truncated functions $s_{\rho}(\floor{\alpha n + \beta_1})$ and $s_{\rho}(\gamma \alpha n + \beta_2)$ with some integer $\rho > 0$, such that $2^{\rho}$ is small compared to $N$. We need the following two versions of van der Corput inequality:

\begin{lemma} \label{generalized_Corput}
	Let $I$ be a finite interval in $\Z$ containing $N$ integers and let $z_n \in \C$ for $n \in I$. Then
	
	i) For all integers $R \ges 1$ we have
	$$
		\biggl| \sum_{n \in I} z_n \biggr|^2 \les \frac{N + R - 1}{R} \sum_{0 \les |r| < R} \left( 1 - \frac{|r|}{R} \right) \sum_{\substack{n \in I \\ n + r \in I}} z_{n+r} \overline{z_n}.
	$$
	
	ii) Assume that $K \subset \N$ is a finite nonempty set. Then
	$$
		\biggl| \sum_{n \in I} z_n \biggr|^2 \les \frac{N + \max K - \min K}{|K|^2} \sum_{k_1, k_2 \in K} \sum_{\substack{n \in \Z \\ n, n+k_1-k_2 \in I}} z_n \overline{z_{n+k_1-k_2}}.
	$$
\end{lemma}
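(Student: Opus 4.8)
The plan is to obtain both inequalities from the classical van der Corput (Weyl) differencing trick, applied in two slightly different guises. In each case I would first extend the sequence by setting $z_n := 0$ for $n \notin I$, then express the target sum $S := \sum_{n \in I} z_n$ as an average of translates $\sum_n z_{n+r}$ (resp.\ $\sum_n z_{n+k}$), apply the Cauchy--Schwarz inequality using the fact that the translated partial sums are supported on a short interval, and finally expand the resulting square and regroup the terms according to the difference of the shift parameters.

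For part (i): with $z_n = 0$ off $I$ one has $\sum_{n\in\Z} z_{n+r} = S$ for every $0 \les r < R$, so
\[
	R\,S = \sum_{0\les r<R}\sum_{n\in\Z} z_{n+r} = \sum_{n\in\Z}\ \sum_{0\les r<R} z_{n+r},
\]
and the inner sum over $r$ vanishes unless $n+r \in I$ for some $0 \les r < R$, i.e.\ outside a set of at most $N+R-1$ integers. Cauchy--Schwarz then gives $R^2|S|^2 \les (N+R-1)\sum_{n\in\Z}\bigl|\sum_{0\les r<R} z_{n+r}\bigr|^2$. Writing $\bigl|\sum_{r} z_{n+r}\bigr|^2 = \sum_{r_1,r_2} z_{n+r_1}\overline{z_{n+r_2}}$, summing over $n$, and shifting the summation index converts the $n$-sum into $\sum_{n} z_{n+r_1-r_2}\overline{z_n}$; collecting the pairs $(r_1,r_2)$ with $r_1-r_2 = r$ (there are $R-|r|$ of them for each $|r|<R$) and discarding the vanishing terms produces exactly the right-hand side of (i) after division by $R^2$.

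For part (ii) the same computation goes through with the window $\{0,1,\dots,R-1\}$ replaced by the finite set $K$: from $\sum_{n\in\Z} z_{n+k} = S$ for all $k\in K$ one gets $|K|\,S = \sum_{n\in\Z}\sum_{k\in K} z_{n+k}$, the inner sum is supported on at most $N + \max K - \min K$ integers, and Cauchy--Schwarz yields $|K|^2|S|^2 \les (N+\max K-\min K)\sum_{n}\bigl|\sum_{k\in K} z_{n+k}\bigr|^2$. Expanding the square gives $\sum_{k_1,k_2\in K}\sum_{n} z_{n+k_1}\overline{z_{n+k_2}}$; reindexing $n$ and using that the double sum over $(k_1,k_2)$ is symmetric under $k_1\leftrightarrow k_2$ turns this into $\sum_{k_1,k_2\in K}\sum_{n,\,n+k_1-k_2\in I} z_n\overline{z_{n+k_1-k_2}}$, and dividing by $|K|^2$ finishes the proof.

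The argument is entirely elementary, and I do not expect a genuine obstacle here: the only points that require a little care are the bookkeeping of the supports of the shifted partial sums, which control the Cauchy--Schwarz losses $N+R-1$ and $N+\max K-\min K$, and keeping track of the direction of the shift when reindexing, so that the correlation sums come out as $z_{n+r}\overline{z_n}$ in (i) and as $z_n\overline{z_{n+k_1-k_2}}$ in (ii).
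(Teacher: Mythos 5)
Your proof is correct. The paper itself does not prove this lemma but cites \cite[Lemme~2.7]{MR2009} for part (i) and \cite[Proposition~6.14]{DMS_126} for part (ii), and your argument is the standard van der Corput differencing proof that those references use: extend $z_n$ by zero off $I$, average over the shift window to write $RS$ (resp.\ $|K|S$) as a sum over $\Z$ of a short window average, apply Cauchy--Schwarz against the indicator of the support of that window average (of size $N+R-1$, resp.\ $N+\max K-\min K$), expand the square, and reindex by the difference of the shift parameters (with the correct multiplicity count $R-|r|$ in part (i) and the $k_1\leftrightarrow k_2$ relabeling in part (ii) to put the correlation in the stated form).
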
 The proof of the first part can be found in~\cite[Lemme~2.7]{MR2009}, for the second part see~\cite[Proposition~6.14]{DMS_126}.

We also need the following statement concerning the distribution of the numbers $\{ \alpha n + \beta \}$ (see~\cite[Lemma~4.3]{Spieg}):

\begin{lemma} \label{Spieg_frac_parts}
	Let $J$ be an interval in $\R$ containing $N$ integers and let $\alpha$ and $\beta$ be real numbers. Further, assume that $t$ and $T$ are integers such that $0 \les t < T$. Then
	$$
		\# \left\{ n \in J: \frac{t}{T} \les \{ \alpha n + \beta \} < \frac{t+1}{T}  \right\} = \frac{N}{T} + O\bigl( N D_N (\alpha n) \bigr),
	$$ where the implied constant is absolute.
\end{lemma}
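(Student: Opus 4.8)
The plan is to reduce the statement to the definition of the discrepancy $D_N$ by two elementary translation arguments. First I would reduce to the case where $J = \{1, \dots, N\}$. Writing $J = \{a+1, \dots, a+N\}$ for a suitable integer $a$, the values $\{\alpha n + \beta\}$ with $n \in J$ are precisely the values $\{\alpha j + \beta'\}$ with $j = 1, \dots, N$ and $\beta' = \alpha a + \beta$; thus the cardinality we wish to estimate is unchanged, and only the phase shift is affected. (Replacing one window of $N$ consecutive integers by another alters $D_N(\alpha n)$ by at most an absolute factor, which is exactly why the right-hand side is stated in terms of $D_N(\alpha n)$.)

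Next I would remove the phase $\beta'$. The set $\{j \les N : \{\alpha j + \beta'\} \in [t/T, (t+1)/T)\}$ equals $\{j \les N : \{\alpha j\} \in I\}$, where $I \subseteq [0,1)$ is the interval $[t/T, (t+1)/T)$ translated by $-\beta'$ modulo $1$. Modulo $1$, this set $I$ is either a single subinterval of $[0,1)$ of length $1/T$, or (when the translated interval straddles $0$) a disjoint union of two subintervals $I_1, I_2 \subseteq [0,1)$ with $\lambda(I_1) + \lambda(I_2) = 1/T$. In either case, applying the definition of $D_N(\alpha n)$ to each of the (at most two) intervals gives $\#\{j \les N : \{\alpha j\} \in I_\ell\} = N \lambda(I_\ell) + O(N D_N(\alpha n))$, and summing yields $\#\{j \les N : \{\alpha j + \beta'\} \in [t/T, (t+1)/T)\} = N/T + O(N D_N(\alpha n))$. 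Undoing the two translations gives the claim, with an implied constant that is absolute (at most $2$ from the possible splitting of $I$, multiplied by the constant implicit in comparing length-$N$ windows) and in particular independent of $\alpha$, $\beta$, $t$, and $T$.

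The argument is entirely routine; there is no genuine obstacle. The only points requiring minor care are the wrap-around of the translated interval modulo $1$ — handled by splitting into two arcs — and the harmless dependence of $D_N(\alpha n)$ on the choice of window of $N$ consecutive integers, both of which only cost an absolute constant.
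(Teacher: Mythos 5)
Your argument is correct, and it is the natural elementary proof. Note, however, that the paper does not contain its own proof of this lemma: it is quoted verbatim from Spiegelhofer (cited as Lemma~4.3 of \cite{Spieg}), so there is no internal proof to compare against. What you have written is the standard argument one would expect to find there: translate $n = a + j$ to normalize $J$ to $\{1,\dots,N\}$ and absorb $\alpha a$ into the phase, then observe that the condition $\{\alpha j + \beta'\} \in [t/T,(t+1)/T)$ is equivalent to $\{\alpha j\}$ lying in a fixed arc of length $1/T$, which splits into at most two subintervals of $[0,1)$, and apply the definition of $D_N$ to each.

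One small remark: the parenthetical in your first paragraph about ``replacing one window of $N$ consecutive integers by another alters $D_N(\alpha n)$ by at most an absolute factor'' is vestigial and could mislead a reader. In the lemma as stated, $D_N(\alpha n)$ unambiguously refers to the discrepancy of $(\alpha\cdot 1,\dots,\alpha\cdot N)$; after your change of variables $n = a + j$, the translation is carried entirely by the phase $\beta' = \alpha a + \beta$ and then by the shift of the target interval modulo $1$, so you never need to compare discrepancies over two different windows. The implied constant coming out of your proof is simply $2$, from the possible wrap-around of the shifted arc, and that is all that is needed.
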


Let us denote the expression in the left side of~\eqref{expsum} by $S_0 = S_0 (N, D)$. Then trivially $|S_0| \les DN$. By Cauchy inequality,
$$
	|S_0|^2 \les D \int_D^{2D} \max_{\beta_1, \beta_2 \ges 0} \biggl| \sum_{n \les N} e\left( \frac{1}{2} s(\floor{\alpha n + \beta_1}) + \frac{1}{2} s \left( \floor{\gamma \alpha n + \beta_2}\right) \right) \biggr|^2 d\alpha.
$$ Then, by the first part of Lemma~\ref{generalized_Corput},
\begin{multline*}
	|S_0|^2 \les \frac{DN}{R_0} \int_D^{2D} \max_{\beta_1, \beta_2 \ges 0} \sum_{r_0 \les R_0} \biggl| \sum_{\substack{n \les N \\ n+r_0 \les N}} e \biggl( \frac{1}{2} s(\floor{\alpha n + \alpha r_0 + \beta_1}) - \frac{1}{2} s(\floor{\alpha n + \beta_1}) + \\
	\frac{1}{2} s(\floor{\gamma \alpha n + \gamma \alpha r_0 + \beta_2}) - \frac{1}{2} s(\floor{\gamma \alpha n + \beta_2}) \biggr) \biggr| d\alpha + O\left( \frac{(DN)^2}{R_0}  \right),
\end{multline*} where the error term comes from $r=0$.

To truncate the sum of digits function we apply the version of carry property given by the following lemma (see~\cite[Lemma~4.5]{Spieg}):

\begin{lemma}[carry propagation lemma] \label{propagation}
	Let $N, r, \lambda$ be nonnegative integers and $\alpha > 0$, $\beta \ges 0$ real numbers. Then
	\begin{multline*}
		\# \bigl\{ n \les N : s(\floor{\alpha n + \alpha r + \beta}) - s(\floor{\alpha n + \beta}) \neq \\
		s_{\lambda}(\floor{\alpha n + \alpha r + \beta}) - s_{\lambda}(\floor{\alpha n + \beta})  \bigr\} \les r \left( \frac{\alpha N}{2^{\lambda}} + 2 \right).
	\end{multline*}
\end{lemma}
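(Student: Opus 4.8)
The plan is to reduce the statement to counting how often the high-order part of $\floor{\alpha n + \beta}$ (the binary digits in positions $\ges \lambda$) changes when $n$ is replaced by $n+r$, and then to bound that count by elementary means.

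\textbf{Step 1 (digit splitting).} For any nonnegative integer $M$, write $M = 2^{\lambda}\floor{M/2^{\lambda}} + (M \bmod 2^{\lambda})$. The binary expansions of the two summands occupy disjoint blocks of digit positions, so $s(M) = s(\floor{M/2^{\lambda}}) + s_{\lambda}(M)$. I would apply this to $M = \floor{\alpha n + \alpha r + \beta}$ and to $M = \floor{\alpha n + \beta}$, subtract, and use the elementary identity $\floor{\floor{x}/2^{\lambda}} = \floor{x/2^{\lambda}}$. Setting $g(m) := \floor{(\alpha m + \beta)/2^{\lambda}}$, this yields
\begin{multline*}
	\bigl( s(\floor{\alpha n + \alpha r + \beta}) - s(\floor{\alpha n + \beta}) \bigr) - \bigl( s_{\lambda}(\floor{\alpha n + \alpha r + \beta}) - s_{\lambda}(\floor{\alpha n + \beta}) \bigr) \\
	= s(g(n+r)) - s(g(n)).
\end{multline*}
In particular the left-hand side vanishes whenever $g(n+r) = g(n)$, so the exceptional set in the lemma is contained in $\{ n \les N : g(n+r) \neq g(n) \}$, and it suffices to bound the size of the latter by $r(\alpha N/2^{\lambda} + 2)$.

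\textbf{Step 2 (counting the level changes).} Since $\alpha, r \ges 0$ the function $g$ is non-decreasing, so $g(n+r) \neq g(n)$ is equivalent to $g(n+r) > g(n)$, i.e. (as $g$ is integer-valued) to $g(n) + 1 \les g(n+r)$. For a fixed integer level $\ell$, the set $\{ m : g(m) = \ell \}$ is an interval of consecutive integers, say $[u_{\ell}, v_{\ell}]$; among the $n$ with $g(n) = \ell$ the exceptional ones are precisely those with $n + r > v_{\ell}$, hence at most $r$ of them (the top $r$ elements of that block). The number of levels attained by $g$ on $\{ 1, \dots, N \}$ is at most $g(N) - g(1) + 1$, and $g(N) - g(1) = \floor{(\alpha N + \beta)/2^{\lambda}} - \floor{(\alpha + \beta)/2^{\lambda}} \les \alpha N/2^{\lambda} + 1$ by the crude bound $\floor{x} - \floor{y} \les x - y + 1$. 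Multiplying the two estimates gives $\# \{ n \les N : g(n+r) \neq g(n) \} \les r(\alpha N/2^{\lambda} + 2)$, as required.

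\textbf{On the difficulties.} There is no genuinely hard step: the content is the digit-splitting identity together with a bookkeeping count. The one point requiring care is in Step 2, namely keeping the bound on the number of attained levels free of $\beta$ (which is unbounded). This is exactly why one estimates $g(N) - g(1)$ directly instead of using a telescoping sum $\sum_n (g(n+r) - g(n))$, which would introduce $g(N+r)$ and hence a spurious dependence on $\beta$. The degenerate cases $r = 0$ (exceptional set empty) and $\lambda = 0$ ($s_{\lambda} \equiv 0$, $2^{\lambda} = 1$) are checked directly and cause no trouble.
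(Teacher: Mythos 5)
Your proof is correct. The paper does not prove this lemma itself but cites it from Spiegelhofer (Lemma~4.5 there), so there is no in-paper argument to compare against; the digit-splitting identity $s(M) = s(\floor{M/2^{\lambda}}) + s_{\lambda}(M)$ together with $\floor{\floor{x}/2^{\lambda}} = \floor{x/2^{\lambda}}$, followed by counting level changes of $g(m) := \floor{(\alpha m + \beta)/2^{\lambda}}$, is the standard and natural route, and your bookkeeping (at most $r$ exceptional $n$ per attained level, at most $\alpha N/2^{\lambda}+2$ attained levels) recovers exactly the stated constant. One small remark on your closing discussion: the telescoping alternative you dismiss is not actually problematic. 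Writing $\{n : g(n+r) \neq g(n)\} \subseteq \bigcup_{j=1}^{r} \{n : g(n+j) \neq g(n+j-1)\}$ and bounding the $j$-th set by the number of level changes on $\{j+1,\dots,N+j\}$, namely by $g(N+j)-g(j) \les \alpha N/2^{\lambda}+1$, gives the same (indeed marginally sharper) bound $r(\alpha N/2^{\lambda}+1)$; the $\beta$-dependence you were worried about cancels in the difference of floors. Both decompositions are fine.
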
 Applying this lemma, we get
\begin{multline*}
	\# \bigl\{ n \les N : s(\floor{\alpha n + \alpha r_0 + \beta_1}) - s(\floor{\alpha n + \beta_1}) \neq \\
	s_{\lambda}(\floor{\alpha n + \alpha r_0 + \beta_1}) - s_{\lambda}(\floor{\alpha n + \beta_1})  \bigr\} \les r_0 \left( \frac{\alpha N}{2^{\lambda}} + 2 \right).
\end{multline*} The same clearly holds true for $\alpha$, $\beta_1$ replaced by $\gamma \alpha$, $\beta_2$. Thus, by the union bound,
\begin{multline*}
	|S_0|^2 \les \frac{DN}{R_0} \int_D^{2D} \max_{\beta_1, \beta_2 \ges 0} \sum_{r_0 \les R_0} \biggl| \sum_{\substack{n \les N \\ n+r_0 \les N}} e \biggl( \frac{1}{2} s_{\lambda} (\floor{\alpha n + \alpha r_0 + \beta_1}) - \frac{1}{2} s_{\lambda}(\floor{\alpha n + \beta_1}) + \\
	\frac{1}{2} s_{\lambda}(\floor{\gamma \alpha n + \gamma \alpha r_0 + \beta_2}) - \frac{1}{2} s_{\lambda}(\floor{\gamma \alpha n + \beta_2}) \biggr) \biggr| d\alpha + O\left( (DN)^2 \bigl( \frac{1}{R_0} +  \frac{R_0 D}{2^{\lambda}} + \frac{R_0}{N} \bigr) \right).
\end{multline*} For brevity, we denote the error term in the last expression by $E_1^2$:
\begin{equation} \label{for_E_1}
	E_1^2 := (DN)^2 \left( \frac{1}{R_0} + \frac{R_0 D}{2^{\lambda}} + \frac{R_0}{N} \right).
\end{equation}

Applying Cauchy inequality followed by $m$ applications of the second part of Lemma~\ref{generalized_Corput}, we obtain
\begin{multline*}
	|S_0|^{2^{m+1}} \ll \frac{(DN)^{2^{m+1}-1}}{R_0} \int_D^{2D} \max_{\beta_1, \beta_2 \ges 0} \sum_{r_0 \les R_0} \frac{1}{|K_1 (\alpha)|^2 \ldots |K_m (\alpha)|^2} \sum_{k_1, k_1' \in K_1 (\alpha)} \ldots \\ 
	\sum_{k_m, k_m' \in K_m (\alpha)}
	\biggl| \sum_{n \les N} e\biggl( \frac{1}{2} \sum_{\substack{\varepsilon_0, \ldots, \varepsilon_m \\ \in \{0, 1\}}}  \biggl( s_{\lambda}\big(\floor{\alpha n + \beta_1 + \varepsilon_0 \alpha r_0 + \alpha \langle \bm{\varepsilon}, \bm{k} - \bm{k}' \rangle }\big) + \\ s_{\lambda}\big(\floor{\gamma \alpha n + \beta_2 + \varepsilon_0 \gamma \alpha r_0 + \gamma \alpha \langle \bm{\varepsilon}, \bm{k} - \bm{k}' \rangle}\big) \biggr) \biggr) \biggr| d\alpha + E_1^{2^{m+1}} + E_2^{2^{m+1}},
\end{multline*} where $\bm{\varepsilon} = (\varepsilon_1, \ldots, \varepsilon_m)$, $\bm{k} - \bm{k}' = (k_1 - k_1', \ldots, k_m - k_m')$, $ \max_i|K_i (\alpha)| \ll B$ for all $D \les \alpha < 2D$ and some $0 < B \les N$, which will be chosen later. Here
\begin{equation} \label{for_E_2}
	E_2^{2^{m+1}} := (DN)^{2^{m+1}} \frac{B}{N}.
\end{equation} Note that the additional restrictions $n+r_0 \les N, n+k_i-k_i' \les N$ were removed, since the corresponding error terms are absorbed by $E_2$.

Each application of van der Corput inequality corresponds to cutting away of $\mu$ leading digits of $\floor{\alpha n + \beta_1 + \ldots}$ and $\floor{\gamma \alpha n + \beta_2 + \ldots}$, where we require $\mu > 0$ and $\lambda - m \mu > \sigma > 0$. The numbers $\mu \gg \sigma \ges 10$ will be chosen later. We choose the subsets $K_i (\alpha)$ in the following way:
\begin{multline*}
	K_i (\alpha) := \bigl\{ k \les B : \delta_j (\floor{\alpha k}) = 0, \ \delta_j (\floor{\gamma \alpha k}) = 0 \\
	\text{for all } j \in (\lambda - i \mu - \sigma, \ \lambda - (i-1) \mu]  \bigr\},
\end{multline*} where $\delta_j(n)$ is a binary digit of $n$ in the $j$-th position.

Next, we show that for most $n \les N$ the digits of $\floor{\alpha n + \beta_1 + \ldots }$ (when $\varepsilon_i = 1$) and $\floor{\alpha n + \beta_1 + \ldots}$ (when $\varepsilon_i = 0$) in the positions in $(\lambda - i\mu, \ \lambda - (i-1)\mu]$ are the same, and, at the same time, this property holds true for the second factor $\floor{\gamma \alpha n + \beta_2 + \ldots}$ as well. Indeed, for a real number $t$ one can write
$$
	\floor{t + \alpha \varepsilon_i (k_i - k_i')} = \floor{t} + \floor{\alpha \varepsilon_i k_i} - \floor{\alpha \varepsilon_i k_i'} + c_i,
$$ where $c_i$ is an integer depeding on $t, \alpha, \varepsilon_i, k_i, k_i'$, such that $|c_i| \les 2$. In order to avoid propagation from the right, we require $\floor{t}$ to have at least one $1$ and at least one $0$ in the positions in $(\lambda - i\mu - \sigma + 2, \ \lambda - i \mu]$. Then adding the terms $\floor{\alpha k_i}$, $\floor{\alpha k_i'}$, and $c_i$, does not change the digits of $\floor{t}$ in $(\lambda - i \mu, \ \lambda - (i-1) \mu]$. Thus, the digits of $\floor{t + \alpha (k_i - k_i')}$ and $\floor{t}$ are the same in the interval $(\lambda - i \mu, \ \lambda - (i-1) \mu]$, and so 
$$
	s_{\lambda - (i-1)\mu}(\floor{\alpha n + \beta_1 + \ldots}) = s_{\lambda - i\mu}(\floor{\alpha n + \beta_1 + \ldots}).
$$ By our choice of $K_i (\alpha)$, the same property holds for the second factor with 
$$
	\floor{\gamma \alpha n + \beta_2 + \ldots}.
$$

It only remains to show that for a ``typical'' value of $\alpha \in [D, 2D)$ the numbers $\floor{\alpha n + \beta_3}$ and $\floor{\gamma \alpha n + \beta_4}$ with 
$$
	\beta_3 = \beta_1 + \alpha \varepsilon_0 r_0 + \alpha \langle \bm{\varepsilon}, \bm{k} - \bm{k}' \rangle - \alpha \varepsilon_i (k_i - k_i')
$$ and
$$
	\beta_4 = \beta_2 + \gamma \alpha \varepsilon_0 r_0 + \gamma \alpha \langle \bm{\varepsilon}, \bm{k} - \bm{k}' \rangle - \gamma \alpha \varepsilon_i (k_i - k_i')	
$$ satisfy the aforementioned property for most $n \les N$. Let us estimate the contribution from the exceptional set of $n \les N$. For each such $n$ there exists a pattern $(\varepsilon_0^{(n)}, \ldots, \varepsilon_{i-1}^{(n)}, 0, \varepsilon_{i+1}^{(n)}, \ldots, \varepsilon_m^{(n)})$ such that either $\floor{\alpha n + \beta_3}$ or $\floor{\gamma \alpha n + \beta_4}$ have only $0$'s or only $1$'s in the positions in $(\lambda - i\mu - \sigma + 2, \lambda - i\mu]$. By the union bound it is enough to consider the case when $\floor{\alpha n + \beta_3}$ has only $0$'s in this interval. This condition can be rewritten in terms of the fractional parts as
$$
	\left\{ \frac{\alpha n + \beta_3}{2^{\lambda - i\mu}}  \right\} \in \left[ 0, \frac{1}{2^{\sigma - 2}} \right).
$$ By Lemma~\ref{Spieg_frac_parts}, the number of such $n \les N$ is bounded from above by
$$
	\frac{N}{2^{\sigma - 2}} + O\left( N D_N \left( \frac{\alpha n}{2^{\lambda - i\mu}} \right) \right).
$$ Then, by Lemma~\ref{mean-square-discr},
$$
	\int_D^{2D} D_N \left( \frac{\alpha n}{2^{\lambda - i\mu}} \right) d\alpha \ll \frac{D}{N} (\log N)^2.
$$ Using the fact that there are at most $2^{m+1}$ terms in the union bound, we find that the total contribution to $|S_0|^{2^{m+1}}$ from the exceptional set of $n \les N$ does not exceed
\begin{equation} \label{for_E_3}
	2^{m+1} (DN)^{2^{m+1}} \left( \frac{1}{2^{\sigma-2}} + \frac{(\log N)^2}{N} \right) =: E_3^{2^{m+1}}.
\end{equation}

Finally, using the notation $\rho := \lambda - m \mu$, we get
\begin{multline} \label{bound_after_sec4}
	|S_0|^{2^{m+1}} \les \frac{(DN)^{2^{m+1}-1}}{R_0} \int_D^{2D} \max_{\beta_1, \beta_2 \ges 0} \sum_{r_0 \les R_0} \frac{1}{|K_1 (\alpha)|^2 \ldots |K_m (\alpha)|^2} \sum_{k_1, k_1' \in K_1 (\alpha)} \ldots \\
	\sum_{k_m, k_m' \in K_m (\alpha)}
	\biggl| \sum_{n \les N} e\biggl( \frac{1}{2} \sum_{\substack{\varepsilon_0, \ldots, \varepsilon_m \\ \in \{ 0, 1 \}}} \biggl( s_{\rho}\big(\floor{\alpha n + \beta_1 + \alpha \varepsilon_0 r_0 + \alpha \langle \bm{\varepsilon}, \bm{k} - \bm{k}' \rangle}\big) + \\ s_{\rho}\big(\floor{\gamma \alpha n + \beta_2 + \gamma \alpha \varepsilon_0 r_0 + \gamma \alpha \langle \bm{\varepsilon}, \bm{k} - \bm{k}' \rangle}\big) \biggr) \biggr) \biggr| d\alpha +
	E_1^{2^{m+1}} + E_2^{2^{m+1}} + E_3^{2^{m+1}}.
\end{multline}

\section{Proof of Proposition~\ref{prop3}: discrepancy for the sum over $n$}
\label{Koksma_on_n}

Due to the restriction on the rational approximations of $\gamma = (q/p)^c$ (see~\eqref{restrict_irrat}), the sequence $(\alpha n \bmod{1}, \gamma \alpha n \bmod{1})$ has a low discrepancy on the unit square. Using Koksma-Hlawka inequality we will approximate the sum over $n$ in~\eqref{bound_after_sec4} by an integral over~$[0, 1]^2$: 
\begin{multline*}
	\frac{1}{N} \sum_{n \les N} e\biggl( \frac{1}{2} \sum_{\substack{\varepsilon_0, \ldots, \varepsilon_m \\ \in \{ 0, 1 \}}}  \biggl( s_{\rho}\big(\floor{\alpha n + \beta_1 + \alpha \varepsilon_0 r_0 + \alpha \langle \bm{\varepsilon}, \bm{k} - \bm{k}' \rangle}\big) + \\ 
	s_{\rho}\big(\floor{\gamma \alpha n + \beta_2 + \gamma \alpha \varepsilon_0  r_0 + \gamma \alpha \langle \bm{\varepsilon}, \bm{k} - \bm{k}' \rangle}\big) \biggr) \biggr) \approx \\ 
	\int_0^1 \int_0^1 e\biggl( \frac{1}{2}  \sum_{\substack{\varepsilon_0, \ldots, \varepsilon_m \\ \in \{0, 1\}}} \biggl( g_{\rho} \bigl( x + \beta_1 + \frac{\alpha \varepsilon_0 r_0}{2^{\rho}} + \frac{\alpha}{2^{\rho}} \langle \bm{\varepsilon}, \bm{k} - \bm{k}' \rangle \bigr) + \\
	g_{\rho} \bigl(y + \beta_2 + \frac{\gamma \alpha \varepsilon_0  r_0}{2^{\rho}} + \frac{\gamma \alpha}{2^{\rho}} \langle \bm{\varepsilon}, \bm{k} - \bm{k}' \rangle \bigr) \biggr) \biggr) dxdy
\end{multline*} for a certain 1-periodic function $g_{\rho} (x)$ with a bounded total variation. Indeed, note that $s_{\rho} (\floor{t})$ can be expressed in terms of values of a piecewise continuous 1-periodic function $g_{\rho}(x)$. The values of $g_{\rho} (x)$ are also in $\{0, 1\}$ and depend only on the fractional part of $t / 2^{\rho}$:
$$
	s_{\rho} (\floor{t}) =: g_{\rho} \left( \frac{\floor{t}}{2^{\rho}} \right) = g_{\rho} \left( \frac{t}{2^{\rho}} \right).
$$ Then
$$
	s_{\rho} \bigl(\floor{\alpha n + \beta_1 + \alpha \varepsilon_0 r_0 + \alpha \langle \bm{\varepsilon}, \bm{k} - \bm{k}' \rangle}\bigr) = g_{\rho} \left( \frac{\alpha n + \beta_1 + \alpha \varepsilon_0 r_0 + \alpha \langle \bm{\varepsilon}, \bm{k} - \bm{k}' \rangle}{2^{\rho}} \right).
$$ 

To apply the Koksma-Hlawka inequality we need to introduce the concept of the \textit{Hardy-Krause total variation}. Let $f(\bm{x})$ be a function defined on the cube $[0, 1]^m$. We define a \textit{partition} $\bm{P}$ of $[0, 1]^m$ as the set of $m$ sequences $\eta_0^{(j)}, \eta_1^{(j)}, \ldots, \eta_{n_j}^{(j)}$ such that $0 = \eta_0^{(j)} \les \eta_1^{(j)} \les \ldots \les \eta_{n_j}^{(j)} = 1$ for all $j = 1, \ldots, m$, and the difference operator $\Delta_j$,
\begin{multline*}
	\Delta_j f (x^{(1)}, \ldots,  x^{(j-1)}, \eta_i^{(j)}, x^{(j+1)}, \ldots, x^{(m)}) = \\
	f (x^{(1)}, \ldots,  x^{(j-1)}, \eta_{i+1}^{(j)}, x^{(j+1)}, \ldots, x^{(m)}) -
	f (x^{(1)}, \ldots,  x^{(j-1)}, \eta_i^{(j)}, x^{(j+1)}, \ldots, x^{(m)})
\end{multline*} for $0 \les i < n_j$. Then the \textit{Vitali total variation} of $f(\bm{x})$ can be defined as
$$
	V^{(m)} (f) = \sup_{\bm{P}} \sum_{i_1 = 0}^{n_1 - 1} \ldots \sum_{i_m = 0}^{n_m - 1} \left| \Delta_1 \ldots \Delta_k f(\eta_{i_1}^{(1)}, \ldots, \eta_{i_m}^{(m)}) \right|,
$$ where the supremum is taken over all partitions of $[0, 1]^m$. If a given function does not depend on one of the variables, the Vitali total variation is not useful as it may not reflect the actual behavior of the function. A more complete picture is given by the \textit{Hardy-Krause total variation}, which reflects the behavior of $f(\bm{x})$ on all faces of $[0, 1]^m$. Let $\bm{I} = \{ l_1, \ldots, l_{|\bm{I}|} \} \subset \{1, \ldots, s \}$ be a subset of indices. Then the \textit{Hardy-Krause total variation} can be defined as
$$
	V_{HK}^{(m)} (f) = \sup_{\bm{I}} \sup_{\bm{P}} \sum_{i_1 = 0}^{n_1 - 1} \ldots \sum_{i_m = 0}^{n_m - 1} \left| \Delta_{l_1} \ldots \Delta_{l_{\bm{I}}} f(\eta_{i_1}^{(1)}, \ldots, \eta_{i_m}^{(m)}) \right|.
$$

Now we are ready to state the Koksma-Hlawka inequality. The proof of this version can be found in~\cite[Theorem~5.5]{KN74}):

\begin{lemma} \label{Koksma-Hlawka}
	Let $[0, 1]^m$ be the $m$-dimensional unit cube, and let $f(\bm{x})$ be a function with bounded Hardy-Krause total variation $V_{HK}^{(m)}(f)$ on $[0, 1]^m$. Then for any sequence $\bm{x}_1, \ldots, \bm{x}_N$ in $[0, 1]^m$ one has
	$$
		\biggl| \frac{1}{N} \sum_{i=1}^N f(\bm{x}_i) - \int_{[0, 1]^m} f(\bm{u}) d\bm{u} \biggr| \les V_{HK}^{(m)} (f) D_N (\bm{x}).
	$$
\end{lemma}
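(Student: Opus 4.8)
The plan is to follow the classical Hlawka argument: express the integration error as a Stieltjes integral of the local discrepancy function against the ``derivative measure'' of $f$, and then estimate face by face. First I would fix notation: for a nonempty $\bm{I} = \{ l_1 < \dots < l_j \} \subseteq \{1,\dots,m\}$ write $\bm{u}_{\bm{I}}$ for the corresponding block of coordinates and $(\bm{u}_{\bm{I}}, \bm{1})$ for the point of $[0,1]^m$ which equals $\bm{u}_{\bm{I}}$ on $\bm{I}$ and $1$ on the complementary coordinates, and set
$$
	\Delta_{\bm{I}} (\bm{u}_{\bm{I}}) := \frac{1}{N} \# \bigl\{ i \les N : (\bm{x}_i)_l < u_l \text{ for all } l \in \bm{I} \bigr\} - \prod_{l \in \bm{I}} u_l.
$$
Since the preimage of an anchored box in the $\bm{I}$-coordinates under the coordinate projection is a subinterval of $[0,1]^m$ with the same count and the same volume, one has $\sup_{\bm{u}_{\bm{I}}} |\Delta_{\bm{I}}(\bm{u}_{\bm{I}})| \les D_N(\bm{x})$ for every $\bm{I}$; this is the only place the definition of $D_N$ enters.

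The analytic core is the multidimensional integration-by-parts identity
$$
	\frac{1}{N} \sum_{i=1}^N f(\bm{x}_i) - \int_{[0,1]^m} f(\bm{u}) \dd \bm{u} = \sum_{\emptyset \neq \bm{I} \subseteq \{1,\dots,m\}} (-1)^{|\bm{I}|} \int_{[0,1]^{\bm{I}}} \Delta_{\bm{I}} (\bm{u}_{\bm{I}}) \dd_{\bm{u}_{\bm{I}}} f(\bm{u}_{\bm{I}}, \bm{1}),
$$
which I would prove by induction on $m$. The case $m=1$ is ordinary Riemann--Stieltjes integration by parts applied to $\int_0^1 \Delta_{\{1\}}(u) \dd f(u)$, with the boundary terms vanishing since $\Delta_{\{1\}}(0) = \Delta_{\{1\}}(1) = 0$ under the half-open endpoint convention. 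For the inductive step one freezes the last variable, applies the one-dimensional formula in that variable, and then applies the inductive hypothesis to the two functions of $m-1$ variables that appear, namely $f(\cdot, 1)$ and the slice integrated against $\dd u_m$, organizing the index sets $\bm{I}$ according to whether they contain $m$. The prerequisite is that a function of bounded Hardy--Krause variation induces a genuine finite signed measure $\dd^{(j)} f$ on each upper face of the cube, so that the iterated Stieltjes integrals are well defined; this is standard but is the fussiest technical point.

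Given the identity, the estimate is immediate: for each $\bm{I}$,
$$
	\biggl| \int_{[0,1]^{\bm{I}}} \Delta_{\bm{I}} (\bm{u}_{\bm{I}}) \dd_{\bm{u}_{\bm{I}}} f(\bm{u}_{\bm{I}}, \bm{1}) \biggr| \les \Bigl( \sup_{\bm{u}_{\bm{I}}} |\Delta_{\bm{I}}| \Bigr) V^{(|\bm{I}|)} \bigl( f(\cdot, \bm{1}) \bigr) \les D_N(\bm{x})\, V^{(|\bm{I}|)} \bigl( f(\cdot, \bm{1}) \bigr),
$$
using the sup bound above together with the identification of $V^{(|\bm{I}|)}(f(\cdot,\bm{1}))$ as the total mass of $\dd^{(|\bm{I}|)}f$ on the face. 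Summing over all nonempty $\bm{I}$ and using that the sum $\sum_{\bm{I}} V^{(|\bm{I}|)}(f(\cdot,\bm{1}))$ of the Vitali variations of the restrictions of $f$ to the upper faces of the cube is exactly the Hardy--Krause variation $V_{HK}^{(m)}(f)$ appearing in the lemma gives $\bigl| \frac{1}{N}\sum_i f(\bm{x}_i) - \int_{[0,1]^m} f \bigr| \les V_{HK}^{(m)}(f) D_N(\bm{x})$.

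The main obstacle is the rigorous justification of the integration-by-parts identity for a merely bounded-variation $f$: constructing the derivative measures on all faces and carrying out the iterated integration by parts without sign or endpoint errors. An alternative that sidesteps the measure-theoretic bookkeeping is to verify the inequality first for finite linear combinations of indicators of anchored boxes $[\bm{0}, \bm{a})$, where it is trivial because $\frac{1}{N}\sum_i \mathbbm{1}_{[\bm{0},\bm{a})}(\bm{x}_i) - \prod_l a_l = \Delta_{\{1,\dots,m\}}(\bm{a})$, and then to approximate a general $f$ of bounded Hardy--Krause variation by such combinations with the approximation error controlled by $V_{HK}^{(m)}(f)$; but this reduction is essentially as delicate, so I would still prefer the integration-by-parts route.
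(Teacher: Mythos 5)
The paper does not prove this lemma at all: it is quoted as a known result, with an explicit pointer to \cite[Theorem~5.5]{KN74}, so there is no in-paper proof to compare against. Your proposal is the classical Hlawka argument (Zaremba's integration-by-parts identity, face-by-face estimation), and it is the same argument that appears in Kuipers--Niederreiter, so as a blind reconstruction it is entirely appropriate. The technical steps you flag as ``fussiest'' --- that a function of bounded Hardy--Krause variation induces finite signed measures on every upper face, and the iterated Stieltjes integration by parts --- are precisely the points that KN74 handles carefully, and your outline of the induction on $m$ with the base case $m=1$ and the split of index sets $\bm{I}$ by whether they contain $m$ is the right skeleton.

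One caution worth raising, not about your proof but about the paper's surrounding text: the displayed definition of $V_{HK}^{(m)}(f)$ in the paper takes a \emph{supremum} over index sets $\bm{I}$ (and sums over all $m$ partition indices rather than just those in $\bm{I}$), which does not literally match the quantity your proof produces, namely $\sum_{\emptyset\neq\bm{I}} V^{(|\bm{I}|)}(f(\cdot,\bm{1}))$ --- the sum over all upper faces, which is the standard Hardy--Krause variation and the one under which the inequality holds with constant $1$. Taken at face value the paper's $\sup_{\bm{I}}$ version is smaller (and the sum-over-all-$i_j$ part would even be unbounded for proper $\bm{I}$ once one optimizes the partition), so the inequality as stated would fail with the paper's printed $V_{HK}$. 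This is surely a slip in the paper's definition rather than a problem with the lemma: every application in the paper only uses a crude upper bound $V_{HK}^{(2)}(f)\ll 2^{m+\rho}$, for which either reading suffices. But you should make explicit in your write-up that you are using the ``sum over faces'' definition, consistent with KN74, and note the mismatch so a reader does not try to reconcile your last line with the paper's displayed formula.

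Finally, a small detail to tidy: your claim $\sup_{\bm{u}_{\bm{I}}}|\Delta_{\bm{I}}(\bm{u}_{\bm{I}})|\les D_N(\bm{x})$ is correct because the preimage of an anchored box under the coordinate projection is a subinterval of $[0,1]^m$ of the same measure, and the paper's $D_N$ is the extreme (all-intervals) discrepancy; with the star discrepancy $D_N^*$ the same bound holds directly. Since $D_N^*\les D_N$, stating the lemma with $D_N$ is a weakening of the sharp Koksma--Hlawka inequality, so your argument actually proves slightly more than what is claimed.
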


We apply Lemma~\ref{Koksma-Hlawka} with $s=2$ and
\begin{multline*}
	f(\bm{x}_n) = f\left( \frac{\alpha n}{2^{\rho}}, \frac{\gamma \alpha n}{2^{\rho}}  \right) :=
	e\biggl( \frac{1}{2} \sum_{\substack{\varepsilon_0, \ldots, \varepsilon_m \\ \in \{ 0, 1 \}}} \biggl( g_{\rho} \left(\frac{\alpha n + \beta_1 + \alpha \varepsilon_0 r_0 + \alpha \langle \bm{\varepsilon}, \bm{k} - \bm{k}' \rangle}{2^{\rho}}\right) + \\
	g_{\rho} \left(\frac{\gamma \alpha n + \beta_2 + \gamma \alpha \varepsilon_0 r_0 + \gamma \alpha \langle \bm{\varepsilon}, \bm{k} - \bm{k}' \rangle}{2^{\rho}}\right) \biggr) \biggr).
\end{multline*} The total variation of $f$ can be crudely estimated as
$$
	V_{HK}^{(2)}(f) \ll 2^{m+\rho},
$$ since the function $g_{\rho}$ changes the sign $2^{\rho}$ times in $[0, 1]$, and there are $2^{m+1}$ different vectors $\bm{\varepsilon}$. So it only remains to evaluate the discrepancy $D_N (\alpha n / 2^{\rho}, \gamma \alpha n / 2^{\rho})$.

In the next three sections we will deal with various discrepancies. To estimate them, we need upper bounds for the averages of linear exponential sums. They are given in the following lemma:

\begin{lemma} \label{expsums_estimates}
	Let $D, N, t$ be real numbers such that $D, N > 10$, and $D|t| \ges N^{-1}$. Then one has
	$$
		W := \int_D^{2D} \biggl| \sum_{n \les N} e\bigl( \alpha t n \bigr) \biggr| d\alpha \ll \max \left( D \log N, \frac{1}{|t|} \right).	
	$$  
\end{lemma}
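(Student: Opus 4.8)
The strategy is to majorize the inner sum by the standard geometric‑series bound $\ll \min(N,\norm{\alpha t}^{-1})$, then integrate this $1$-periodic majorant over $[D,2D]$, distinguishing the regime where $D|t|$ is at least a fixed constant from the regime where it is small.

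First I would reduce to $t>0$: the modulus $\bigl|\sum_{n\les N}e(\alpha t n)\bigr|$ is unchanged under complex conjugation, which replaces $t$ by $-t$, and both the hypothesis $D|t|\ges N^{-1}$ and the claimed estimate depend on $t$ only through $|t|$; moreover $t\neq 0$ since $D|t|\ges N^{-1}>0$. Summing the geometric progression and using $|\sin\pi x|\ges 2\norm{x}$ gives $\bigl|\sum_{1\les n\les N}e(\alpha t n)\bigr|\les\frac1{2\norm{\alpha t}}$ whenever $\alpha t\notin\Z$, while the trivial bound $\les\floor{N}\les N$ holds always; hence $\bigl|\sum_{n\les N}e(\alpha t n)\bigr|\les\min(N,\norm{\alpha t}^{-1})$ outside a set of $\alpha$ of measure zero. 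Substituting $u=\alpha t$ then yields
$$
	W\les\int_D^{2D}\min\bigl(N,\norm{\alpha t}^{-1}\bigr)\dd\alpha=\frac1t\int_{Dt}^{2Dt}\min\bigl(N,\norm{u}^{-1}\bigr)\dd u,
$$
and I would record the elementary one-period bound, valid for every integer $j$ because $N>10$,
$$
	\int_j^{j+1}\min\bigl(N,\norm{u}^{-1}\bigr)\dd u=2\int_0^{1/2}\min(N,u^{-1})\dd u=2\bigl(1-\log 2+\log N\bigr)\ll\log N.
$$

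Now for the case split. If $Dt\ges\tfrac14$, the interval $[Dt,2Dt]$ has length $Dt$ and hence meets at most $Dt+2\les 9\,Dt$ of the unit intervals $[j,j+1]$, so the integral over $[Dt,2Dt]$ is $\ll Dt\log N$ and therefore $W\ll D\log N$. If $Dt<\tfrac14$, then $2Dt<\tfrac12$, so $[Dt,2Dt]\subset(0,\tfrac12)$, where $\norm{u}=u$; the hypothesis gives $u\ges Dt\ges N^{-1}$ throughout this interval, so $\min(N,\norm{u}^{-1})=u^{-1}$ and
$$
	W\les\frac1t\int_{Dt}^{2Dt}\frac{\dd u}{u}=\frac{\log 2}{t}\ll\frac1{|t|}.
$$
In either case $W\ll\max\bigl(D\log N,1/|t|\bigr)$, which is the assertion.

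The computation is routine; the only subtlety, and the sole place the hypothesis $D|t|\ges N^{-1}$ is used, is the small-$Dt$ case: a crude covering of $[Dt,2Dt]$ by unit intervals would lose a factor of $\log N$ there, so one must exploit that this short interval lies in a single fundamental domain, separated from $0$ (hence from all integers) by $N^{-1}$, in order to obtain the sharp $1/|t|$ rather than $(\log N)/|t|$. I do not foresee any further difficulty.
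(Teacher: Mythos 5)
Your proof is correct and follows essentially the same route as the paper's: bound the inner sum by $\min(N,\norm{\alpha t}^{-1})$, substitute $u=\alpha t$, and split according to whether $D|t|$ is bounded below by an absolute constant or not. The only (inessential) difference is a matter of bookkeeping—you note that the hypothesis $D|t|\ges N^{-1}$ forces $\min(N,u^{-1})=u^{-1}$ on the short interval, whereas the paper simply uses the one-sided bound $\min(N,u^{-1})\les u^{-1}$, which already gives $\int_{Dt}^{2Dt}u^{-1}\dd u=\log 2$ without invoking that hypothesis at all.
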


\begin{proof}
	By the linear Weyl's bound, 
	$$
		W \les \int_D^{2D} \min \left( N, \frac{1}{\norm{\alpha t}} \right)	d\alpha.
	$$ Substituting $u = \alpha t$, we get
	$$
		W \les \frac{1}{t} \int_{Dt}^{2Dt} \min \left(N, \frac{1}{\norm{u}} \right)du.
	$$ If $2D|t| > 1/2$, then
	$$
		W \ll \frac{1}{t} Dt \int_0^{1/2} \min \left(N, \frac{1}{u} \right) du \ll D \log N.
	$$ Otherwise,
	$$
		W \ll \frac{1}{t} \int_{Dt}^{2Dt} \frac{1}{u} du \ll \frac{1}{|t|}
	$$ as desired.
\end{proof}

By Lemma~\ref{Erdos-Turan-Koksma},
\begin{equation} \label{discrepancy}
	D_N \left( \frac{\alpha n}{2^{\rho}}, \frac{\gamma \alpha n}{2^{\rho}}  \right) \les C \biggl( \frac{1}{H} + \sum_{\substack{|h_1|, |h_2| \les H \\ (h_1, h_2) \neq (0, 0)}} \frac{1}{r(\bm{h})} \biggl| \frac{1}{N} \sum_{n \les N} e\left( h_1 \frac{\alpha n}{2^{\rho}} + h_2 \frac{\gamma \alpha n}{2^{\rho}} \right)  \biggr| \biggr)
\end{equation} for any positive integer $H$. Thus, the first term in the discrepancy estimate~\eqref{discrepancy} contributes to the bound~\eqref{bound_after_sec4} for $|S_0|^{2^{m+1}}$ at most
\begin{equation} \label{for_E_4}
	E_4^{2^{m+1}} := (DN)^{2^{m+1}-1} 2^{m+\rho} \frac{DN}{H}.
\end{equation} To estimate the contribution from the second term, we apply Lemma~\ref{expsums_estimates} ty the sum over $n \les N$ with $t = (h_1 + \gamma h_2) / 2^{\rho}$. Then the contribution to $|S_0|^{2^{m+1}}$ is bounded by
$$
	(DN)^{2^{m+1}-1} 2^{m+\rho} \sum_{\substack{|h_1|, |h_2| \les H \\ (h_1, h_2) \neq (0,0)}} \frac{1}{r(\bm{h})} \max \left( D \log N, \frac{2^{\rho}}{|h_1 + \gamma h_2|} \right).
$$ By the assumption~\eqref{restrict_irrat},
$$
	\left| \gamma - \frac{h_1}{h_2} \right| \ges N^{-1/2}
$$ for any $|h_2| \les N^{\theta_1}$, which clearly implies $|h_1 + \gamma h_2| \ges N^{-1/2}$. Since $D$ is larger than $N$, and $2^{\rho}$ is much smaller than $N$, the contribution from the second term on the right side of~\eqref{discrepancy} is estimated as
\begin{equation} \label{for_E_5}
	E_5^{2^{m+1}} := (DN)^{2^{m+1}} \frac{2^{m+\rho} (\log N)^3}{N}
\end{equation} as soon as $H \ll N^{\theta_1}$. Thus, 
\begin{multline*}
	|S_0|^{2^{m+1}} \les \frac{N(DN)^{2^{m+1}-1}}{R_0} \int_D^{2D} \max_{\beta_1, \beta_2 \ges 0} \sum_{r_0 \les R_0} \frac{1}{|K_1 (\alpha)|^2 \ldots |K_m (\alpha)|^2} \sum_{k_1, k_1' \in K_1 (\alpha)} \ldots \\ 
	\sum_{k_m, k_m' \in K_m (\alpha)}
	\biggl| \int_0^1 \int_0^1 e\biggl( \frac{1}{2}  \sum_{\substack{\varepsilon_0, \ldots, \varepsilon_m \\ \in \{0, 1\}}} \biggl( g_{\rho} \bigl( x + \beta_1 + \frac{\alpha \varepsilon_0 r_0}{2^{\rho}} + \frac{\alpha}{2^{\rho}} \langle \bm{\varepsilon}, \bm{k} - \bm{k}' \rangle \bigr) + \\
	g_{\rho} \bigl(y + \beta_2 + \frac{\gamma \alpha \varepsilon_0  r_0}{2^{\rho}} + \frac{\gamma \alpha}{2^{\rho}} \langle \bm{\varepsilon}, \bm{k} - \bm{k}' \rangle \bigr) \biggr) \biggr) dxdy \biggr| d\alpha + \sum_{j=1}^5 E_j^{2^{m+1}}.
\end{multline*}

Now since the variables $x$ and $y$ are independent, one can get rid of one of the factors $g_{\rho}(x+\ldots)$, $g_{\rho}(y+\ldots)$ by the Cauchy inequality:
\begin{multline*}
	|S_0|^{2^{m+2}} \les \frac{N (DN)^{2^{m+2}-1}}{R_0} \int_D^{2D} \max_{\beta \ges 0} \sum_{r_0 \les R_0} \frac{1}{|K_1 (\alpha)|^2 \ldots |K_m (\alpha)|^2} \sum_{k_1, k_1' \in K_1 (\alpha)} \ldots \\
	\sum_{k_m, k_m' \in K_m (\alpha)}
	\biggl| \int_0^1 e\biggl( \frac{1}{2}  \sum_{\substack{\varepsilon_0, \ldots, \varepsilon_m \\ \in \{0, 1\}}} g_{\rho} \bigl( x + \beta + \frac{\alpha \varepsilon_0 r_0}{2^{\rho}} + \frac{\alpha}{2^{\rho}} \langle \bm{\varepsilon}, \bm{k} - \bm{k}' \rangle \bigr) \biggr) dx \biggr|^2 d\alpha + \sum_{j=1}^5 E_j^{2^{m+2}}.
\end{multline*} Opening the square and changing the order of summation and integration, we obtain
\begin{multline*}
	|S_0|^{2^{m+2}} \ll \frac{N (DN)^{2^{m+2}-1}}{R_0} \int_D^{2D} \max_{\beta \ges 0} \int_0^1 \int_0^1 \sum_{r_0 \les R_0} \frac{1}{|K_1 (\alpha)|^2 \ldots |K_m (\alpha)|^2} \\
	\sum_{k_1, k_1' \in K_1 (\alpha)} \ldots \sum_{k_m, k_m' \in K_m (\alpha)} e\biggl( \frac{1}{2}  \sum_{\substack{\varepsilon_0, \ldots, \varepsilon_m \\ \in \{0, 1\}}} g_{\rho} \bigl( x + \beta + \frac{\alpha \varepsilon_0 r_0}{2^{\rho}} + \frac{\alpha}{2^{\rho}} \langle \bm{\varepsilon}, \bm{k} - \bm{k}' \rangle \bigr) + \\
	g_{\rho} \bigl( y + \frac{\alpha \varepsilon_0 r_0}{2^{\rho}} + \frac{\alpha}{2^{\rho}} \langle \bm{\varepsilon}, \bm{k} - \bm{k}' \rangle \bigr) \biggr) dx dy d\alpha + \sum_{j=1}^5 E_j^{2^{m+2}},
\end{multline*} which gives
\begin{multline*}
	|S_0|^{2^{m+2}} \ll \frac{N (DN)^{2^{m+2}-1}}{R_0} \int_D^{2D} \max_{\beta \ges 0} \int_0^1 \int_0^1 \sum_{r_0 \les R_0} \frac{1}{|K_1 (\alpha)|^2 \ldots |K_m (\alpha)|^2} \\ 
	\sum_{k_1, k_1' \in K_1 (\alpha)} \ldots \sum_{k_m, k_m' \in K_m (\alpha)} e\biggl( \frac{1}{2}  \sum_{\substack{\varepsilon', \varepsilon_0, \ldots, \varepsilon_m \\ \in \{ 0, 1 \}}} g_{\rho} \bigl(x + \beta + \varepsilon'y + \frac{\varepsilon_0 \alpha r_0}{2^{\rho}} + \frac{\alpha}{2^{\rho}} \langle \bm{\varepsilon}, \bm{k} - \bm{k}' \rangle \bigr) \biggr) dx dy d\alpha + \\
	\sum_{j=1}^5 E_j^{2^{m+2}}.
\end{multline*}

\section{Proof of Proposition~\ref{prop3}: discrepancy for the sums over $r_0$ and most of $k_i \in K_i (\alpha)$}
\label{Koksma_on_k_first}

We apply a similar strategy to the sums over $r_0 \les R_0, \ k_1 \in K_1 (\alpha), \ldots, k_{m-l} \in K_{m-l} (\alpha)$, where the integer $l$ is much smaller than $m$. The precise value will be chosen at the end of the proof. Since we have removed the factor containing $\gamma$, we only need a 1-dimensional version of the Koksma-Hlawka inequality.

We start with the sum over $r_0 \les R_0$. By Lemma~\ref{Koksma-Hlawka},
$$
	\biggl| \frac{1}{R_0} \sum_{r_0 \les R_0} f_{\rho} \left( \frac{\alpha r_0}{2^{\rho}} \right) - \int_0^1 f_{\rho}(x_0) dx_0 \biggr| \ll 2^{m + \rho} D_{R_0} \left( \frac{\alpha r_0}{2^{\rho}} \right),
$$ By Lemma~\ref{mean-square-discr} the contribution from this error term to $|S_0|^{2^{m+2}}$ is bounded by
\begin{equation} \label{for_F_0}
	F_0^{2^{m+2}} := (DN)^{2^{m+2}} \frac{2^{m + \rho}}{R_0} (\log N)^2
\end{equation} under the restriction $R_0 \ll N$.

Treating the sums over $k_1, \ldots, k_{m-l}$ is a little more tricky as one needs to evaluate the discrepancy among a more complicated subsequence. We first state a more precise version of Erd\"{o}s-Tur\'{a}n theorem for 1-dimensional case:

\begin{lemma} \label{Vinogradov_cup}
	Let $a$ and $b$  be real numbers such that $0 \les a < b \les 1$. Then the indicator function of the interval $[a, b]$ admits the expansion
	$$
		\mathbbm{1}_{[a, b]} (x) = (b-a) + \frac{\Theta}{H} + \sum_{0 < |h| \les H} \frac{\Theta_h}{|h|} e\bigl( hx \bigr)
	$$ and all large enough $H > 0$. Here $\Theta, \Theta_h$ are some complex numbers such that $|\Theta|, |\Theta_h|$ are bounded by an absolute constant.  
\end{lemma}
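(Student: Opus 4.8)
The statement is the classical \emph{Vinogradov small-cups / Vaaler} lemma, and I would prove it by approximating the indicator $\mathbbm{1}_{[a,b]}$ from above and below by trigonometric polynomials and comparing Fourier coefficients. First I would recall the Beurling--Selberg majorant/minorant construction: for any $H$ there exist trigonometric polynomials $\psi^{\pm}_H$ of degree $\le H$ with $\psi^-_H(x) \le \mathbbm{1}_{[a,b]}(x) \le \psi^+_H(x)$ for all $x \in \T$, whose zeroth Fourier coefficients satisfy $\widehat{\psi^{\pm}_H}(0) = (b-a) \pm \frac{1}{H+1}$, and whose higher Fourier coefficients obey $|\widehat{\psi^{\pm}_H}(h) - \widehat{\mathbbm{1}_{[a,b]}}(h)| \le \frac{1}{H+1}$ for $1 \le |h| \le H$. (This is \cite[Chapter~2]{KN74} or Vaaler's paper; I would simply cite it.) Then I would write $\mathbbm{1}_{[a,b]}(x) = \tfrac12(\psi^+_H + \psi^-_H)(x) + E(x)$ where $|E(x)| \le \tfrac12(\psi^+_H - \psi^-_H)(x)$, a nonnegative trigonometric polynomial of degree $\le H$ with average $\le \frac{1}{H+1}$.

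The key step is bookkeeping the error into the two claimed forms. Since $\widehat{\mathbbm{1}_{[a,b]}}(h) = \frac{e(-ha) - e(-hb)}{2\pi i h}$ for $h \ne 0$, we have $|\widehat{\mathbbm{1}_{[a,b]}}(h)| \le \frac{1}{\pi |h|} \le \frac{1}{|h|}$, and the averaged polynomial $\tfrac12(\psi^+_H+\psi^-_H)$ has $h$-th coefficient within $\frac{1}{H+1} \le \frac{1}{|h|}$ of this for $1 \le |h| \le H$; so its $h$-th coefficient is $\frac{\Theta_h}{|h|}$ with $|\Theta_h| \le 2 + \frac{1}{\pi}$, bounded absolutely. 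Its zeroth coefficient is exactly $b-a$. Thus $\tfrac12(\psi^+_H+\psi^-_H)(x) = (b-a) + \sum_{0 < |h| \le H} \frac{\Theta_h}{|h|} e(hx)$. It remains to absorb $E(x)$ into the $\frac{\Theta}{H}$ term: but $E(x)$ is \emph{not} constant, so strictly one must instead fold $E$ into the $\Theta_h$ coefficients as well. The cleanest route is: the polynomial $\tfrac12(\psi^+_H - \psi^-_H)$ is nonnegative with zeroth coefficient $\le \frac{1}{H+1}$, hence by Fej\'er--Riesz (or just positivity) each of its coefficients has modulus $\le \frac{1}{H+1}$; write $E(x) = \sum_{|h| \le H} e_h e(hx)$ with $|e_h| \le \frac{1}{H+1} \le \frac{1}{H}$. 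Then $e_0 = \Theta/H$ with $|\Theta| \le 1$, and for $1\le |h| \le H$ we replace $\Theta_h$ by $\Theta_h + \frac{|h|}{H}\cdot(\text{something of modulus} \le 1)$, still absolutely bounded since $|h|/H \le 1$. Collecting terms gives exactly the claimed expansion.

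The main obstacle — really the only subtlety — is that the paper's statement lumps the degree-zero error into $\frac{\Theta}{H}$ and the nonzero-frequency errors into $\frac{\Theta_h}{|h|}$, whereas the Beurling--Selberg error $E(x)$ naturally distributes across all frequencies $|h| \le H$; one has to check that each piece of $E$ at frequency $h \ne 0$, of size $\le \frac{1}{H}$, can be rewritten as $\frac{c_h}{|h|}$ with $|c_h| = |h|/H \cdot |e_h| \cdot H \le 1$ bounded. This is routine once stated correctly. I would present the argument compactly: cite the majorant/minorant lemma, compute $\widehat{\mathbbm{1}_{[a,b]}}$, and do the two-line coefficient comparison, noting that all implied constants are absolute (independent of $a$, $b$, $H$).
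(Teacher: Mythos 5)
Your proposal is correct and follows exactly the route the paper itself indicates: the paper gives no proof but simply cites Vaaler polynomials \cite[Theorem~A.6]{Graham_Kolesnik} (equivalently, the Beurling--Selberg construction) or Vinogradov's cups \cite[Chapter~1]{Karatsuba} as standard, and your sketch carries out the Vaaler/Beurling--Selberg version with the right bookkeeping, in particular correctly noting that the nonnegative remainder $\tfrac12(\psi^+_H-\psi^-_H)$ has all Fourier coefficients bounded by its mean $\le 1/(H+1)$, so the frequency-$h$ piece of the error can be rewritten as $c_h/|h|$ with $|c_h|\le |h|/(H+1)\le 1$.
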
 The proof of this lemma is standard, and can be done, for example, via Vaaler polynomials~\cite[Theorem~A.6]{Graham_Kolesnik} or ``Vinogradov cups''~\cite[Chapter~1]{Karatsuba}.  

Applying Lemma~\ref{Koksma-Hlawka}, we get
$$
	\biggl| \frac{1}{|K_i (\alpha)|} \sum_{k_i \in K_i(\alpha)} f_{\rho }\left( \frac{\alpha k_i}{2^{\rho}} \right) - \int_0^1 f_{\rho} (x_i) dx_i \biggr| \ll 2^{m + \rho} D_{B, K_i} \left( \frac{\alpha k_i}{2^{\rho}} \right).
$$ Next, by Lemma~\ref{Erdos-Turan-Koksma},
\begin{equation} \label{after-er-tur-kok}
	D_{B, K_i} \left( \frac{\alpha k_i}{2^{\rho}} \right) \les \frac{1}{H_i} + \frac{1}{|K_i (\alpha)|} \sum_{0 < |h_i| \les H_i} \frac{1}{|h_i|} \biggl| \sum_{k_i \in K_i (\alpha)} e\left( h_i \frac{\alpha k_i}{2^{\rho}} \right) \biggr|. 
\end{equation} The contribution to $|S_0|^{2^{m+2}}$ from the first term on the right side of the last inequality can be estimated as
\begin{equation} \label{for_F_i_0}
	F_{i,0}^{2^{m+2}} := (DN)^{2^{m+2}} \frac{2^{m + \rho}}{H_i}.
\end{equation} To evaluate the contribution from the second term
we first separate $\alpha$'s for which $|K_i (\alpha)|$ is small. In this case we apply a trivial bound to the sum over $k_i \in K_i (\alpha)$. Fortunately, for most $D \les \alpha < 2D$ the number $|K_i (\alpha)|$ is close to its expected size $B / 2^{2\mu + 2\sigma}$. By definition of $K_i (\alpha)$ and Lemma~\ref{Vinogradov_cup},
\begin{multline*}
	|K_i (\alpha)| = \sum_{k_i \les B} \mathbbm{1} \left( \left\{  \frac{\alpha k_i}{2^{\lambda_i}} \right\} < \frac{1}{2^{\mu + \sigma}} \right) \mathbbm{1} \left( \left\{  \frac{\gamma \alpha k_i}{2^{\lambda_i}} \right\} < \frac{1}{2^{\mu + \sigma}} \right) = \\
	\sum_{k_i \les B} \biggl( \frac{1}{2^{\mu + \sigma}} + \frac{\Theta}{\tilde H_i} + \sum_{0 < |h_1| \les \tilde H_i} \frac{\Theta_{h_1}}{|h_1|} e\biggl( h_1 \frac{\alpha k_i}{2^{\lambda_i}} \biggr) \biggr) \biggl( 	\frac{1}{2^{\mu + \sigma}} + \frac{\Theta}{\tilde H_i} + \sum_{0 < |h_2| \les \tilde H_i} \frac{\Theta_{h_2}}{|h_2|} e\biggl( h_2 \frac{\gamma \alpha k_i}{2^{\lambda_i}} \biggr) \biggr),
\end{multline*} where $\lambda_i := \lambda - (i-1)\mu$. Hence,
\begin{equation} \label{size_K_i}
	|K_i (\alpha)| = \frac{B}{2^{2\mu + 2\sigma}} + \frac{2\Theta B}{2^{\mu + \sigma} \tilde H_i} + \frac{ \Theta^2 B}{\tilde H_i^2} + O\biggl( \frac{B}{\tilde H_i} \sum_{0 < |h| \les \tilde H_i} \frac{1}{|h|} \biggr) + R_1 + R_2 + R_3, 
\end{equation} where
\begin{gather*}
	R_1 := \frac{1}{2^{\mu + \sigma}} \sum_{0 < |h_1| \les \tilde H_i} \frac{\Theta_{h_1}}{|h_1|} \sum_{k_i \les B} e\biggl( h_1 \frac{\alpha k_i}{2^{\lambda_i}} \biggr), \\	R_2 := \frac{1}{2^{\mu + \sigma}} \sum_{0 < |h_2| \les \tilde H_i} \frac{\Theta_{h_2}}{|h_2|} \sum_{k_i \les B} e\biggl( h_2 \frac{\gamma \alpha k_i}{2^{\lambda_i}} \biggr), \\
	R_3 := \sum_{0 < |h_1|, |h_2| \les \tilde H_i} \frac{\Theta_{h_1} \Theta_{h_2}}{|h_1| |h_2|} \sum_{k_i \les B} e\biggl( h_1 \frac{\alpha k_i}{2^{\lambda_i}} + h_2 \frac{\gamma \alpha k_i}{2^{\lambda_i}} \biggr).
\end{gather*} The second, third, and fourth terms on the right side of~\eqref{size_K_i} are negligible if $\tilde H_i \gg 2^{2(\mu + \sigma)(1+\varepsilon)}$ for some $\varepsilon > 0$. So it is enough to show that $R_1, R_2, R_3 \ll B / 2^{2(\mu + \sigma)(1 + o(1))}$ for most~$\alpha$. By Chebyshev's inequality,
\begin{multline*}
	\mathcal{M}_j := \text{meas} \left\{ D \les \alpha < 2D: |R_j (\alpha)| > \frac{B}{2^{2(\mu + \sigma)(1+o(1))}} \right\} \ll \\
	\frac{2^{2(\mu + \sigma)(1 + o(1))}}{B} \int_D^{2D} |R_j (\alpha)| d\alpha
\end{multline*} for $j = 1, 2, 3$. Then, by Lemma~\ref{expsums_estimates},
\begin{multline*}
	\mathcal{M}_1, \mathcal{M}_2 \ll 
	D \frac{2^{(\mu + \sigma)(1+o(1))}}{B} \sum_{0 < |h| \les \tilde H_i} \frac{1}{|h|} \max \left( \log N,  \frac{2^{\lambda_i}}{D |h|} \right) \ll \\
	D \frac{2^{(\mu + \sigma)(1+o(1))}}{B} \frac{2^{\lambda}}{D} \log \tilde H_i,
\end{multline*} and
\begin{multline*}
	\mathcal{M}_3 \ll D \frac{2^{2(\mu + \sigma)(1+o(1))}}{B} \sum_{0 < |h_1|, |h_2| \les \tilde H_i} \frac{1}{|h_1| |h_2|} \max \left( \log N, \frac{2^{\lambda_i}}{D|h_1 + \gamma h_2|} \right) \ll \\
	D \frac{2^{2(\mu + \sigma)(1+o(1))}}{B} \frac{2^{\lambda} N^{1/2}}{D} (\log \tilde H_i)^2.
\end{multline*} These estimates are valid as soon as  $2^{\lambda} \gg DN^{\varepsilon}$, $\tilde H_i \ll N^{\theta_1}$. Then, by the union bound,
\begin{multline} \label{meas_total}
	\mathcal{M} := \text{meas} \left\{ D \les \alpha < 2D : |K_i (\alpha)| < \frac{B}{2^{2(\mu + \sigma)(1 + o(1))}} \right\} \ll \\
	D \frac{2^{2(\mu + \sigma)(1+o(1))}}{B} \frac{2^{\lambda} N^{1/2}}{D} (\log N)^2.
\end{multline} Thus, it is enough to require
$$
	BD \gg N^{1/2} 2^{\lambda + 10 (\mu + \sigma)}, \qquad 2^{3\mu + 3\sigma} \ll \tilde H_i \ll N^{\theta_1}.
$$

Now assuming $\alpha \in \mathcal{M}$ we estimate the sum over $k_i \in K_i (\alpha)$ in~\eqref{after-er-tur-kok} trivially by its length. Then from~\eqref{meas_total} the contribution to $|S_0|^{2^{m+2}}$ from $\alpha \in \mathcal{M}$ is bounded by
\begin{equation} \label{for_F_i_exc}
	F_{i,-1}^{2^{m+2}} :=
	(DN)^{2^{m+2}} 2^{m + \rho} \frac{2^{2(\mu + \sigma)}}{B} \frac{N^{1/2} 2^{\lambda}}{D} (\log N)^3
\end{equation} as soon as $H_i \ll N$. 

Next, we estimate the sum over $k_i \in K_i (\alpha)$ in~\eqref{after-er-tur-kok} for $\alpha \in [D, 2D) \setminus \mathcal{M}$. We rewrite the inner sum as
$$
	\frac{1}{|K_i (\alpha)|} \sum_{k_i \les B} \mathbbm{1} \left( \left\{  \frac{\alpha k_i}{2^{\lambda_i}} \right\} < \frac{1}{2^{\mu + \sigma}} \right) \mathbbm{1} \left( \left\{  \frac{\gamma \alpha k_i}{2^{\lambda_i}} \right\} < \frac{1}{2^{\mu + \sigma}} \right) e\left( h_i \frac{\alpha k_i}{2^{\rho}} \right).
$$ Applying Lemma~\ref{Vinogradov_cup} to each of the factors, we get
\begin{multline*}
	\frac{1}{|K_i (\alpha)|} \biggl| \sum_{k_i \in K_i (\alpha)} e\left( h_i \frac{\alpha k_i}{2^{\rho}} \right) \biggr| \les \\
	\frac{1}{|K_i (\alpha)|} \biggl| \sum_{k_i \les B} e\left( h_i \frac{\alpha k_i}{2^{\rho}} \right) \biggl( \frac{1}{2^{\mu + \sigma}} + \frac{\Theta}{L_i} + \sum_{0 < |l_1| \les L_i} \frac{\Theta_{l_1}}{|l_1|} e\left( l_1 \frac{\alpha k_i}{2^{\lambda_i}} \right) \biggr) \cdot \\
	\biggl( \frac{1}{2^{\mu + \sigma}} + \frac{\Theta}{L_i} + \sum_{0 < |l_2| \les L_i} \frac{\Theta_{l_2}}{|l_2|} e\left( l_2 \frac{\gamma \alpha k_i}{2^{\lambda_i}} \right) \biggr) \biggr| \ll S_1 + S_2 + S_3 + S_4,
\end{multline*} where $\Theta$, $\Theta_{l_1}$, and $\Theta_{l_2}$ are complex numbers bounded by an absolute constant, and
\begin{gather*}
	S_1 := \frac{1}{|K_i (\alpha)|} \frac{1}{2^{2\mu + 2\sigma}} \biggl| \sum_{k_i \les B} e\left( h_i \frac{\alpha k_i}{2^{\rho}} \right) \biggr|, \\
	S_2 := \frac{1}{|K_i (\alpha)|} \sum_{k_i \les B} \frac{1}{L_i} \left( \frac{1}{2^{\mu + \sigma}} + \frac{1}{L_i} + \log L_i \right), \\
	S_3 := \frac{1}{|K_i (\alpha)|} \sum_{0 < |l_1| \les L_i} \frac{1}{|l_1|} \frac{1}{2^{\mu + \sigma}} \biggl| \sum_{k_i \les B} e\left( h_i \frac{\alpha k_i}{2^{\rho}} + l_1 \frac{\alpha k_i}{2^{\lambda_i}} \right) \biggr|,  \\
	S_4 := \frac{1}{|K_i (\alpha)|} \sum_{0 < |l_1|, |l_2| \les L_i} \frac{1}{|l_1| |l_2|}  \biggl| \sum_{k_i \les B} e\left( h_i \frac{\alpha k_i}{2^{\rho}} + l_1 \frac{\alpha k_i}{2^{\lambda_i}} + l_2 \frac{\gamma \alpha k_i}{2^{\lambda_i}} \right) \biggr|.
\end{gather*} The remaining sums can be omitted due to the symmetry. We denote the contributions to $|S_0|^{2^{m+2}}$ from $S_1, S_2, S_3, S_4$ by $F_{i,1}^{2^{m+2}}$, $F_{i,2}^{2^{m+2}}$, $F_{i,3}^{2^{m+2}}$, $F_{i,4}^{2^{m+2}}$ correspondingly. Since $\alpha \notin \mathcal{M}$, we have
$$
	\frac{1}{|K_i (\alpha)|} \les \frac{2^{2(\mu + \sigma)(1+o(1))}}{B}.
$$ Furthermore, the integral over $\alpha \in [D, 2D) \setminus \mathcal{M}$ can be trivially bounded by the integral over  $\alpha \in [D, 2D)$.
 
We start with estimating the contribution to $|S_0|^{2^{m+2}}$ from $S_1$. We have:
$$
	S_1 \ll \frac{2^{o(1)(\mu + \sigma)}}{B} \biggl| \sum_{k_i \les B} e\left( h_i \frac{\alpha k_i}{2^{\rho}} \right) \biggr|.
$$ Then by Lemma~\ref{expsums_estimates} and~\eqref{after-er-tur-kok} the contribution is at most
\begin{equation} \label{for_F_i_1}
	F_{i,1}^{2^{m+2}} :=
	(DN)^{2^{m+2}} \frac{2^{m + \rho + \mu + \sigma} \log H_i}{B} \log B \ll (DN)^{2^{m+2}} \frac{2^{m + \rho + \mu + \sigma}}{B} (\log N)^2.
\end{equation}

Next, for the contribution from $S_2$ we have
$$
	S_2 \ll \frac{1}{|K_i (\alpha)|} \sum_{k_i \les B} \frac{1}{L_i} \log L_i \ll \frac{2^{3\mu + 3\sigma} \log L_i}{L_i},
$$ and so
\begin{multline} \label{for_F_i_2}
	F_{i, 2}^{2^{m+2}} := (DN)^{2^{m+2}} 2^{m + \rho} \log H_i \frac{2^{3\mu + 3\sigma} \log L_i}{L_i} \ll \\
	(DN)^{2^{m+2}} \frac{2^{m + \rho + 3\mu + 3\sigma}}{L_i} (\log N)^2.
\end{multline} This is negligible as soon as $L_i$ is much larger than $2^{\rho + 3\mu + 3\sigma}$ (note that the parameter $m \approx \lambda / \mu$ is of constant size). Because of the error term coming from $S_3$, it is not possible to choose $L_i$ large enough when $i$ is close to $m$. This is the reason why we need to modify our approach to treat the last few sums over $k_i \in K_i (\alpha)$.

The contribution from $S_3$ is estimated similarly to $S_1$ as soon as we require
\begin{equation} \label{approx_phase}
	h_i \frac{\alpha k_i}{2^{\rho}} + l_1 \frac{\alpha k_i}{2^{\lambda_i}} \neq 0,
\end{equation} which is the case for all $|h_i| \les H_i$, $|l_1| \les L_i$ if
$$
	2^{(\lambda_i - \rho)(1 - \varepsilon)} \gg L_i \qquad \text{for} \qquad i \les m-l, \quad \varepsilon > 0.
$$ Recall that $\lambda_i = \lambda - (i-1)\mu$, \ $\rho = \lambda - m\mu$. So it is enough to have $L_i \ll 2^{l\mu}$. Note that from~\eqref{for_F_i_2} we need to require also $L_i \gg 2^{(\rho + 3\mu + 3\sigma)(1+\varepsilon)}$. This means that we should have at least $l \ges 4$. Then we easily get
\begin{equation} \label{for_F_i_3}
	F_{i,3}^{2^{m+2}} := 2^{\mu + \sigma} (\log N) F_{i,1}^{2^{m+2}}.
\end{equation}

Finally, the contribution from $S_4$ by Lemma~\ref{expsums_estimates} does not exceed
\begin{multline} \label{for_F_i_4}
	  N (DN)^{2^{m+2}-1} 2^{m + \rho} \frac{2^{3\mu + 3\sigma}}{B} \sum_{0 < |h_i| \les H_i} \frac{1}{|h_i|} \sum_{0 < |l_1|, |l_2| \les L_i} \frac{1}{|l_1| |l_2|} \cdot \\
	  \max \left( D \log N, \frac{2^{\lambda_i}}{|h_i 2^{\lambda_i - \rho} + l_1 + \gamma l_2|} \right)  \ll (DN)^{2^{m+2}} \frac{2^{ m + \rho + 3\mu + 3\sigma}}{B} (\log N)^4 =: F_{i,4}^{2^{m+2}}.
\end{multline} 

We finish this section by noting that the dual sums over $k_i' \in K_i (\alpha)$ for $i \les m-l$ can be estimated trivially because of 1-periodicity of $g_{\rho}$ and the fact that the discrepancy estimates for the sequences shifted by $-\alpha k_i'$ are the same for each fixed $k_i'$. Thus, we obtain
\begin{multline*}
	|S_0|^{2^{m+2}} \ll N (DN)^{2^{m+2}-1} \int_D^{2D} \max_{\beta \ges 0} \int_{[0, 1]^{m-l+3}} \frac{1}{|K_{m-l+1} (\alpha)|^2 \ldots |K_m (\alpha)|^2} \\
	\sum_{\substack{k_{m-l+1} \in K_{m-l+1} (\alpha) \\ k_{m-l+1}' \in K_{m-l+1} (\alpha)}} \ldots 
	\sum_{k_m, k_m' \in K_m (\alpha)}
	e\biggl( \frac{1}{2}  \sum_{\substack{\varepsilon', \varepsilon_0, \ldots, \varepsilon_m \\ \in \{0, 1\} }} g_{\rho} \bigl(x + \beta + \varepsilon' y + \varepsilon_0 x_0 + \varepsilon_1 x_1 + \ldots + \\ 
	\frac{\alpha \varepsilon_m (k_m - k_m')}{2^{\rho}} \bigr) \biggr) dx_0 dx_1 \ldots dx_{m-l} dx dy d\alpha + \sum_{j=1}^5 E_j^{2^{m+2}} + F_0^{2^{m+2}} + \sum_{i=1}^{m-l} \sum_{j=-1}^4 F_{i,j}^{2^{m+2}}.
\end{multline*}

\section{Proof of Proposition~\ref{prop3}: discrepancy for the remaining sums over $k_i \in K_i (\alpha)$}
\label{Koksma_on_k_last}
	
For values of $i$ close to $m$ (which correspond to small values of $2^{\lambda_i}$) we want to avoid dealing with ``type-$S_3$'' sum from the previous section, which is of the form
$$
	S_3 = \frac{1}{|K_i (\alpha)|} \sum_{0 < |l_1| \les L_i} \frac{1}{|l_1|} \frac{1}{2^{\mu + \sigma}} \biggl| \sum_{k_i \les B} e\left( h_i \frac{\alpha k_i}{2^{\rho}} + l_1 \frac{\alpha k_i}{2^{\lambda_i}} \right) \biggr|.
$$ This is caused by the restrictions on the size of $L_i$ arising from the estimate of ``type-$S_2$'' sum. It forces a large choice of $L_i$, which in the case, for example, $h_i = 1$, $l_1 = -2^{\lambda_i - \rho}$ leads to a sum with zero phase. One way to avoid this issue is to work with the restriction
$$
	\left\{  \frac{\alpha k}{2^{\lambda_i}} \right\} < \frac{1}{2^{\mu + \sigma}}
$$ separately from $\left\{ \alpha k / 2^{\rho} \right\}$. Note that for any integer $n$, the restriction $\{ 2^n x \} \in [a, b]$ implies
$$
	\{ x \} \in \bigcup_{j=0}^{2^n-1} \left[ \frac{a + j}{2^n}, \frac{b + j}{2^n} \right].
$$ In this way we can express the restriction on $\{ \alpha k / 2^{\rho}  \}$ in terms of the restriction on $\{ \alpha k / 2^{\lambda_i} \}$. Precisely, write
$$
	g_{\rho} (x) =: \tilde g_{\rho + (m-i+1)\mu} \left( \frac{x}{2^{\lambda_i - \rho}} \right)
$$ for $(1/2^{\lambda_i - \rho})$-periodic function $\tilde g_{\rho + (m-i+1)\mu}$. Further, let
$$
	\tilde f_{\rho + (m-i+1)\mu} (x) := e\biggl( \frac{1}{2} \sum_{\substack{\varepsilon', \varepsilon_0, \ldots, \varepsilon_m \\ \in \{ 0, 1\}}} \tilde g_{\rho + (m-i+1)\mu} (x) \biggr).
$$ Then
\begin{multline} \label{scaled_sum}
	\frac{1}{|K_i (\alpha)|} \sum_{k_i \in K_i (\alpha)} \tilde f_{\rho + (m-i+1)\mu} \left( \frac{\alpha k_i}{2^{\lambda_i}} \right) = \\ \frac{|\tilde K_i (\alpha)|}{|K_i (\alpha)|} \frac{1}{|\tilde K_i (\alpha)|} \sum_{k_i \in \tilde K_i (\alpha)} \tilde f_{\rho + (m-i+1)\mu} \left( \frac{\alpha k_i}{2^{\lambda_i}}  \right) \mathbbm{1}\left( \left\{ \frac{\alpha k_i}{2^{\lambda_i}} < \frac{1}{2^{\mu + \sigma}} \right\} \right),
\end{multline} where $\tilde K_i (\alpha)$ is the set of $k_i \les B$ with only one restriction $\{ \gamma \alpha k_i / 2^{\lambda_i} \} < 2^{-\mu -\sigma}$. Similarly to Section~\ref{Koksma_on_k_first} we can show that for most $\alpha \in [D, 2D)$ one has
$$
	|K_i (\alpha)| \approx \frac{B}{2^{2\mu + 2\sigma}}, \qquad
	|\tilde K_i (\alpha)| \approx \frac{B}{2^{\mu + \sigma}} 
$$ at the same time. By the union bound we can treat both of these conditions separately. Moreover, since now $i$ is large, the numbers $\lambda_i$ are small, so we have
$$
	\max \left(\log N, \frac{2^{\lambda_i}}{D|h_1|} \right) \ll \log N, \qquad \max \left(\log N, \frac{2^{\lambda_i}}{D|h_1 + \gamma h_2|} \right) \ll \log N.
$$ By repeating the steps from the preivous section, we bound the size of the exceptional set
\begin{multline*}
	\mathcal{M} := \text{meas} \biggl\{ D \les \alpha < 2D : |K_i (\alpha)| < \frac{B}{2^{2(\mu + \sigma)(1+o(1))}} \text{ \ or \ } \\
	|\tilde K_i (\alpha)| < \frac{B}{2^{(\mu + \sigma)(1+o(1))}} \text{ \ or \ } |\tilde K_i (\alpha)| > \frac{B}{2^{(\mu + \sigma)(1-o(1))}} \biggr\} 
\end{multline*} as
$$
	\mathcal{M} \ll D \frac{2^{2(\mu + \sigma)(1+o(1))}}{B} (\log N)^4.
$$ Evaluating the sum on the left side of~\eqref{scaled_sum} trivially for $\alpha \in \mathcal{M}$ we get for the contribution to $|S_0|^{2^{m+2}}$:
\begin{equation} \label{for_G_i_exc}
	G_{i,-1}^{2^{m+2}} := (DN)^{2m+2} \frac{2^{3\mu + 3\sigma}}{B} (\log N)^4.
\end{equation}

Not assume $\alpha \in [D, 2D) \setminus \mathcal{M}$. Applying Lemma~\ref{Koksma-Hlawka} to~\eqref{scaled_sum} we obtain
\begin{multline} \label{after_trick}
	2^{(\mu + \sigma)(1+o(1))} \biggl| \frac{1}{|\tilde K_i (\alpha)|} \sum_{k_i \in \tilde K_i (\alpha)} \tilde f_{\rho + (m-i+1)\mu} \left( \frac{\alpha k_i}{2^{\lambda_i}}  \right) \mathbbm{1}\left( \left\{ \frac{\alpha k_i}{2^{\lambda_i}} < \frac{1}{2^{\mu + \sigma}} \right\} \right) - \\ 
	\int_0^{1/2^{(\mu + \sigma)(1+o(1))}} \tilde f_{\rho + (m-i+1)\mu} (u) du \biggr| \les 2^{(\mu + \sigma)(1+o(1))} V_{HK}^{(1)} (\tilde f) D_{B, \tilde K_i} \left( \frac{\alpha k_i}{2^{\lambda_i}} \right) \ll \\
	2^{(\mu + \sigma)(1+o(1)) + m + \rho + l\mu} D_{B, \tilde K_i} \left( \frac{\alpha k_i}{2^{\lambda_i}} \right).
\end{multline}

The discrepancy $D_{B, \tilde K_i}$ is estimated similarly to the previous section. We denote the new error terms as $G_{i,j}$. By Lemma~\ref{Erdos-Turan-Koksma},
$$
	D_{B, \tilde K_i} \left( \frac{\alpha k_i}{2^{\lambda_i}} \right) \les \frac{1}{H_i} + \frac{1}{|\tilde K_i (\alpha)|} \sum_{0 < |h_i| \les H_i} \frac{1}{|h_i|} \biggl| \sum_{k_i \in \tilde K_i (\alpha)} e\left( h_i \frac{\alpha k_i}{2^{\lambda_i}} \right) \biggr|. 
$$ The first term on the right side of this inequality contributes to $|S_0|^{2^{m+2}}$ at most
\begin{equation} \label{for_G_i_0}
	G_{i,0}^{2^{m+2}} := (DN)^{2^{m+2}} \frac{2^{m + \rho + 2\sigma + (l+2) \mu}}{H_i}.
\end{equation} For the second term by Lemma~\ref{Vinogradov_cup} we have 
$$
	\frac{1}{|\tilde K_i (\alpha)|} \sum_{k_i \les B} e\left( h_i \frac{\alpha k_i}{2^{\lambda_i}}  \right) \biggl( \frac{1}{2^{\mu + \sigma}} + \frac{\Theta_{k_i}}{L_i} + \sum_{0 < |l_i| \les L_i} \frac{1}{|l_i|} e\left( l_i \frac{\gamma \alpha k_i}{2^{\lambda_i}} \right) \biggl) \ll \tilde S_1 + \tilde S_2 + \tilde S_4,
$$ where
\begin{gather*}
	\tilde S_1 := \frac{1}{|\tilde K_i (\alpha)|} \frac{1}{2^{\mu + \sigma}} \biggl| \sum_{k_i \les B} e\left( h_i \frac{\alpha k_i}{2^{\lambda_i}} \right) \biggr|, \qquad
	\tilde S_2 := \frac{1}{| \tilde K_i (\alpha)|} \sum_{k_i \les B} \frac{1}{L_i}, \\
	\tilde S_4 := \frac{1}{|\tilde K_i (\alpha)|} \sum_{0 < |l_i| \les L_i} \frac{1}{|l_i|} \biggl| \sum_{k_i \les B} e\left( h_i \frac{\alpha k_i}{2^{\lambda_i}} + l_i \frac{\gamma \alpha k_i}{2^{\lambda_i}} \right) \biggr|.
\end{gather*} Note that we do not have a ``type-$S_3$'' sum any more. The contributions from $\tilde S_1, \tilde S_2, \tilde S_4$ to $|S_0|^{2^{m+2}}$ can be evaluated similarly to $F_{i,1}^{2^{m+2}}$, $F_{i,2}^{2^{m+2}}$, $F_{i,4}^{2^{m+2}}$. One can easily verify the estimates  
\begin{gather} \label{for_G_i_1}
	G_{i,1}^{2^{m+2}} := (DN)^{2^{m+2}} \frac{2^{m + \rho + 3l(\mu + \sigma)}}{B} (\log N)^2,  \\ \label{for_G_i_2}
	G_{i,2}^{2^{m+2}} := (DN)^{2^{m+2}} \frac{2^{m + \rho + 3l(\mu + \sigma)}}{L_i} (\log N)^2, \\
	\label{for_G_i_4}
	G_{i,4}^{2^{m+2}} := (DN)^{2^{m+2}} \frac{2^{m + \rho + 3l(\mu + \sigma)}}{B} (\log N)^4.
\end{gather} Now we can choose $L_i$ to be very large compared to $2^{m + \rho + 3l(\mu+\sigma)}$.

Next, from~\eqref{after_trick}, the main term in the current bound for $|S_0|^{2^{m+2}}$ is of the form
\begin{multline*}
	N (DN)^{2^{m+2}-1} 2^{l(\mu + \sigma)} \int_D^{2D} \max_{\beta \ges 0} \int_{[0,1]^{m-l+3}} \int_{\left[ 0, \frac{1}{2^{\mu+\sigma}}\right]^l } e\biggl( \frac{1}{2} \sum_{\substack{\varepsilon', \varepsilon_0, \ldots, \varepsilon_m \\ \in \{ 0, 1\}}} g_{\rho} \bigl( x + \beta + \varepsilon'y + \varepsilon_0 x_0 + \\ 
	\varepsilon_1 x_1 + \ldots + \varepsilon_{m-l} x_{m-l} + \varepsilon_{m-l+1} 2^{\lambda_{m-l+1} - \rho} x_{m-l+1} + \ldots + \varepsilon_m 2^{\lambda_m - \rho} x_m \bigr) \biggr)
	dx dy dx_0 dx_1 \ldots dx_m.
\end{multline*} Changing the variables $y_i = 2^{\lambda_i - \rho} x_i$ for $i > m-l+1$, we write the last expression as
\begin{multline*}
	N (DN)^{2^{m+2}-1} \int_D^{2D} \max_{\beta \ges 0} \int_{[0, 1]^{m-l+3}} \biggl( \prod_{i=m-l+1}^m \frac{1}{2^{\lambda_i - \rho - \mu - \sigma}} \int_0^{2^{\lambda_i - \rho - \mu - \sigma}} \biggr) \\
	e\biggl( \frac{1}{2} \sum_{\substack{\varepsilon', \varepsilon_0, \ldots, \varepsilon_m \\ \in \{ 0, 1\}}} 
	g_{\rho} \bigl( x + \beta + \varepsilon'y + \varepsilon_0 x_0 + \ldots + \varepsilon_{m-l} x_{m-l} + \varepsilon_{m-l+1} y_{m-l+1} + \ldots + \varepsilon_m y_m \bigr) \biggr) \\
	dx dy dx_0 dx_1 \ldots dx_{m-l} dy_{m-l+1} \ldots dy_m.	 
\end{multline*} Note that for $i < m$ one clearly has $\lambda_i - \rho - \mu - \sigma > 0$ as soon as $\sigma < \mu$. This means that for $i < m$ we have
$$
	\frac{1}{2^{\lambda_i - \rho - \mu - \sigma}} \int_0^{2^{\lambda_i - \rho - \mu - \sigma}} e\bigl( \ldots \bigr) dy_i = \int_0^1 e\bigl( \ldots \bigr) dy_i.
$$ To treat the last integral corresponding to $i = m$ we apply Cauchy inequality:
\begin{multline*}
	|S_0|^{2^{m+3}} \ll N^2 (DN)^{2^{m+3}-2} \biggl( \int_D^{2D} \int_{[0, 1]^{m+1}} 2^{\sigma} \int_0^{2^{-\sigma}} 1 dy dx_0 dx_1 \ldots dy_m \biggr) \cdot \\
	\biggl( \int_D^{2D} \max_{\beta \ges 0} \int_{[0, 1]^{m+1}} 2^{\sigma} \int_0^{2^{-\sigma}} \biggl| \int_0^1 e\bigl( \ldots \bigr) dx \biggr|^2 dy dx_0 dx_1 \ldots dy_m \biggr) + \\ 
	\sum_{j=1}^5 E_j^{2^{m+3}} + F_0^{2^{m+3}} + \sum_{i=1}^{m-l} \sum_{j=-1}^4 F_{i,j}^{2^{m+3}} + \sum_{i = m-l+1}^m \sum_{\substack{j = -1 \\ j \neq 3}}^4 G_{i,j}^{2^{m+3}}.
\end{multline*} The integral over $[0, 2^{-\sigma}]$ in the second factor of the main term can be trivially bounded by the integral over $[0, 1]$. This gives
\begin{multline*}
	|S_0|^{2^{m+3}} \ll (DN)^{2^{m+3}} 2^{\sigma} \int_{[0,1]^{m+4}} e \biggl( \frac{1}{2} \sum_{\substack{\varepsilon', \varepsilon'', \varepsilon_0, \ldots, \varepsilon_m \\ \in \{ 0, 1\}}}  g_{\rho} \bigl( x + \varepsilon'y + \varepsilon''z + \varepsilon_0 x_0 + \ldots \\
	\ldots + \varepsilon_m y_m \bigr) \biggr) dx dy dz dx_0 \ldots dy_m + \sum_{j=1}^5 E_j^{2^{m+3}} + F_0^{2^{m+3}} + \sum_{i=1}^{m-l} \sum_{j=-1}^4 F_{i,j}^{2^{m+3}} + \\
	\sum_{i = m-l+1}^m \sum_{\substack{j = -1 \\ j \neq 3}}^4 G_{i,j}^{2^{m+3}}.
\end{multline*} The shift $+\beta$ can be removed since there is no dependency on $\alpha$ in the argument of $g_{\rho}$ any more.

\section{Proof of Proposition~\ref{prop3}: Gowers norm estimate}
\label{pf_Gowers_norm}

From the last inequality of Section~\ref{Koksma_on_k_last} we conclude
\begin{multline} \label{all_together}
	|S_0|^{2^{m+3}} \ll (DN)^{2^{m+3}} 2^{\sigma} I_{m+4} (g_{\rho}) + \\
	\sum_{j=1}^5 E_j^{2^{m+3}} + F_0^{2^{m+3}} + \sum_{i=1}^{m-l} \sum_{j=-1}^4 F_{i,j}^{2^{m+3}} +
	\sum_{i = m-l+1}^m \sum_{\substack{j = -1 \\ j \neq 3}}^4 G_{i,j}^{2^{m+3}},
\end{multline} where $I_{m+4}$ is the Gowers $(m+4)$-fold integral 
$$
	I_{m+4} (g_{\rho}) = \int_{[0, 1]^{m+4}} e\biggl( \frac{1}{2} \sum_{\bm{\varepsilon} \in \{0, 1\}^{m+4}} g_{\rho} \bigl( x + \langle \bm{\varepsilon}, \bm{y} \rangle \bigr) dx d\bm{y} \biggr).
$$ Alternatively, it can be seen as the continuous version of the Gowers $(m+4)$-uniformity norm of $e(g_{\rho}/2)$. 
The discrete version of the Gowers norm for this function is defined as
$$
	\norm{e(g_{\rho}/2)}_{U^m (\mathbb{T})}^{2^m} := \frac{1}{2^{(m+1) \rho}} \sum_{\substack{0 \les n < 2^{\rho} \\ 0 \les r_1, \ldots, r_m < 2^{\rho}}} \prod_{\bm{\varepsilon} \in \{0, 1\}^m } e \biggl( \frac{1}{2} g_{\rho} \bigl( \frac{n}{2^{\rho}} + \frac{\langle \bm{\varepsilon}, \bm{r} \rangle}{2^{\rho}}  \bigr)  \biggr).
$$ It was shown by Spiegelhofer~\cite[Proposition~3.3]{Spieg} (after Konieczny~\cite{Kon19}) that this norm decays rapidly with the number of digits $\rho \to +\infty$:

\begin{lemma} \label{Koniec}
	Let $m \ges 2$ be an integer. There exist some constants $\eta_1 > 0$ and $C$ $=$ $C(m)$ $>$ $0$ such that
	$$
		\norm{e(g_{\rho}/2)}_{U^m (\mathbb{T})}^{2^m} \les C 2^{-\eta_1 \rho }
	$$ for any $\rho \ges 0$. 
\end{lemma}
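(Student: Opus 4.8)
This is essentially Proposition~3.3 of~\cite{Spieg}, itself an adaptation of Konieczny's bound for the truncated Thue--Morse sequence~\cite{Kon19}; I only indicate the mechanism, since the same self-similarity underlies all the digit estimates used above.

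First I would rewrite the left-hand side as a cube average over $\Z/2^{\rho}\Z$. On the dyadic interval $[n2^{-\rho},(n+1)2^{-\rho})$ the function $e(g_{\rho}(x)/2)$ is constant, equal to $(-1)^{s_{\rho}(n)}$; hence, reading all arithmetic inside $s_{\rho}$ modulo $2^{\rho}$,
$$
	\norm{e(g_{\rho}/2)}_{U^m(\T)}^{2^m} = \frac{1}{2^{(m+1)\rho}} \sum_{\substack{0 \les n < 2^{\rho} \\ 0 \les r_1, \ldots, r_m < 2^{\rho}}} \prod_{\bm{\varepsilon} \in \{0,1\}^m} (-1)^{s_{\rho}(n + \langle \bm{\varepsilon}, \bm{r} \rangle)} =: G_m(\rho),
$$
and it suffices to show $G_m(\rho) \ll_m 2^{-\eta_1 \rho}$.

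The engine is the one-digit recursion $(-1)^{s_{\rho}(N)} = (-1)^{N \bmod 2}\,(-1)^{s_{\rho-1}(\floor{N/2})}$. Writing each of $n, r_1, \ldots, r_m$ as twice its higher part plus a bottom bit, the corner value $N_{\bm{\varepsilon}} = n + \langle \bm{\varepsilon}, \bm{r} \rangle$ becomes $2\bigl(n' + \langle \bm{\varepsilon}, \bm{r}' \rangle + c_{\bm{\varepsilon}}\bigr) + b_{\bm{\varepsilon}}$, where $b_{\bm{\varepsilon}} \in \{0,1\}$ and the carry $0 \les c_{\bm{\varepsilon}} \les m$ are explicit functions of the $m+1$ bottom bits and of $\bm{\varepsilon}$. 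Summing over the bottom bits first expresses $G_m(\rho)$ as a weighted average over the $2^{m+1}$ ``bottom types'': each type carries an explicit sign $\prod_{\bm{\varepsilon}}(-1)^{b_{\bm{\varepsilon}}}$ and a level-$(\rho-1)$ cube average of the same shape but with the higher arguments displaced by the carries $c_{\bm{\varepsilon}}$. When the carry pattern $\bm{\varepsilon} \mapsto c_{\bm{\varepsilon}}$ is affine, a (cyclic) change of variables in $n', \bm{r}'$ absorbs it and returns, up to an explicit sign, $G_m(\rho-1)$; for the remaining types $\bm{\varepsilon} \mapsto c_{\bm{\varepsilon}}$ is genuinely nonlinear over the cube, and there I would smooth the $O_m(1)$ displacement by one or two applications of the van der Corput inequality of Lemma~\ref{generalized_Corput}, at the cost of raising $\rho$ by a bounded amount and of introducing cube averages $G_{m'}(\cdot)$ of a bounded family of nearby orders $m' \les m + O_m(1)$. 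The outcome is a closed recursion $v_{\rho} \les M\, v_{\rho-1}$, where $v_{\rho}$ collects the relevant averages $G_{m'}(\rho)$ and $M = M(m)$ is a fixed nonnegative matrix.

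The crux — and the point where the arithmetic of $(-1)^{s(\cdot)}$ really enters — is that $M$ is strictly contracting: its spectral radius $\kappa = \kappa(m)$ satisfies $\kappa < 1$. Equivalently, after averaging over the bottom types the leading signs $\prod_{\bm{\varepsilon}}(-1)^{b_{\bm{\varepsilon}}}$ cannot conspire to restore the full mass $1$ of the average; Konieczny verifies this by an explicit evaluation of the relevant finite character sum, and Spiegelhofer additionally checks that passing to the $2^{\rho}$-periodic truncation $s_{\rho}$ is harmless, since $s_{\rho}$ depends only on residues modulo $2^{\rho}$ and the carry climbs past the top digit for only a negligible proportion of $(n, \bm{r})$. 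I expect this contraction step to be the main obstacle; the rest is bookkeeping. Granting it, iterating $v_{\rho} \les M^{\rho} v_0$ yields $G_m(\rho) \ll_m \rho^{O(1)} \kappa^{\rho}$, and choosing any $\eta_1 < \log_2(1/\kappa)$ and absorbing the bounded shift of $\rho$ into $C = C(m)$ gives $\norm{e(g_{\rho}/2)}_{U^m(\T)}^{2^m} \les C 2^{-\eta_1 \rho}$ for all $\rho \ges 0$, as claimed.
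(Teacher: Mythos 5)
The paper gives no proof of this lemma; it is stated as a black box, with a citation to Spiegelhofer (Proposition~3.3 of~\cite{Spieg}), itself building on Konieczny~\cite{Kon19}. You correctly identify those references, so on the level of ``what the paper actually does'' your treatment matches: both you and the paper defer to the cited result.

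Since you also offer a mechanism sketch, a brief comment on its accuracy is worth making. The bottom-bit decomposition $(-1)^{s_\rho(N)} = (-1)^{N \bmod 2}(-1)^{s_{\rho-1}(\floor{N/2})}$, the carry bookkeeping, and the ``finite linear recursion $v_\rho \les M v_{\rho-1}$ with spectral radius $\kappa(m) < 1$'' picture is indeed the right frame, and that contraction is verified in Konieczny's paper by an explicit evaluation of the finite character sum over bottom types — you correctly flag this as the crux. However, the van der Corput step you propose for the ``nonlinear carry types'' is not part of the actual argument and is a misstep: invoking Lemma~\ref{generalized_Corput} would, if anything, push the Gowers order in the wrong direction and does not plainly close the recursion. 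In Konieczny's proof the carry-induced displacements $c_{\bm\varepsilon}$ are instead absorbed by enlarging the vector $v_\rho$ to include cube averages with all $O_m(1)$ bounded integer offset patterns in the corners, and the recursion is then closed on that finite family directly; no smoothing device is needed. So: same approach as the paper (cite the result), and a broadly sensible but not fully accurate sketch of what lies under the citation.
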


We need an analogue of this lemma for the continuous case. We show that the integral version of the Gowers norm of $g_{\rho} / 2$ does not exceed the discrete one:

\begin{lemma} \label{Gowers_norm_estimate}
	Let $m \ges 2$ be an integer. Then
	$$
		\int_{[0, 1]} \int_{[0, 1]^m} e\biggl( \frac{1}{2} \sum_{\bm{\varepsilon} \in \{0, 1\}^m}  g_{\rho} \left( x + \langle \bm{\varepsilon}, \bm{y} \rangle \right) \biggr) dx d\bm{y} \les \norm{e(g_{\rho}/2)}_{U^m (\mathbb{T})}^{2^m}
	$$ for any $\rho \ges 0$.
\end{lemma}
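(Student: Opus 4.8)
The plan is to peel off from the continuous integral the contribution of the scale $2^{\rho}$ and then invoke the Gowers modification of the Cauchy--Schwarz inequality on the finite cyclic group $\Z/2^{\rho}\Z$. Write $\phi := e(g_{\rho}/2)$. Since $g_{\rho}$ is integer-valued and, by construction, constant on every dyadic interval $[\,j\,2^{-\rho},(j+1)\,2^{-\rho})$, the function $\phi$ is a $1$-periodic step function taking values in $\{-1,1\}$; in particular it is \emph{real}. Define $\psi\colon\Z/2^{\rho}\Z\to\{-1,1\}$ by $\psi(j):=e\bigl(\tfrac12 g_{\rho}(j/2^{\rho})\bigr)$ and regard it as a $2^{\rho}$-periodic function on $\Z$. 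With this notation the right-hand side of the lemma is exactly
\[
	\norm{e(g_{\rho}/2)}_{U^m(\T)}^{2^m}=\frac{1}{2^{(m+1)\rho}}\sum_{\substack{0\les n<2^{\rho}\\ 0\les r_1,\ldots,r_m<2^{\rho}}}\prod_{\bm{\varepsilon}\in\{0,1\}^m}\psi\bigl(n+\langle\bm{\varepsilon},\bm{r}\rangle\bigr),
\]
i.e.\ the discrete Gowers $U^m$-inner product of $\psi$ with itself over $\Z/2^{\rho}\Z$.

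First I would substitute $x=(n+u)/2^{\rho}$ with $n\in\{0,\ldots,2^{\rho}-1\}$, $u\in[0,1)$, and $y_i=(r_i+v_i)/2^{\rho}$ with $r_i\in\{0,\ldots,2^{\rho}-1\}$, $v_i\in[0,1)$. Since $n+\langle\bm{\varepsilon},\bm{r}\rangle$ is an integer, $\floor{2^{\rho}\bigl(x+\langle\bm{\varepsilon},\bm{y}\rangle\bigr)}=n+\langle\bm{\varepsilon},\bm{r}\rangle+c_{\bm{\varepsilon}}$ with the integer ``carry'' $c_{\bm{\varepsilon}}=c_{\bm{\varepsilon}}(u,\bm{v}):=\floor{u+\langle\bm{\varepsilon},\bm{v}\rangle}$, and hence, by $1$-periodicity of $g_{\rho}$, $\phi\bigl(x+\langle\bm{\varepsilon},\bm{y}\rangle\bigr)=\psi\bigl(n+\langle\bm{\varepsilon},\bm{r}\rangle+c_{\bm{\varepsilon}}(u,\bm{v})\bigr)$. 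Performing the change of variables, the left-hand side of the lemma becomes
\[
	\frac{1}{2^{(m+1)\rho}}\int_{[0,1)^{m+1}}\Biggl(\sum_{\substack{0\les n<2^{\rho}\\ 0\les r_1,\ldots,r_m<2^{\rho}}}\prod_{\bm{\varepsilon}\in\{0,1\}^m}\psi\bigl(n+\langle\bm{\varepsilon},\bm{r}\rangle+c_{\bm{\varepsilon}}(u,\bm{v})\bigr)\Biggr)\,du\,d\bm{v}.
\]
The key point is that, for each fixed $(u,\bm{v})$, the carries $c_{\bm{\varepsilon}}$ are integers \emph{not depending on $(n,\bm{r})$}, so the inner sum is the Gowers $U^m$-inner product over $G:=\Z/2^{\rho}\Z$ of the translated functions $f_{\bm{\varepsilon}}:=\psi(\,\cdot\,+c_{\bm{\varepsilon}})$.

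I would then apply \cite[Lemma~3.8]{Gow01} on $G$: for arbitrary functions $(f_{\bm{\varepsilon}})_{\bm{\varepsilon}\in\{0,1\}^m}$ on $G$,
\[
	\Biggl|\frac{1}{|G|^{m+1}}\sum_{x,h_1,\ldots,h_m\in G}\prod_{\bm{\varepsilon}\in\{0,1\}^m}\mathcal{C}^{\varepsilon_1+\cdots+\varepsilon_m}f_{\bm{\varepsilon}}\bigl(x+\langle\bm{\varepsilon},\bm{h}\rangle\bigr)\Biggr|\les\prod_{\bm{\varepsilon}\in\{0,1\}^m}\norm{f_{\bm{\varepsilon}}}_{U^m(G)}.
\]
Since $\psi$, hence every $f_{\bm{\varepsilon}}$, is real-valued, the conjugations $\mathcal{C}$ may be dropped; and since the Gowers norm is translation invariant, $\norm{f_{\bm{\varepsilon}}}_{U^m(G)}=\norm{\psi}_{U^m(G)}$ for all $\bm{\varepsilon}$. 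Therefore, for every $(u,\bm{v})$,
\[
	\Biggl|\sum_{\substack{0\les n<2^{\rho}\\ 0\les r_1,\ldots,r_m<2^{\rho}}}\prod_{\bm{\varepsilon}\in\{0,1\}^m}\psi\bigl(n+\langle\bm{\varepsilon},\bm{r}\rangle+c_{\bm{\varepsilon}}(u,\bm{v})\bigr)\Biggr|\les 2^{(m+1)\rho}\,\norm{\psi}_{U^m(G)}^{2^m}=2^{(m+1)\rho}\,\norm{e(g_{\rho}/2)}_{U^m(\T)}^{2^m}.
\]
Plugging this into the integral above via $\int F\les\int|F|$ and integrating the resulting constant over $[0,1)^{m+1}$ yields precisely $\norm{e(g_{\rho}/2)}_{U^m(\T)}^{2^m}$, which is the assertion. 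The step I would be most careful about is this reduction: the carry $\floor{u+\langle\bm{\varepsilon},\bm{v}\rangle}$ destroys the multilinear-in-$\bm{\varepsilon}$ structure of the phase, but for fixed $(u,\bm{v})$ it is only a translation of $\psi$, so that Gowers--Cauchy--Schwarz combined with shift-invariance of the $U^m$-norm absorbs it at no cost; the remaining manipulations --- the dyadic change of variables and the triangle inequality for integrals --- are routine.
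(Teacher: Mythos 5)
Your proof is correct and takes essentially the same route as the paper's: you both reduce the integral to a discrete Gowers inner product on $\Z/2^{\rho}\Z$ with integer shifts (the ``carries'') in the arguments, apply the Gowers--Cauchy--Schwarz inequality \cite[Lemma~3.8]{Gow01}, and invoke translation invariance of the $U^m$-norm. The only difference is presentational --- the paper partitions each dyadic hypercube into pieces $V_1,\ldots,V_R$ on which the carries are constant and then sums $\sum_i |V_i|(\cdots)$, whereas you change variables to $(n,u),(r_i,v_i)$ and apply the bound pointwise in $(u,\bm{v})$ before integrating, which amounts to the same decomposition since the $V_i$ are exactly the level sets of $(u,\bm{v})\mapsto(\floor{u+\langle\bm{\varepsilon},\bm{v}\rangle})_{\bm{\varepsilon}}$.
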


\begin{proof}
	Let us denote as usual
	$$
		f_{\rho}(x) := e\bigl( \frac{1}{2} g_{\rho} (x) \bigr).
	$$
	
	First, we split the integral to a sum of integrals over small hypercubes in the following way:
	$$
		\int_{[0, 1]} \int_{[0, 1]^m} e\bigl( \ldots \bigr) = \sum_{0 \les n, \bm{r} < 2^{\rho}} \int_{\left[ \frac{n}{2^{\rho}}, \frac{n+1}{2^{\rho}}  \right]} \int \ldots \int_{\left[ \frac{r_i}{2^{\rho}}, \frac{r_{i+1}}{2^{\rho}}  \right]} e\bigl( \ldots \bigr).
	$$ Next, each of $2^{(m+1)\rho}$ small hypercubes can be further partitioned as
	$$
		\left[ \frac{n}{2^{\rho}}, \frac{n+1}{2^{\rho}} \right] \times \left[ \frac{r_1}{2^{\rho}}, \frac{r_1+1}{2^{\rho}} \right] \times \ldots \times \left[ \frac{r_m}{2^{\rho}}, \frac{r_{m+1}}{2^{\rho}} \right] = V_1 \cup \ldots \cup V_R,
	$$ where
	$$
		\sum_{i=1}^R |V_i| = \frac{1}{2^{(m+1)\rho}},
	$$ and on each subset $V_i$ the integrand can be uniquely written as
	$$
		\prod_{\bm{\varepsilon} \in \{0, 1\}^m} f_{\rho} \biggl( \frac{n}{2^{\rho}} + \frac{\langle \bm{\varepsilon}, \bm{r} \rangle + t_{\bm{\varepsilon}}^{(V_i)}}{2^{\rho}} \biggr) \qquad \text{for some integer } t_{\bm{\varepsilon}}^{(V_i)} > 0.
	$$ Then
	\begin{multline*}
		I_m (g_{\rho}) = \sum_{0 \les n, \bm{r} < 2^{\rho}} \sum_{i=1}^R \int_{V_i} \prod_{\bm{\varepsilon} \in \{0, 1\}^m} f_{\rho} \biggl( \frac{n}{2^{\rho}} + \frac{\langle \bm{\varepsilon}, \bm{r} \rangle + t_{\bm{\varepsilon}}^{(V_i)}}{2^{\rho}} \biggr) dx d\bm{y} = \\
		\sum_{i=1}^R |V_i| \sum_{0 \les n, \bm{r} < 2^{\rho}} \prod_{\bm{\varepsilon} \in \{0, 1\}^m} f_{\rho} \biggl( \frac{n}{2^{\rho}} + \frac{\langle \bm{\varepsilon}, \bm{r} \rangle + t_{\bm{\varepsilon}}^{(V_i)}}{2^{\rho}} \biggr).
	\end{multline*} The last expression can be seen as a multidimensional Gowers inner product of $2^m$ functions. Applying the Gowers modification of the Cauchy inequality (see~\cite[Lemma~3.8]{Gow01}), we obtain
	$$
		I_m (g_{\rho}) \les \sum_{i=1}^R |V_i| \prod_{\bm{\varepsilon} \in \{0, 1\}^m} \biggl| \sum_{0 \les n, \bm{r} < 2^{\rho}} \prod_{\bm{\eta} \in \{ 0, 1 \}^m } f_{\rho} \biggl( \frac{n}{2^{\rho}} + \frac{\langle \bm{\eta}, \bm{r} \rangle + t_{\bm{\varepsilon}}^{(V_i)}}{2^{\rho}} \biggr) \biggr|^{1/2^m}.
	$$ Next, notice that since the product over the vectors $\bm{\varepsilon}$ is now outside, we get the discrete Gowers norm of the shifted version of $f_{\rho}$, which is clearly the same as for the original function $f_{\rho}$. This implies
	\begin{multline*}
		I_m (g_{\rho}) \les
		\sum_{i=1}^R |V_i| \prod_{\bm{\varepsilon} \in \{0, 1\}^m} \left( 2^{(m+1)\rho} \right)^{1/2^m} \cdot \\  \biggl| \frac{1}{2^{(m+1)\rho}} \sum_{0 \les n, \bm{r} < 2^{\rho}} \prod_{\bm{\eta} \in \{ 0, 1 \}^m } f_{\rho} \biggl( \frac{n}{2^{\rho}} + \frac{\langle \bm{\eta},  \bm{r} \rangle + t_{\bm{\varepsilon}}^{(V_i)}}{2^{\rho}} \biggr) \biggr|^{1/2^m} =
		\norm{f_{\rho}}_{U^m}^{2^m}
	\end{multline*} as desired.
	
\end{proof}

Applying Lemmas~\ref{Gowers_norm_estimate} and~\ref{Koniec} to the main term in~\eqref{all_together}, we find
\begin{multline} \label{all_together_2}
	|S_0|^{2^{m+3}} \ll (DN)^{2^{m+3}} 2^{\sigma - \eta_1 \rho} + \\
	\sum_{j=1}^5 E_j^{2^{m+3}} + F_0^{2^{m+3}} + \sum_{i=1}^{m-l} \sum_{j=-1}^4 F_{i,j}^{2^{m+3}} +
	\sum_{i = m-l+1}^m \sum_{\substack{j = -1 \\ j \neq 3}}^4 G_{i,j}^{2^{m+3}}.
\end{multline}

\section{Proof of Proposition~\ref{prop3}: adjustment of the parameters}
\label{final_comp}

First, let us recall the bounds for all the error terms from~\eqref{all_together_2}. They are given by formulas~\eqref{for_E_1}, \eqref{for_E_2}, \eqref{for_E_3}, \eqref{for_E_4}, \eqref{for_E_5} (for $E_j$'s); \eqref{for_F_0}, \eqref{for_F_i_0}, \eqref{for_F_i_exc}, \eqref{for_F_i_1}, \eqref{for_F_i_2}, \eqref{for_F_i_3}, \eqref{for_F_i_4} (for $F_{i,j}$'s); \eqref{for_G_i_exc}, \eqref{for_G_i_0}, \eqref{for_G_i_1}, \eqref{for_G_i_2}, \eqref{for_G_i_4} (for $G_{i,j}$'s). Assuming that all of them are $\ll (DN)^{m+3}$, we get the following estimates for $2^{m+3}$-powers: \\

1) for $E_j$-terms:
\begin{gather*}
	E_1^{2^{m+3}} \ll (DN)^{2^{m+3}} \left(\frac{1}{R_0} + \frac{R_0 D}{2^{\lambda}} + \frac{R_0}{N} \right), \qquad
	E_2^{2^{m+3}} \ll (DN)^{2^{m+3}} \frac{B}{N}, \\
	E_3^{2^{m+3}} \ll 2^m (DN)^{2^{m+3}} \left( \frac{1}{2^{\sigma-2}} + \frac{(\log N)^2}{N} \right), \qquad
	E_4^{2^{m+3}} \ll (DN)^{2^{m+3}} \frac{2^{m + \rho}}{H}, \\
	E_5^{2^{m+3}} \ll (DN)^{2^{m+3}} \frac{2^{m + \rho} (\log N)^3}{N};
\end{gather*} 

2) for $F_{i,j}$-terms:
\begin{gather*}
	F_0^{2^{m+3}} \ll (DN)^{2^{m+3}} \frac{2^{m + \rho}}{R_0} (\log N)^2, \qquad
	F_{i,0}^{2^{m+3}} \ll (DN)^{2^{m+3}} \frac{2^{m + \rho}}{H_i}, \\
	F_{i,1}^{2^{m+3}} \ll (DN)^{2^{m+3}} \frac{2^{m + \rho + \mu + \sigma}}{B} (\log N)^2, \qquad
	F_{i,2}^{2^{m+3}} \ll (DN)^{2^{m+3}} \frac{2^{m + \rho + 3\mu + 3\sigma}}{L_i} (\log N)^2, \\
	F_{i,3}^{2^{m+3}} \ll (DN)^{2^{m+3}} \frac{2^{m + \rho + 2\mu + 2\sigma}}{B} (\log N)^3, \qquad
	F_{i,4}^{2^{m+3}} \ll (DN)^{2^{m+3}} \frac{2^{3\mu + 3\sigma + m + \rho}}{B} (\log N)^4, \\
	F_{i,-1}^{2^{m+2}} \ll
	(DN)^{2^{m+2}} 2^{m + \rho} \frac{2^{2(\mu + \sigma)}}{B} \frac{N^{1/2} 2^{\lambda}}{D} (\log N)^3, \qquad
	1 \les i \les m-l;
\end{gather*} 

3) for $G_{i,j}$-terms:
\begin{gather*}
	G_{i,0}^{2^{m+3}} \ll (DN)^{2^{m+3}} \frac{2^{m + \rho + 2\sigma + (l+2)\mu}}{H_i}, \qquad
	G_{i,1}^{2^{m+3}} \ll (DN)^{2^{m+3}} \frac{2^{m + \rho + 3l(\mu + \sigma)}}{B} (\log N)^2,  \\
	G_{i,2}^{2^{m+3}} \ll (DN)^{2^{m+3}} \frac{2^{m + \rho + 3l(\mu + \sigma)}}{L_i} (\log N)^2, \qquad
	G_{i,4}^{2^{m+3}} \ll (DN)^{2^{m+3}} \frac{2^{m + \rho + 3l(\mu + \sigma)}}{B} (\log N)^4, \\
	G_{i,-1}^{2^{m+2}} \ll (DN)^{2m+2} \frac{2^{3\mu + 3\sigma}}{B} (\log N)^4, \qquad
	m-l+1 \les i \les m.
\end{gather*}

Furthermore, throughout the computation we obtained the following restrictions: $H_i \ll N$ and $H, L_i \ll N^{\theta_1}$ for all $1 \les i \les m$; \ $2^{\rho + 4\mu + 4\sigma} \ll L_i \ll 2^{l\mu}$ for $1 \les i \les m-l$. Note that the implied constants in all the estimates above can depend on $m$, which is roughly of size $\lambda / \mu$. The last fraction will only depend on $c$ in Theorem~\ref{thm1}. The desired bound would follow from the choice of the parameters satisfying the aforementioned conditions together with
\begin{gather*}
	R_0, B, H_i \ll N^{1-\kappa_1} \quad \text{and} \quad 
	R_0, B, H_i, H \gg \max \left( 2^{10 \rho}, 2^{10l(\mu + \sigma)} \right) \quad \text{for } 1 \les i \les m, \\
	\eta_1 \rho \ges 10 \sigma, \qquad \mu \ges 10 \rho, \\
	2^{\sigma}, 2^{\mu} \gg N^{\kappa_2}, \qquad
	R_0 D \ll 2^{\lambda} N^{-\kappa_3}, \qquad
	BD \gg 2^{\lambda + \rho + 2\mu + \sigma} N^{1/2 + \kappa_4}, \\
	\max \left( 2^{10 \rho}, 2^{10l(\mu + \sigma)} \right) \ll L_i \ll N^{1-\kappa_5} \qquad \text{for } m-l < i \les m,
\end{gather*} for some $\kappa_1, \kappa_2, \kappa_3, \kappa_4, \kappa_5 > 0$. All the restrictions are satisfied, for example, with the following choice:
\begin{gather*}
	l = 10, \qquad \mu = \floor{\frac{\theta_1 \log_2 N}{200}}, \qquad \rho = \floor{\frac{\mu}{10}}, \qquad \sigma = \floor{\frac{\eta_1 \rho}{10}}, \\
	\lambda = \floor{\log_2 DN^{1/5}}, \qquad R_0 = N^{1/10}, \qquad H = N^{9\theta_1/ 10}, \\ 
	B = H_i = N^{4/5} \quad \text{for } 1 \les i \les m,  \\
	L_i	= N^{9\theta_1 / 10} \qquad \text{for } m-l < i \les m, \\
	L_i = 2^{5\mu} \qquad \text{for } 1 \les i \les m-l,
\end{gather*} and the corresponding $\kappa_1, \kappa_2, \kappa_3, \kappa_4, \kappa_5, m > 0$. This completes the proof.

\nocite{*}
\bibliographystyle{abbrv}
\bibliography{Sarnak_TM_bib}

\end{document}